 \documentclass[draft]{article}

\usepackage{amsmath,amsfonts,amsthm,amssymb,amscd,cancel,color,mathtools}
\usepackage{enumitem}
\usepackage{verbatim}
\usepackage{ulem,color}

\setlength{\textheight}{8in}
\setlength{\oddsidemargin}{-0.1in}
\setlength{\textwidth}{6in}
\setlength{\parindent}{0.75cm}

\binoppenalty=9999 \relpenalty=9999

\renewcommand{\Re}{\mathop{\rm Re}\nolimits}
\renewcommand{\Im}{\mathop{\rm Im}\nolimits}

\def\uno{{\kern+.3em {\rm 1} \kern -.22em {\rm l}}}

\theoremstyle{plain} \newtheorem{theorem}{Theorem}[section]
\newtheorem{lemma}[theorem]{Lemma}
\newtheorem{proposition}[theorem]{Proposition}
 \theoremstyle{definition}
\newtheorem{definition}[theorem]{Definition} \theoremstyle{remark}
\newtheorem{remark}[theorem]{Remark}

\newtheorem{claim}[theorem]{Claim}

\newcommand{\R}{{\mathbb R}}

\def\im{{\rm i}}

\newcommand{\C}{\mathbb{C}}

\def\({\left(}
\def\){\right)}
\def\<{\left\langle}
\def\>{\right\rangle}

\def\[{\left [}
\def\]{\right ]}



\numberwithin{equation}{section}

\setcounter{section}{0}
\begin{document}

\title{On stability  of small solitons of the 1--D NLS with a  trapping delta potential}

\author {Scipio Cuccagna, Masaya Maeda}

\maketitle

\begin{abstract} We consider a  Nonlinear Schr\"odinger Equation with a very general non linear term and  with a trapping $\delta $ potential on the line. We then discuss    the asymptotic behavior of all its small solutions, generalizing a recent result by Masaki \textit{et al.} \cite{MMS1}. We give also a result of dispersion in the case of defocusing   equations
with a non--trapping delta potential.

\end{abstract}

\section{Introduction}
\label{sec:introduction}

In this paper we consider the   Nonlinear Schr\"odinger Equation (NLS)
\begin{equation}\label{1}
\im \dot u  =    H_1 u  + g(|u|^2)  u ,\quad (t,x)\in \R\times \R ,\text{with $u(0)=u_0 \in H^1 (\R , \C )$},
\end{equation}
with the Schr\" odinger operator (here $\delta (x) $ is the Dirac $\delta$ centered in 0)
\begin{align}\label{schop}
 & H_q= -\partial ^2_x  -q\delta (x)    \text{  for }  q\in \R \backslash \{ 0 \}
\end{align}
defined by  $H_q:= -\partial ^2_x   $   with domain
\begin{align}\label{domain}
 &   D(H_q) =  \{u\in H^1(\R , \C )\cap H^2(\R\setminus\{0\}, \C )\ |\ \partial_xu(0^+)-\partial _xu(0^-)=-qu(0)\} .
\end{align}
For the nonlinearity, we assume $g\in C([0,\infty),\R)\cap C^3((0,\infty),\R)$ and that there exist  $p>0$  and $C>0$ s.t. for $k=0,1,2,3$ we have
\begin{align}\label{nonlinearity}
 |g^{(k)}(s)| \leq C |s|^{p-k}  \text{    for all $s\in(0,1]$.}
\end{align}
In particular, we have $g(0)=0$ and  the primitive $G$ of $g$ defined by
\begin{equation}\label{eq:G}
    G'(s) = g (s) \text{  and }G (0) =0.
\end{equation}
 satisfies $|G(s)|\lesssim |s|^{p+1}$ for all $s\in(0,1)$.
\begin{remark}
A typical example we have in mind is $g(s)=\lambda s^p$ with $p>0 $ and $\lambda\in \{\pm 1\}$.
In this case, our NLS   can be written taking $q=1$ in the form
\begin{equation}\label{1prime}
\im \dot u  =    H_q u  + \lambda |u|^{2p}  u ,
\end{equation}
which was considered by Masaki \textit{et  al.} \cite{MMS1} for the case $p\geq 2$. They also
considered the cubic NLS for the cubic NLS,   $p=1$, with dispersive potential $q<0$
in \cite{MMS2},     where they proved dispersion, that is $\| u(t) \| _{L^\infty (\R )}\lesssim t^{-\frac{1}{2}}$ as $t\to +\infty$,  for appropriate very small solutions.
\end{remark}

We recall, see  \cite{MMS1}, that the operator in \eqref {schop}  for $q>0$ satisfies
 \begin{align}\label{schopSPec}
\text{$\sigma _d(H_q)=\{    - q^2/4\}$
 with $\ker\(  H_q+ q^2/4 \) =\text{Sp} (  \varphi _q   )$ where $ \varphi _q :=  \sqrt{q/2}  e^{-\frac{q}{2} |x|}$ ,}
\end{align}
with $\text{Sp} (  \varphi _q   ):=\C \varphi _q$.
Furthermore the point 0 is neither an eigenvalue nor a resonance for $H_q$, that is to say,
the only $u_0\in L^2 (\R ) \cup L^\infty (\R ) $ s.t. $ H_q u_0=0 $ is $u_0=0$.

\noindent We also have a spectral (orthogonal) decomposition
\begin{align}\label{schopSPec1}
L^2(\R ) = \text{Sp} (  \varphi _q   )\oplus L^2_c(H_q)
\end{align}
with $L^2_c(H_q)$ the  continuous spectrum component  associated
to $H_q$. We will consider the case $q=1$ and denote
\begin{align*}
L^2_c:=L^2_c(H_1)\text{ and }\varphi:=\varphi_1.
\end{align*}
We will denote by $P_c$ the projection  onto $L^2_c$.
In particular,
\begin{align*}
P_cu:=u-\(\int_\R u\varphi\,dx\) \varphi =u -\<u,\varphi\>\varphi-\<u,\im \varphi\>\im\varphi,
\end{align*}
where
 \begin{equation}\label{eq:bilf}
    \langle f , g \rangle = \Re \int _{\R  } f(x) \overline{g}(x) dx \text{  for  $f,g:\R \to \C$ }.
 \end{equation}
 We will also use the following notation.
\begin{itemize}
  \item Given a Banach space $X$, $v\in X$ and $\delta>0$ we set
$
D_X(v,\delta):=\{ x\in X\ |\ \|v-x\|_X<\delta\}.
$
\item
For $\gamma\in \R$ we set
\begin{align}
L^2_\gamma&:=\{u\in \mathcal S'(\R;\C)\ |\ \|u\|_{L^2_\gamma}:=\|e^{\gamma|x|}u\|_{L^2}<\infty\},\label{L2g}\\
H^1_\gamma&:=\{u\in \mathcal S'(\R;\C)\ |\ \|u\|_{H^1_\gamma}:=\|e^{\gamma|x|}u\|_{H^1}<\infty\}.\label{H1g}
\end{align}
\item
For $f:\C\to X$ for some Banach space $X$, we set $D_1 f =\partial_{\Re z} f$ and $D_2 f=\partial_{\Im z}f$.
\end{itemize}

\noindent  The eigenvalue of $H_1$ yields by bifurcation   a family of standing waves solutions.
%
%
%

As in \cite{GNT,CM15APDE,MMS1},  we have the following, which we prove in the appendix.
\begin{proposition}[Bound states]\label{prop:bddst} Let $p>0$.
Then there exists $\gamma_0>0$, $a_0>0$ and $C>0$ s.t. there exists a unique $Q\in C^1(D_\C(0,a_0);H^1_{\gamma_0})$ satisfying the gauge property
\begin{align}\label{Q:gauge}
Q[e^{\im \theta}z]=e^{\im \theta}Q[z],
\end{align}
s.t. there exists $E\in C([0,a_0^2),\R)$ s.t.
\begin{align}\label{eq:sp}
H_1 Q[z]+g(|Q[z]|^2)Q[z]=E(|z|^2)Q[z],
\end{align}
and
\begin{align}\label{prop:bddst1}
\|Q[z]-z\varphi\|_{H^1_{\gamma_0}}\leq C |z|^{ 2p   +1},\ \|D_jQ[z]-\im^{j-1}\varphi\|_{H^1_\gamma}\leq C |z|^{ 2p  },\
\left|E(|z|^2)+\frac14\right|\leq C |z|^{ 2p   }.
\end{align}
Moreover,
if $2p>1$  we have
\begin{align}\label{prop:bddst3}
Q[z] \in C^2 \( D_\C ( 0, a _0 ) , H^1_{\gamma_0} \) ,
\end{align}
and
\begin{align}\label{prop:bddst4}
\| D_jD_kQ[z] \|_{H^1_\gamma}\leq C |z|^{2p-1},\quad j,k=1,2.
\end{align}
\end{proposition}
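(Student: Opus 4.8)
The plan is a Lyapunov--Schmidt reduction executed as a contraction mapping, run first for a real parameter and then promoted to complex $z$ via the gauge symmetry. Fix once and for all $\gamma_0\in(0,1/2)$. Two spectral facts are used: since $\sigma(H_1|_{L^2_c})=[0,\infty)$, for $E$ in a small neighbourhood of $-1/4$ the operator $(H_1-E)|_{L^2_c}$ is invertible, with integral kernel decaying like $e^{-\sqrt{-E}\,|x-y|}$, so that $(H_1-E)^{-1}\colon L^2_{\gamma_0}\to H^1_{\gamma_0}$ is bounded uniformly (as $\sqrt{-E}>\gamma_0$), and $P_c$ is bounded on $L^2_{\gamma_0}$ because $\varphi$ decays faster than $e^{-\gamma_0|x|}$. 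Because $H_1$ commutes with the gauge action and $g(|e^{\im\theta}v|^2)e^{\im\theta}v=e^{\im\theta}g(|v|^2)v$, it is enough to build, for $s\in[0,a_0)$, a real-valued $Q[s]=s\varphi+\eta_s$ with $\eta_s$ real, $\eta_s\in H^1_{\gamma_0}\cap L^2_c$, solving $H_1Q[s]+g(Q[s]^2)Q[s]=E(s^2)Q[s]$, and then to put $Q[z]:=e^{\im\arg z}Q[|z|]$ for $z\ne0$ and $Q[0]:=0$; then \eqref{Q:gauge} holds by construction and \eqref{eq:sp} by substitution, with $E$ a function of $|z|^2$. Writing $h(r):=g(r^2)r$, one has $h\in C^1(\R)$, $h(0)=0$, $|h^{(k)}(r)|\lesssim|r|^{2p+1-k}$ for $k=0,1$ on $[-1,1]$, and $h\in C^2(\R)$ precisely when $2p>1$ (since then $|h''(r)|\lesssim|r|^{2p-1}\to0$ as $r\to0$) --- this is the origin of the restriction in \eqref{prop:bddst3}--\eqref{prop:bddst4}.

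Inserting $Q=s\varphi+\eta$ into \eqref{eq:sp}, using $H_1\varphi=-\tfrac14\varphi$ and projecting with $P_c$ and with $\langle\,\cdot\,,\varphi\rangle$ yields the equivalent system
\begin{align}\label{plan:sys}
\eta=-(H_1-E)^{-1}P_c\bigl[h(s\varphi+\eta)\bigr],\qquad (E+\tfrac14)\,s=\bigl\langle h(s\varphi+\eta),\varphi\bigr\rangle.
\end{align}
For $s>0$ the second equation defines $E=E(s,\eta):=-\tfrac14+s^{-1}\langle h(s\varphi+\eta),\varphi\rangle$; substituting into the first reduces the problem to a fixed point $\eta=\Psi_s(\eta):=-(H_1-E(s,\eta))^{-1}P_c[h(s\varphi+\eta)]$ (at $s=0$ one reads $E\equiv-\tfrac14$, $\Psi_0(\eta)=-(H_1+\tfrac14)^{-1}P_c[h(\eta)]$). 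Since $\varphi$ and every $\eta\in H^1_{\gamma_0}$ decay at least like $e^{-\gamma_0|x|}$ (weighted Sobolev embedding in one dimension), the bounds $|h(u)|\lesssim|u|^{2p+1}$, $|h'(u)|\lesssim|u|^{2p}$ give, on the ball $B_s=\{\|\eta\|_{H^1_{\gamma_0}}\le C_*s^{2p+1}\}$, that $\|h(s\varphi+\eta)\|_{L^2_{\gamma_0}}\lesssim s^{2p+1}$ and $\|h(s\varphi+\eta_1)-h(s\varphi+\eta_2)\|_{L^2_{\gamma_0}}\lesssim s^{2p}\|\eta_1-\eta_2\|_{H^1_{\gamma_0}}$, the crucial gain being $(2p+1)\gamma_0>\gamma_0$; also $|E(s,\eta_1)-E(s,\eta_2)|\lesssim s^{2p-1}\|\eta_1-\eta_2\|_{H^1_{\gamma_0}}$. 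Combining these with the resolvent bound and the identity $(H_1-E_1)^{-1}-(H_1-E_2)^{-1}=(E_1-E_2)(H_1-E_1)^{-1}(H_1-E_2)^{-1}$ shows that, for $a_0$ small, $\Psi_s$ maps $B_s$ into itself and is there a contraction with constant $\lesssim s^{2p}+s^{4p}\to0$. This produces unique $\eta_s,E(s^2)$ with $\|\eta_s\|_{H^1_{\gamma_0}}\lesssim s^{2p+1}$ and $|E(s^2)+\tfrac14|\lesssim s^{2p}$; the parity $h(-r)=-h(r)$ and uniqueness give $\eta_{-s}=-\eta_s$, hence $E$ is a genuine (continuous, by the last bound) function of $s^2$ on $[0,a_0^2)$. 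This is \eqref{prop:bddst1} for real $z=s$.

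Since $h\in C^1(\R)$ with $h(0)=0$, $|h'(r)|\lesssim|r|^{2p}$, and $H^1_{\gamma_0}$ embeds into $\{u:e^{\gamma_0|x|}u\in L^\infty\}$, the Nemytskii operator $u\mapsto h(u)$ is $C^1$ from a neighbourhood of the curve $\{s\varphi\}$ in $H^1_{\gamma_0}$ into $L^2_{\gamma_0}$ (the exponential weight turns the pointwise gain into a norm gain); hence $\eta-\Psi_s(\eta)$ is jointly $C^1$ in $(s,\eta)$ near any $s_0>0$ and, as $\partial_\eta(\eta-\Psi_s(\eta))=\mathrm{Id}+O(s^{2p})$ is invertible, the implicit function theorem gives $s\mapsto(\eta_s,E(s^2))\in C^1$ on $(0,a_0)$ with $\|\partial_s\eta_s\|_{H^1_{\gamma_0}}\lesssim s^{2p}$; when $2p>1$, $h\in C^2(\R)$ upgrades this to $C^2$, and differentiating $\eta=\Psi_s(\eta)$ twice (leading term $h''(s\varphi+\eta_s)(\varphi+\partial_s\eta_s)^2$) gives $\|\partial_s^2\eta_s\|_{H^1_{\gamma_0}}\lesssim s^{2p-1}$. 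Pulling these back through the smooth map $z\mapsto(|z|,\arg z)$ and multiplying by $e^{\im\arg z}$ gives $Q\in C^1(D_\C(0,a_0)\setminus\{0\};H^1_{\gamma_0})$, resp.\ $C^2$ when $2p>1$; a chain-rule computation on the positive real axis, $D_1Q[s]=\varphi+\partial_s\eta_s$ and $D_2Q[s]=\im\varphi+s^{-1}\im\,\eta_s$ (and similar second-order formulas), together with the rotation invariance of the norms, gives the derivative bounds in \eqref{prop:bddst1} and \eqref{prop:bddst4}. Finally at $z=0$: from $\|Q[z]-z\varphi\|_{H^1_{\gamma_0}}\lesssim|z|^{2p+1}$ with $2p+1>1$, $Q$ is Fréchet differentiable at $0$ with $D_1Q[0]=\varphi$, $D_2Q[0]=\im\varphi$, and $\|D_jQ[z]-\im^{j-1}\varphi\|\lesssim|z|^{2p}\to0$ makes $DQ$ continuous at $0$; when $2p>1$, since $2p+1>2$, $Q$ is twice differentiable at $0$ with vanishing second derivative, and $\|D_jD_kQ[z]\|\lesssim|z|^{2p-1}\to0$ gives continuity of $D^2Q$ there. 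Uniqueness among gauge-covariant $C^1$ maps obeying \eqref{prop:bddst1} follows from uniqueness of the fixed point.

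The only genuinely delicate points are (i) verifying that $u\mapsto g(u^2)u$ is $C^1$ (always) and $C^2$ (exactly when $2p>1$) between the weighted spaces $H^1_{\gamma_0}$ and $L^2_{\gamma_0}$, given only $g\in C^3((0,\infty))$ with $|g^{(k)}(s)|\lesssim s^{p-k}$ --- this hinges on $r\mapsto g(r^2)r$ extending $C^1$, resp.\ $C^2$, across $r=0$ --- and (ii) choosing the weight $\gamma_0<1/2$ so that simultaneously $(H_1-E)^{-1}$ is bounded on $L^2_{\gamma_0}$ near $E=-1/4$ and the nonlinearity gains weighted decay, i.e.\ $(2p+1)\gamma_0>\gamma_0$. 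The rest is bookkeeping with the exponents $|z|^{2p+1},|z|^{2p},|z|^{2p-1}$.
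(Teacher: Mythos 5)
Your proposal is correct, and it rests on the same pillars as the paper's proof: a contraction/implicit-function argument in the exponentially weighted spaces $L^2_{\gamma_0},H^1_{\gamma_0}$, invertibility of $H_1+\frac14$ on $P_cL^2$ between these spaces, and the observation that $r\mapsto g(r^2)r$ is $C^1$ across $r=0$ for every $p>0$ but $C^2$ only when $2p>1$ --- which is indeed where the threshold in \eqref{prop:bddst3}--\eqref{prop:bddst4} originates. The genuine difference is how smoothness in the complex parameter $z$ is recovered. You reduce to the positive real axis by gauge covariance, set $Q[z]=e^{\im\arg z}Q[|z|]$, and must then undo the polar-coordinate singularity at $z=0$ by a limiting argument on the derivatives; your formulas $D_2Q[s]=\im\varphi+s^{-1}\im\,\eta_s$ and $D_2D_2Q[s]=s^{-1}\partial_s\eta_s-s^{-2}\eta_s$ are correct, and the bounds $\|\eta_s\|_{H^1_{\gamma_0}}\lesssim s^{2p+1}$, $\|\partial_s\eta_s\|_{H^1_{\gamma_0}}\lesssim s^{2p}$ are exactly strong enough to absorb the $s^{-1}$ and $s^{-2}$ factors. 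The paper instead factors $Q[z]=z(\varphi+q(|z|^2))$ and works with the two-variable nonlinearity $\tilde h(\rho,\mu)=g(\rho\mu^2)\mu$, solving the fixed-point problem $q(\rho)=R\Phi(\rho,q(\rho))$ in the single real parameter $\rho=|z|^2$; since $|z|^2$ is a polynomial in $(z_1,z_2)$ and $\|\partial_\rho q\|_{H^1_{\gamma_0}}\lesssim\rho^{p-1}$, the chain rule delivers $\|D_jQ[z]-\im^{j-1}\varphi\|_{H^1_{\gamma_0}}\lesssim|z|^{2p}$ with no separate treatment of the origin, at the price of tracking $\partial_\rho$-derivatives of $\tilde h$ that degenerate as $\rho\to0$. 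A further cosmetic difference: the paper inverts the fixed operator $(H_1+\frac14)|_{P_cL^2}$ and places the energy correction $\mathfrak e(\rho,q)(\varphi+q)$ inside the map $\Phi$, whereas you use the $E$-dependent resolvent together with a Lipschitz estimate for $E(s,\eta)$; both are legitimate. In short, your route costs some extra bookkeeping at $z=0$, while the paper's factorization buys the regularity there automatically.
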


\begin{remark}
In the case of  power type nonlinearities $g(s)=s^p$, there is an explicit formula for $Q[z]$.
See \cite{ozawa,MMS1}.
\end{remark}

Our first result is the following, related to \cite{GNT,mizumachi,MMS1}, see \cite{CM15APDE}  for   more references.
\begin{theorem}\label{thm:small en}  Assume    $p>0$  in \eqref{nonlinearity}. Then there exist  $\epsilon _0 >0$, $\gamma>0$ and $C>0$ such that  for $\epsilon :=\| u (0)\| _{H^1}<\epsilon _0  $ the  solution  $u(t)$ of  \eqref{1} can be written uniquely  for all times as
 \begin{equation}\label{eq:small en1}
\begin{aligned}&    u(t)= Q[z (t)]+\xi (t) \text{ with $\xi (t) \in
P_c H^1$,}
\end{aligned}
\end{equation}
s.t.  we have
 \begin{equation}\label{eq:small en2}
\begin{aligned}&     |z(t)|+\| \xi (t)\|  _{H^1}\le C \epsilon \text{ for all $t\in [0,\infty ) $, }
\end{aligned}
\end{equation}
\begin{equation}\label{eq:small en3}
\begin{aligned}&     \int _0^\infty    \|\xi\|_{H^1_{-\gamma}}^2 dt \le C \epsilon .
\end{aligned}
\end{equation}

\end{theorem}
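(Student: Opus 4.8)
\emph{Outline.} The plan is a modulation-plus-bootstrap argument whose engine is the local-smoothing theory of $e^{-\im tH_1}P_c$ on $L^2_c$. First I would dispose of the set-up: since $H_1$ generates a unitary group on $L^2$ and a bounded group on $H^1$, and the nonlinearity is energy subcritical in one dimension, \eqref{1} is locally well posed in $H^1$, and globality will follow a posteriori from \eqref{eq:small en2}. By \eqref{prop:bddst1} one has $Q[z]=z\varphi+O(|z|^{2p+1})$ in $H^1_{\gamma_0}$, and $\langle\varphi,\varphi\rangle=1$, $\langle\varphi,\im\varphi\rangle=0$; hence $z\mapsto(\langle u-Q[z],\varphi\rangle,\langle u-Q[z],\im\varphi\rangle)$ has differential $\mathrm{Id}+O(|z|^{2p})$ at $z=0$, and the implicit function theorem produces, for $\|u\|_{H^1}$ small, a unique $z=z(u)\in D_\C(0,a_0)$, of class $C^1$ in $u$, with $u-Q[z(u)]\in P_cH^1$. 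This gives \eqref{eq:small en1} locally in time, and the decomposition persists while $|z(t)|<a_0$, so it suffices to prove \eqref{eq:small en2}--\eqref{eq:small en3} a priori, with constants uniform in $T$, on any interval $[0,T]$ on which $|z|\le a_0/2$, and then close by continuity.

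For \eqref{eq:small en2} I would use the conserved mass $M(u)=\|u\|_{L^2}^2$ and energy $\mathcal E(u)=\tfrac12(\|\partial_xu\|_{L^2}^2-|u(0)|^2)+\tfrac12\int_\R G(|u|^2)\,dx$. From $\|Q[z]\|_{L^2}^2=|z|^2+O(|z|^{2p+2})$, from $\langle Q[z],\xi\rangle=\langle Q[z]-z\varphi,\xi\rangle=O(|z|^{2p+1}\|\xi\|_{L^2})$ (as $\xi\in P_cH^1$), and from conservation of $M$, one gets $|z(t)|+\|\xi(t)\|_{L^2}\lesssim\epsilon$. Using \eqref{eq:sp} one finds $\mathcal E(Q[z])=-\tfrac18|z|^2+O(|z|^{2p+2})$, and expanding $\mathcal E(Q[z]+\xi)$, rewriting $H_1Q[z]$ via \eqref{eq:sp} and using $\xi\in P_cH^1$, the cross terms are $O(|z|^{2p+1}\|\xi\|_{L^2}+|z|^{2p}\|\xi\|_{L^2}^2+\|\xi\|_{H^1}^{2p+2})$, so conservation of $\mathcal E$ gives $\tfrac12\langle H_1\xi(t),\xi(t)\rangle=\mathcal E(u(0))-\mathcal E(Q[z(t)])+O(\epsilon^{2p+2})\lesssim\epsilon^2$. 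Since $\langle H_1\xi,\xi\rangle\ge0$ on $P_cH^1$ and, by the trace bound $|\xi(0)|^2\lesssim\|\xi\|_{L^2}\|\partial_x\xi\|_{L^2}$, one has $\|\xi\|_{H^1}^2\lesssim\langle H_1\xi,\xi\rangle+\|\xi\|_{L^2}^2$, this yields $\|\xi(t)\|_{H^1}\lesssim\epsilon$; in particular $|z|\le a_0/2$ once $\epsilon_0$ is small, so the decomposition is global.

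For \eqref{eq:small en3} I would first derive the modulation equations: inserting \eqref{eq:small en1} into \eqref{1}, using \eqref{eq:sp}, and pairing with $\varphi,\im\varphi$ gives, after inverting the near-identity $2\times2$ system, $\dot z=-\im E(|z|^2)z+R_z$. The key structural fact is that the potentially non--time--integrable term $E(|z|^2)Q[z]-\im(D_1Q[z]\dot z_1+D_2Q[z]\dot z_2)$ is, because \eqref{eq:sp} holds \emph{exactly} and because differentiating \eqref{Q:gauge} in $\theta$ gives $\im(z\,\partial_zQ[z]-\bar z\,\partial_{\bar z}Q[z])=\im Q[z]$, equal to $-\im(\partial_zQ[z]\,R_z+\partial_{\bar z}Q[z]\,\overline{R_z})$, i.e.\ proportional to $R_z$; moreover the source driving $z$ reduces to $\langle g(|u|^2)u-g(|Q[z]|^2)Q[z],\im\varphi\rangle$ and its $\varphi$-analogue, which are \emph{at least linear in $\xi$}, so $|R_z|\lesssim|z|^{2p}\|\xi\|_{H^1_{-\gamma}}+\|\xi\|_{H^1_{-\gamma}}^{2p+1}$. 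Projecting the equation for $u$ by $P_c$, using $P_c\varphi=0$ and $\|P_cD_jQ[z]\|_{H^1_\gamma}\lesssim|z|^{2p}$, one obtains $\im\dot\xi=H_1\xi+P_c\big(F_{\mathrm{loc}}+g(|\xi|^2)\xi\big)$, where $F_{\mathrm{loc}}$ is exponentially localized, vanishes at $\xi=0$, and obeys $\|F_{\mathrm{loc}}\|_{L^2_\gamma}\lesssim(|z|^{2p}+\|\xi\|_{H^1}^{2p})\|\xi\|_{H^1_{-\gamma}}$ for $\gamma<\min\{\gamma_0,p,\tfrac12\}$.

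Finally I would run a Morawetz-type virial identity at the $H^1$ level for this $\xi$-equation: its main term controls $\int_0^T\|\xi\|_{H^1_{-\gamma}}^2\,dt$ — this is where it matters that $0$ is neither an eigenvalue nor a resonance of $H_1$ — up to a boundary term $\sup_t|J(t)|\lesssim\sup_t\|\xi\|_{H^1}^2\lesssim\epsilon^2$; the $F_{\mathrm{loc}}$-contribution is $\lesssim(\sup_t|z|^{2p}+\sup_t\|\xi\|_{H^1}^{2p})\int_0^T\|\xi\|_{H^1_{-\gamma}}^2\,dt$ by Cauchy--Schwarz, and the $g(|\xi|^2)\xi$-contribution, after integrating by parts against the virial weight, becomes $\int_0^T\!\int_\R a'(x)\,|\xi|^{2p+2}\,dx\,dt\lesssim\sup_t\|\xi\|_{L^\infty}^{2p}\int_0^T\|\xi\|_{H^1_{-\gamma}}^2\,dt$ with $a'$ localized. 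By \eqref{eq:small en2} the two error prefactors are $O(\epsilon_0^{2p})<\tfrac12$, so both are absorbed and $\int_0^T\|\xi\|_{H^1_{-\gamma}}^2\,dt\lesssim\epsilon^2\le\epsilon$ uniformly in $T$; the continuity argument then closes the bootstrap and makes $u$ global. The hard part, I expect, is precisely this last estimate: on the one hand the structural cancellation in the modulation, which is what keeps every surviving source term either exponentially localized with an $O(|z|^{2p})$ gain or supercritical in $\xi$, and on the other hand the treatment of the non-localized term $g(|\xi|^2)\xi$ — for which $\|\xi\|_{H^1}$ need not decay — handled via integration by parts against the virial weight together with $\|\xi\|_{L^\infty}\lesssim\|\xi\|_{H^1}\lesssim\epsilon$. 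A minor additional technicality is that for $2p\le1$ the profile $Q[z]$ is only $C^1$ in $z$, so the modulation parameter must not be differentiated more than once.
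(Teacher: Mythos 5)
Your overall architecture coincides with the paper's: modulation with $\xi\in P_cH^1$ via the implicit function theorem, the bound \eqref{eq:small en2} from mass/energy conservation plus coercivity of $H_1$ on $P_cH^1$ (a step the paper only asserts and you correctly fill in), and a virial/commutator identity closed against the modulation equation to get \eqref{eq:small en3}. However, there is a genuine gap in your treatment of the delocalized nonlinear term $g(|\xi|^2)\xi$ inside the virial estimate --- the very step you flag as ``the hard part'' but then dispose of too quickly.

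Concretely: after integrating by parts against the virial weight, the contribution of $g(|\xi|^2)\xi$ is $\int\psi_A'\,O(|\xi|^{2p+2})\,dx=\int\zeta_A^2\,O(|\xi|^{2p+2})\,dx$, and you bound it by $\|\xi\|_{L^\infty}^{2p}\int\zeta_A^2|\xi|^2\,dx=\|\xi\|_{L^\infty}^{2p}\|w\|_{L^2}^2$ with $w=\zeta_A\xi$, claiming absorption into the main term. But the coercive quantity produced by the virial identity is only $\|w'\|_{L^2}^2+|w(0)|^2$, equivalently $\|w\|_{X}^2=\|w'\|_{L^2}^2+\|\langle x\rangle^{-2}w\|_{L^2}^2$, and $\|w\|_{L^2}^2$ is \emph{not} controlled by this: for $w_n(x)=n^{-1/2}\phi(x/n)$ with $\phi$ a fixed bump, $\|w_n\|_{L^2}$ is constant while $\|w_n\|_{X}\to 0$. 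Put differently, the weight $\zeta_A^2\sim e^{-2|x|/A}$ in the error decays at exactly the rate at which the main term loses coercivity, so no choice of $\gamma$ makes both $\int\zeta_A^2|\xi|^{2p+2}\lesssim\|\xi\|_{L^\infty}^{2p}\|\xi\|_{H^1_{-\gamma}}^2$ and $\|\xi\|_{H^1_{-\gamma}}^2\lesssim\|w\|_{X}^2$ hold simultaneously; the discrepancy is the polynomial factor $\langle x\rangle^{-4}$ and it cannot be waved away. This is precisely what the paper's Lemma \ref{lem:errmain5} resolves with the device of Kowalczyk--Martel--Mu\~noz: peel off a power $q=2p/3$ via $\|\xi\|_{H^1}^{q}$, integrate $\int e^{\pm(2p-q)x/A}|w|^{2(p+1)-q}\,dx$ by parts once more to produce a factor of $|w'|$, apply Cauchy--Schwarz, and self-absorb, arriving at $\int\zeta_A^2|\xi|^{2(p+1)-q}\,dx\lesssim\|w'\|_{L^2}^2$ under the smallness condition $A^2a_0^{4p/3}\ll 1$. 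Without this (or an equivalent) step your bootstrap does not close. The remainder of your outline --- the cancellation structure in the modulation equations, the $O(|z|^{2p})$ gain on the localized terms, and the caveat that $Q[z]$ is only $C^1$ when $2p\le 1$ --- matches the paper.
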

Theorem \ref{thm:small en}  claims that   solutions with sufficiently small   $H^1$ norm     converge asymptotically to the set  formed by the $Q[z]$. Indeed  formula \eqref{eq:small en3} is stating that, in an averaged sense, $\xi \xrightarrow {t\to \infty }  0$ locally in space.  In  Theorem \ref{thm:small en}  there is no proof of   \textit{selection of ground state}:  we do not prove that  up to a phase, $z(t)$ has a limit as $t\to +\infty$. However, if
we strengthen the hypotheses of the nonlinearity $g(s)$, we obtain also the selection of ground states. This will be our second result. It requires  a more subtle representation of $u(t)$ than the one in \eqref{eq:small en1}, due to Gustafson \textit{et al.} \cite{GNT}.

\begin{definition}\label{def:contsp}
Consider the     $a _0 >0$ in Proposition \ref{prop:bddst}.
\begin{equation}\label{eq:contsp}
\begin{aligned}
\mathcal{H}_c[z]&  :=\left\{\eta\in L^2(\R ) :\     \<\im \,  {\eta},D _{1} Q \>=  \<\im \, {\eta} ,D _{2} Q\>=0 \right\}.
\end{aligned}
\end{equation}
\end{definition}

It is immediate that   $\mathcal{H}_c[0]=L^2_c $. Our second result is the following.
\begin{theorem}\label{thm:small en-}  Let $p> 1/2$  in  in \eqref{nonlinearity}. Then
 there exist  $\epsilon _0 >0$, $\gamma>0$ and $C>0$ such that  for $\epsilon :=\| u (0)\| _{H^1}<\epsilon _0  $ the  solution  $u(t)$ of  \eqref{1} can be written uniquely  for all times as
 \begin{equation}\label{eq:small en1-}
\begin{aligned}&    u(t)= Q[z (t)]+ \eta (t) \text{ with $\eta (t) \in
\mathcal{H}_c[z(t)]$,}
\end{aligned}
\end{equation}
s.t.  we have
 \begin{equation}\label{eq:small en2-}
\begin{aligned}&     |z(t)|+\| \eta  (t)\|  _{H^1}\le C \epsilon \text{ for all $t\in [0,\infty ) $, }
\end{aligned}
\end{equation}
\begin{equation}\label{eq:small en3-}
\begin{aligned}&     \int _0^\infty    \| \eta\|_{H^1_{-\gamma}}^2 dt \le C \epsilon .
\end{aligned}
\end{equation}
and     there exists  a $z  _+\in \C $ such that \begin{equation}\label{eq:small en22} \begin{aligned}&       \lim _{t\to +\infty } z (t) e^{ \im   \int _0^t E[z(s)]  ds}=z _+. \end{aligned} \end{equation}
\end{theorem}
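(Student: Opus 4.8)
\textbf{Proof strategy for Theorem \ref{thm:small en-}.}

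The plan is to run a standard ``modulation + dispersive estimates'' argument in the spirit of \cite{GNT,mizumachi,CM15APDE}, built on top of Theorem \ref{thm:small en}. First I would upgrade the decomposition \eqref{eq:small en1} to the decomposition \eqref{eq:small en1-}: given $u(t)=Q[z(t)]+\xi(t)$ with $\xi(t)\in P_cH^1$, I want to find a (small) gauge/parameter adjustment producing $\tilde z(t)$ with $u(t)=Q[\tilde z(t)]+\eta(t)$, $\eta(t)\in\mathcal H_c[\tilde z(t)]$. Since $\mathcal H_c[0]=L^2_c$ and the orthogonality conditions in \eqref{eq:contsp} depend smoothly on $z$ through $D_jQ[z]$, which by \eqref{prop:bddst1} are $O(|z|^{2p})$-close to $\im^{j-1}\varphi$, the map $(z,\xi)\mapsto$ (new parameter) is a small Lipschitz perturbation of the identity on the relevant ball; the implicit function theorem (using $p>1/2$, hence the $C^2$ regularity \eqref{prop:bddst3}--\eqref{prop:bddst4} of $Q$, to differentiate the constraint in time) gives existence, uniqueness, and the bounds \eqref{eq:small en2-}, \eqref{eq:small en3-} by equivalence of the two representations on the small-energy regime. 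This step is essentially bookkeeping.

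Next I would derive the modulation equations. Substituting \eqref{eq:small en1-} into \eqref{1} and using \eqref{eq:sp}, one gets, after projecting onto the two-dimensional ``discrete'' directions $D_1Q[z],D_2Q[z]$ and onto $\mathcal H_c[z]$, a system of the form
\begin{align}\label{eq:modeq-plan}
\im\dot z = E(|z|^2)z + \langle \text{quadratic-in-}(z,\eta)\text{ terms}\rangle,\qquad
\im\dot\eta = H_1\eta + P_c[z]\big(\text{nonlinear remainder}\big),
\end{align}
where $P_c[z]$ is the projection associated to $\mathcal H_c[z]$ and the remainder is quadratic in $(\eta,\text{weakly-localized }z\text{-terms})$; the point of the Gustafson--Nakanishi--Tsai choice of continuous-spectrum subspace is that the dangerous $O(\eta)$ and $O(z\eta)$ linear-in-$\eta$ terms in the $\dot z$ equation are either absent or come paired with the exponentially localized weight $\varphi$. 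To close the argument I would then (i) use the endpoint Strichartz / local smoothing and the local decay estimate $\|e^{-\im tH_1}P_c f\|_{L^2_{-\gamma}}\lesssim \langle t\rangle^{-3/2}\|f\|_{L^2_\gamma}$ valid because $0$ is neither an eigenvalue nor a resonance for $H_1$ (stated after \eqref{schopSPec}), to control $\eta$, and (ii) feed the $L^2_tL^2_{-\gamma}$ bound \eqref{eq:small en3-} on $\eta$ into the $\dot z$ equation. Writing $z(t)=|z(t)|e^{\im\vartheta(t)}$ and $w(t):=z(t)e^{\im\int_0^tE(|z(s)|^2)\,ds}$, the equation for $w$ reads $\im\dot w=e^{\im\int_0^tE}\cdot(\text{remainder})$, whose right-hand side is, schematically, a sum of terms each carrying a factor of $\|\eta(t)\|_{L^2_{-\gamma}}$ (or $|z(t)|\|\eta(t)\|_{L^2_{-\gamma}}$, or $\|\eta(t)\|_{L^2_{-\gamma}}^2$) times something bounded; since $|z|\le C\epsilon$ is bounded and $\eta\in L^2_t(L^2_{-\gamma})$ with norm $\le C\epsilon$, Cauchy--Schwarz gives $\dot w\in L^1(0,\infty)$, hence $w(t)$ converges to some $z_+\in\C$, which is exactly \eqref{eq:small en22}.

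The main obstacle is step (ii)/the structure of the $\dot z$ equation: a priori, substituting the decomposition produces a term in $\dot z$ that is \emph{linear} in $\eta$ and only quadratically small, namely something like $\langle \eta, (\text{profile localized like }\varphi)\rangle$, and merely knowing $\eta\in L^2_tL^2_{-\gamma}$ gives this term only in $L^2_t$, not $L^1_t$ — not enough for convergence. The resolution is the standard one but must be carried out carefully: one shows that the genuinely linear-in-$\eta$ contribution to $\dot z$ (after the $\mathcal H_c[z]$-adapted projection) vanishes identically by the defining orthogonality \eqref{eq:contsp}, so that the surviving terms are at least quadratic — either $|z|\cdot\|\eta\|_{L^2_{-\gamma}}$ (then $L^1_t$ by Cauchy--Schwarz against the trivial bound on $|z|$, provided one also knows $|z|\in L^2_t$ on the localized scale, which follows from the $z$-equation and the smallness of the nonlinear coupling, or more simply against the $L^\infty_t$ bound combined with $\|\eta\|_{L^2_t L^2_{-\gamma}}<\infty$... here the precise bookkeeping of which factor is square-integrable matters) or $\|\eta\|_{L^2_{-\gamma}}^2$ (then directly $L^1_t$ by \eqref{eq:small en3-}). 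Making the cancellation of the linear term precise, and checking that \emph{all} remaining terms in $\dot w$ are integrable in time, is the crux; the rest (modulation IFT, Strichartz/smoothing for $e^{-\im tH_1}P_c$, passing to the limit) is routine. Note the hypothesis $p>1/2$ enters precisely here: it guarantees the quadratic-in-$z$ remainder $g(|Q[z]|^2)Q[z]-E(|z|^2)Q[z]$ and the second derivatives of $Q$ in \eqref{prop:bddst4} are controlled, so that the nonlinear terms in \eqref{eq:modeq-plan} are genuinely higher order and the above integrability survives.
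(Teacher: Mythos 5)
Your outline of the modulation step and of the final convergence argument for $\rho(t):=z(t)e^{\im\int_0^t E}$ matches the paper's: the orthogonality $\langle\im\eta,D_jQ[z]\rangle=0$ is imposed precisely so that in the modulation equation \eqref{11-} the surviving source terms are controlled by $\|\langle x\rangle^{-2}w\|_{L^2}^2$ (quadratic in the localized part of $\eta$), whence $\dot\rho\in L^1(\R_+)$ follows from $\int_0^\infty\|\eta\|_{H^1_{-\gamma}}^2\,dt\lesssim\epsilon$ by exactly the bookkeeping you describe. The hypothesis $p>1/2$ enters where you say: it gives the $C^2$ regularity of $Q$ needed to run the implicit function theorem for the $z$-dependent orthogonality, and it makes $\langle f(\eta),D_jQ\rangle\lesssim\|\eta\|_{L^\infty}^{2p-1}\|\langle x\rangle^{-2}w\|_{L^2}^2$ genuinely quadratic.

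The genuine gap is your step (i). You propose to obtain the spacetime bound \eqref{eq:small en3-} on $\eta$ from endpoint Strichartz, local smoothing and the weighted decay of $e^{-\im tH_1}P_c$, and you label that part ``routine.'' It is not: for the nonlinearity $|\eta|^{2p}\eta$ with $p$ only slightly above $1/2$ the power is barely superquadratic, and closing a Strichartz/decay bootstrap in one dimension at such low powers is precisely the obstruction that forces Masaki \emph{et al.} to assume $p\ge 2$; the whole point of this paper is to bypass that machinery. The paper instead derives \eqref{eq:small en3-} from a positive-commutator (localized virial) identity: one differentiates $\mathcal{J}(\eta)=2^{-1}\langle\im\bigl(\tfrac{\psi_A'}{2}+\psi_A\partial_x\bigr)\eta,\eta\rangle$ along the flow, shows via Lemma \ref{lem:lemma1} that the quadratic form $\langle(\tfrac{\psi_A'}{2}+\psi_A\partial_x)\eta,H_1\eta\rangle$ equals $\langle H_{1/2}w,w\rangle$ plus an $O(A^{-1})$ error for $w=\zeta_A\eta$, and uses orthogonality to the ground state to obtain the coercivity $\gtrsim\|w'\|_{L^2}^2+\|\langle x\rangle^{-2}w\|_{L^2}^2$ of Proposition \ref{prop:virial-}, with the extra linearized potential term $V[z]\eta$ absorbed by Lemma \ref{lem:errmain3}; no linear dispersive estimate is used anywhere. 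Without this substitute your argument does not close on the stated range $p>1/2$; with it, your treatment of \eqref{eq:small en1-}--\eqref{eq:small en2-} and of the limit \eqref{eq:small en22} goes through essentially as in the paper.
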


We don't know if the last statement, with the limit \eqref{eq:small en22},  is true  for $p\le 1/2.$
We will prove Theorem \ref{thm:small en}  in Sect. \ref{sec:thm:small en} and  Theorem \ref{thm:small en-}  in Sect. \ref{sec:thm:small en-}.

Equations like  \eqref{1} and its particular case \eqref{1prime} represent an interesting special type of the NLS in 1--D.   Related models,      obtained eliminating the linear $\delta (x)$ potential and   replacing $ g(|u|^2)  u$  with $\sum _{j=1}^n \delta (x-x_j)g(|u|^2)  u$, in some cases have been shown to satisfy   very satisfactory characterizations of the global   time behavior for all their finite energy solutions; see \cite{Kom03}--\cite{KK10b}, which solved the   \textit{Soliton Risolution Conjecture} in these cases.

Returning to equation  \eqref{1}, Goodman \textit{et. al.} \cite{GHW}
 and Holmer  \textit{et al.} \cite{holmer1}--\cite{holmer4}  have shown in the cubic case  interesting patterns involving solitons, usually for finite intervals of time or numerically. Some of these results have been proved for global times   by    Deift and Park \cite{DP}, using the Inverse Scattering Transform.
  Masaki \textit{et  al.} \cite{MMS1}  is a transposition  to \eqref{1prime}  of a result similar to Theorem \ref{thm:small en-}, but for more regular  potentials, by  Mizumachi \cite{mizumachi}. Similarly,  the result in   Masaki \textit{et  al.} \cite{MMS2} we described under \eqref{1prime}   transposes
   to the case of $\delta $ potentials work on dispersion of very small solutions
   for NLS's with a non--trapping and quite regular potential  in \cite{Delort,Germain,Naumkin}.  Even though they are usually   motivated
   by the problem of stability of solitons,  currently it is not so clear how to get from them
  results of the type $\| \xi(t) \| _{L^\infty (\R )}\lesssim t^{-\frac{1}{2}}$ as $t\to +\infty$ for the error term $\xi(t)$ in \eqref{eq:small en1}  or for the $\eta (t)$ in   \eqref{eq:small en1-}.  Such kind of transformation of results around 0 into results around a soliton
  exists in the context of the theory of Integrable Systems, where there are appropriate coordinate changes named   B\"acklund  and    Darboux  transforms, see for instance Deift and Park \cite{DP}.  However   for non integrable perturbations of the cubic NLS
such coordinate changes  represent  an   open question,  c.f.r.    the  discussion    in  Mizumachi and Pelinovsky  \cite{MP}.

The main  motivation for this paper  is then  to show the promise of an alternative
method,  involving positive commutators, which is classical in Quantum Mechanics, see for example Reed and Simon \cite[pp. 157--163]{RS4} and \cite{amrein,Derezinzki}.  In the nonlinear setting
the method is also classical and  has been extensively used to prove   dispersion, like for example Morawetz estimates, see \cite{caz},  or in the analysis of blow up, see for example \cite{ogawa,MR1}. The method
represents the tool of choice to  complete the last step of the proof in the theory of stabilization developed by Kenig and Merle  \cite{KM06Invent} (a possible alternative, the \textit{energy channel method} of Duyckaerts \textit{et al.} \cite{Duyckaerts}, has not been adapted yet to NLS's).
In this paper we are inspired  by the stability of various patterns studied for wave like equations by Kowalczyk et. al  \cite{KMM2,KMM3}. The main point here,  is that this method
can be applied rather simply in the proof of Theorems \ref{thm:small en}  and  \ref{thm:small en-}.

 \noindent In the nonlinear setting, an insidious problem arises from the  fact that the commutators often  have   some negative
eigenvalues.
 An important part of the proofs in papers such as      \cite{KMM2}  consists in showing that analogues of the $\xi $ in \eqref{eq:small en1} or  of the $\eta$ in \eqref{eq:small en1-}, live where the commutator is positive.  If in \eqref{1} we replace
the $ \delta$--potential with a more regular one, proving such positivity, or circumventing possible negativity, appears to be mostly an open problem.  See \cite{Tao} for related problems.
  In the case of a $ \delta$--potential in 1--D, we show that
this is an easy problem (see also Banica and Visciglia \cite{banica}, Ikeda and Inui \cite{ikeda} and Richard \cite{richard}).
This allows us to  cover with a rather simple proof cases outside the reach of the theory in    Masaki \textit{et  al.} \cite{MMS1}, where the  use of Strichartz estimates
restricts  consideration to $p\ge 2$.  In this sense we go beyond the results for more regular potentials considered by
Mizumachi   \cite{mizumachi},  in turn related to \cite{SW1,SW2,PW,GNT}. In some of these older papers there is a clear interest at obtaining  the largest possible set of
of values for the exponent $p$.  However     they are severely restricted  by their  dependence      on   dispersive and/or Strichartz estimates,   not always sufficiently  robust in nonlinear settings, see \cite{strauss81}. The commutator method can be   more robust, as results such as   \cite{KMM2,KMM3} show. In the literature some  partial stability results have been obtained for subcritical nonlinearities in 2--D and 3--D by Kirr \textit{et  al.}
\cite{kirr1,kirr2} using dispersive estimates. But exactly like for the dispersion results on small solutions of cubic NLS's with potentials in \cite{Delort,Naumkin,Germain,MMS2}  or for the theory  initiated by Deift and Zhou \cite{DZ} on the Scattering Transform in non--Integrable Systems,
 the ultimate test will be how pliable,   widely   utilizable and not too technically complicated  will they be.  Our point here is that the theory in  \cite{KMM2,KMM3} seems the most promising.

Just as a final remark,  to emphasize once more the effectiveness of the commutator method for equations
like \eqref{1}, in Sect.\  \ref{sec:thm:dispersion}
 we will also give a very simple proof of  the following result
  for defocusing  equations \eqref{1} with non--trapping $\delta$  potential,
   which to our knowledge is not in the literature.

\begin{theorem}\label{thm:dispersion}  Consider equation \eqref{1}  with $q<0$,  $g \ge 0$ everywhere and
  $sg(s)- G(s) \ge 0$ for any $s\ge 0$ for $G$ defined in \eqref{eq:G}.  Then for any $\gamma >0$  there exists a  $C_ {\gamma  } >0$ s.t. for any $u_0\in H ^1(\R , \C )$ the corresponding strong solution $u(t)$ satisfies
\begin{equation}\label{eq:dispers-}
\begin{aligned}&     \int _0 ^\infty     \| u\|_{H^1_{-\gamma}}^2 dt < C_ {\gamma  }  \(       ({E (u_0) Q(u_0)} ) ^{\frac{1}{2}}+  Q(u_0) \) .
\end{aligned}
\end{equation}

\end{theorem}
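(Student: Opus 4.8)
The plan is to run a Morawetz (virial / positive‑commutator) estimate generated by a \emph{bounded} weight, and to feed the zeroth–order part of the estimate through a weighted Hardy inequality. Throughout, $Q(u):=\|u\|_{L^2}^2$ denotes the conserved charge and $E$ the conserved energy of \eqref{1}.

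\textbf{Step 1 (global control).} Since $q<0$ the quadratic form of $H_q$ is coercive, $\<H_qu,u\>=\int_\R|\partial_xu|^2\,dx-q|u(0)|^2\ge\int_\R|\partial_xu|^2\,dx$, and $g\ge0$ forces $G\ge0$. Hence, from conservation of $E$ and $Q$,
\[
\|\partial_xu(t)\|_{L^2}^2+|q|\,|u(t)(0)|^2\ \lesssim\ E(u_0),\qquad \|u(t)\|_{L^2}^2=Q(u_0),\qquad \|u(t)\|_{H^1}^2\lesssim E(u_0)+Q(u_0),
\]
so in particular $u(t)$ is global in $H^1$.

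\textbf{Step 2 (the Morawetz identity).} Fix $\alpha>\tfrac12$ and let $a\in C^\infty(\R)$ be the \emph{odd} primitive of $(1+x^2)^{-\alpha}$; then $a$ is bounded, $a'(x)=(1+x^2)^{-\alpha}>0$, and a direct computation gives $-\tfrac12a'''=\alpha\,\dfrac{1-(2\alpha+1)x^2}{(1+x^2)^{\alpha+2}}$. Set $I(t):=\Im\int_\R\overline u\,a\,\partial_xu\,dx$. Carrying out the virial computation on $(0,\infty)$ and $(-\infty,0)$ separately and combining the boundary contributions at $0^{\pm}$ by means of the interface relations $u(0^+)=u(0^-)$ and $\partial_xu(0^+)-\partial_xu(0^-)=-q\,u(0)$ from \eqref{domain} (note $a(0)=0$ kills all other interface terms) one obtains
\[
\frac{d}{dt}I(t)=2\int_\R a'|\partial_xu|^2\,dx-\frac12\int_\R a'''|u|^2\,dx+\int_\R a'\bigl(|u|^2g(|u|^2)-G(|u|^2)\bigr)dx\;-\;q\,a'(0)|u(0)|^2 .
\]
Here the crucial point is that, because $q<0$, the $\delta$–interface contributes only the \emph{nonnegative} term $-q\,a'(0)|u(0)|^2=|q|\,a'(0)|u(0)|^2\ge0$ — this is the sense in which the one–dimensional $\delta$ potential makes the commutator "easy". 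Likewise $a'\bigl(sg(s)-G(s)\bigr)\ge0$ by hypothesis. Finally $|I(t)|\le\|a\|_{L^\infty}\|u(t)\|_{L^2}\|\partial_xu(t)\|_{L^2}\lesssim\bigl(E(u_0)Q(u_0)\bigr)^{1/2}$ by Step 1, so the left–hand side is a bounded total derivative.

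\textbf{Step 3 (the indefinite term, via weighted Hardy).} The only term of uncontrolled sign is $-\tfrac12\int a'''|u|^2$, which is $\ge0$ for $|x|<(2\alpha+1)^{-1/2}$ but negative in the far field, where $\bigl|\tfrac12a'''\bigr|\le\alpha(2\alpha+1)(1+x^2)^{-\alpha-1}$. Here one uses the elementary weighted Hardy inequality
\[
\int_\R (1+x^2)^{-\alpha-1}|u|^2\,dx\ \le\ C_\alpha\Bigl(|u(0)|^2+\int_\R (1+x^2)^{-\alpha}|\partial_xu|^2\,dx\Bigr),
\]
which holds for every $\alpha>-\tfrac12$ because $\sup_{r>0}\bigl(\int_{|x|>r}(1+x^2)^{-\alpha-1}dx\bigr)\bigl(\int_{|x|<r}(1+x^2)^{\alpha}dx\bigr)<\infty$ (plus the trivial estimate on $|x|<1$). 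Integrating the identity of Step 2 on $[0,T]$, transferring the negative part of $-\tfrac12\int a'''|u|^2$ to the right, and bounding it through this Hardy inequality by a multiple of $\int_0^T\!\bigl(|u(0)|^2+\int a'|\partial_xu|^2\bigr)dt$ plus a multiple of $Q(u_0)$, one absorbs it into the left side; the parameter $\alpha$ is chosen to make the absorption work, and the extra summand $Q(u_0)$ in \eqref{eq:dispers-} is exactly what accommodates the residue. This produces
\[
\int_0^{\infty}\Bigl(\int_\R (1+x^2)^{-\alpha}|\partial_xu|^2\,dx+|u(0,t)|^2\Bigr)dt\ \le\ C\Bigl(\bigl(E(u_0)Q(u_0)\bigr)^{1/2}+Q(u_0)\Bigr),
\]
and one more use of the Hardy inequality also yields $\int_0^\infty\!\int_\R(1+x^2)^{-\alpha-1}|u|^2\,dx\,dt\le C(\cdots)$. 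Since $e^{-2\gamma|x|}\lesssim_\gamma(1+x^2)^{-\alpha-1}\le(1+x^2)^{-\alpha}$ and $\|u\|_{H^1_{-\gamma}}^2\lesssim_\gamma\int_\R e^{-2\gamma|x|}\bigl(|\partial_xu|^2+|u|^2\bigr)dx$ for $0<\gamma\le1$, the bound \eqref{eq:dispers-} follows.

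\textbf{Main obstacle.} The delicate step is Step 3. For Schr\"odinger (unlike the wave‑type models of \cite{KMM2,KMM3}) the commutator symbol $-\tfrac12a'''$ of any bounded, decaying weight is necessarily negative for large $|x|$, so the zeroth‑order (weighted $L^2$) part of the estimate cannot come from sign considerations alone, and the "virial" $I(t)$ must be kept bounded, which forbids the dilation weight $a=x$. What rescues the one–dimensional argument is that the negative contribution sits precisely at the scale governed by the weighted Hardy inequality relative to the $|\partial_xu|^2$–weight that the same identity already controls; the only real work is to track the constants in this absorption (using the $Q(u_0)$ cushion), everything else being routine. The $\delta$ potential, being repulsive, contributes only the harmless nonnegative boundary term $|q|a'(0)|u(0)|^2$.
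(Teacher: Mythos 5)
Your overall architecture (a virial identity generated by a bounded antisymmetric weight, positivity of the $\delta$--interface term for $q<0$, positivity of the nonlinear term from $sg(s)-G(s)\ge 0$, and $|I(t)|\lesssim (E(u_0)Q(u_0))^{1/2}$) matches the paper's, and your Morawetz identity in Step 2, including the interface term $|q|a'(0)|u(0)|^2$, is correct. The route through the indefinite term $-\frac12\int a'''|u|^2$ is, however, genuinely different from the paper's, and as written Step 3 does not close. First, the residue of an incomplete absorption is a time integral $\int_0^T(\cdots)\,dt$; it cannot be ``accommodated by the $Q(u_0)$ summand,'' which is a conserved, $T$--independent quantity, so that escape hatch is unavailable. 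Second, the freedom to ``choose $\alpha$ to make the absorption work'' is blocked: you need $\alpha>\tfrac12$ for $a$ to be bounded (otherwise $I(t)$ is not controlled by $(E(u_0)Q(u_0))^{1/2}$), and then the absorption constant is not small. Indeed, testing your weighted Hardy inequality on functions essentially equal to a constant on $[-R,R]$ with $R\to\infty$ shows that the coefficient of $|u(0)|^2$ on its right--hand side is at least $\int_\R (1+x^2)^{-\alpha-1}dx$, which tends to $2$ as $\alpha\downarrow\tfrac12$; multiplied by $\alpha(2\alpha+1)\ge 1$ this gives an absorption constant bounded below by $2$ over the admissible range of $\alpha$. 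That constant must be beaten by the only boundary resource available, namely $|q|\,|u(0)|^2$ --- impossible whenever $|q|<2$, while the theorem is claimed for every $q<0$.

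What could rescue your argument is the positive bulk part $P:=\max\{-\tfrac12 a''',0\}$, which you discard: since $\int_\R a'''\,dx=0$ one has $\int P=\int N$, so on the dangerous near--constant test functions the two bulk contributions cancel exactly and the surviving margin is precisely $|q|\,|u(0)|^2>0$. Exploiting this, however, amounts to proving coercivity of the full quadratic form $2\int a'|u_x|^2+|q||u(0)|^2-\tfrac12\int a'''|u|^2$, i.e.\ absence of negative spectrum and of a zero resonance for $-2\partial_x a'\partial_x-\tfrac12 a'''+|q|\delta$ --- exactly the ``insidious'' positivity issue flagged in the introduction. The paper sidesteps it by taking the exponential weight $\psi_A$ with $\psi_A'=\zeta_A^2$: after the substitution $w=\zeta_A u$, the identity of Lemma \ref{lem:lemma1} turns the commutator into the manifestly nonnegative form $\langle (-\partial_x^2+\tfrac{|q|}{2}\delta)w,w\rangle$ plus a compactly supported error of relative size $O(1/A)$ (controlled via \eqref{232}), so that no Hardy absorption, and no smallness relative to $|q|$, is ever required. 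Either adopt that conjugation, or supply the missing coercivity proof for your polynomial weight.
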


\section{Proof of Theorem  \ref{thm:small en} } \label{sec:thm:small en}

\subsection{Notation and coordinates} \label{section:set up}

We have the following  ansatz, which is an elementary consequence of the Implicit Function Theorem.
\begin{lemma}\label{lem:decomposition}
There exist $c _0 >0$ and $C>0$ s.t.\ for all $u \in H^1$   with $\|u\|_{H^1}<c  _0 $, there exists a unique  pair $(z,\xi )\in   \C \times  P_c H^1 $
s.t.
\begin{equation}\label{eq:decomposition1}
\begin{aligned} &
u= Q[z]+\xi  \text{ with }  |z |+\|\xi \|_{H^1}\le C \|u\|_{H^1} .
\end{aligned}
\end{equation}
 The map  $u \to (z,\xi )$  is in $C^1 (D _{H^1}(0, c _0 ),
 \C   \times  H^1 )$.
\end{lemma}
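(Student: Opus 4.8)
\noindent\textbf{Proof plan.}
The plan is to realize the decomposition as the solution set of a finite-dimensional equation and invoke the Implicit Function Theorem. Since, by the formula for $P_c$ recalled in Section~\ref{sec:introduction}, the condition $\xi\in P_cH^1$ is equivalent to the two real scalar equations $\langle\xi,\varphi\rangle=\langle\xi,\im\varphi\rangle=0$, I would define, identifying $z$ with $(\Re z,\Im z)\in\R^2$,
\begin{equation*}
F:D_\C(0,a_0)\times H^1\longrightarrow\R^2,\qquad F(z,u):=\bigl(\langle u-Q[z],\varphi\rangle,\ \langle u-Q[z],\im\varphi\rangle\bigr).
\end{equation*}
By Proposition~\ref{prop:bddst} the map $z\mapsto Q[z]$ is $C^1$ from $D_\C(0,a_0)$ into $H^1_{\gamma_0}\hookrightarrow L^2$, the pairings against $\varphi$ and $\im\varphi$ are bounded real-linear functionals on $L^2$, and the $u$-dependence is affine; hence $F$ is of class $C^1$. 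Also $Q[0]=0$ by \eqref{prop:bddst1} (or by the gauge property \eqref{Q:gauge}), so $F(0,0)=0$.

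Next I would check that the partial differential $D_zF(0,0)$ is invertible. Using $D_1Q[0]=\varphi$ and $D_2Q[0]=\im\varphi$ from \eqref{prop:bddst1}, one computes $D_zF(0,0)(a,b)=-\bigl(\langle a\varphi+b\im\varphi,\varphi\rangle,\ \langle a\varphi+b\im\varphi,\im\varphi\rangle\bigr)$, i.e.\ minus the Gram matrix of $\{\varphi,\im\varphi\}$ with respect to the real inner product $\langle\cdot,\cdot\rangle$ of \eqref{eq:bilf}; since $\varphi$ and $\im\varphi$ are real-linearly independent in $L^2(\R,\C)$, this Gram matrix is positive definite, hence invertible. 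The Implicit Function Theorem then provides $c_0>0$, $r\in(0,a_0)$ and a map $z(\cdot)\in C^1\bigl(D_{H^1}(0,c_0);D_\C(0,r)\bigr)$ with $z(0)=0$ such that, for $\|u\|_{H^1}<c_0$, the equation $F(z,u)=0$ has $z(u)$ as its only solution with $|z|<r$. Putting $\xi(u):=u-Q[z(u)]$ gives $\xi(u)\in P_cH^1$ and $u=Q[z(u)]+\xi(u)$, with $u\mapsto(z(u),\xi(u))$ of class $C^1$ because $Q$ and $z(\cdot)$ are.

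For the quantitative bound, since $z(0)=0$ and $Dz$ is continuous, restricting to a smaller ball gives $|z(u)|\le C\|u\|_{H^1}$; then, using $\|Q[z]\|_{H^1}\le\|Q[z]-z\varphi\|_{H^1_{\gamma_0}}+|z|\,\|\varphi\|_{H^1}\lesssim|z|^{2p+1}+|z|$ from \eqref{prop:bddst1}, one obtains $\|\xi(u)\|_{H^1}\le\|u\|_{H^1}+\|Q[z(u)]\|_{H^1}\le C\|u\|_{H^1}$. Finally, to upgrade the local uniqueness of the Implicit Function Theorem to uniqueness among all pairs $(z,\xi)\in\C\times P_cH^1$ obeying $|z|+\|\xi\|_{H^1}\le C\|u\|_{H^1}$, note that such a $z$ satisfies $|z|\le Cc_0$, so after a further shrinking of $c_0$ so that $Cc_0<r$ we have $|z|<r$ and $F(z,u)=0$, whence $z=z(u)$ and $\xi=u-Q[z]=\xi(u)$.

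I do not anticipate a genuine obstacle here: the linearized operator is explicitly invertible thanks to $D_1Q[0]=\varphi$, $D_2Q[0]=\im\varphi$, so this is the routine linearize-and-invert argument. The only step demanding a little care is matching up the smallness constants $c_0$, $r$ and $C$ so that the global-looking uniqueness assertion genuinely reduces to the local uniqueness supplied by the Implicit Function Theorem.
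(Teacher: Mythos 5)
Your proposal is correct and follows essentially the same route as the paper: the paper defines the same map $F(z,u)=\bigl(\langle u-Q[z],\varphi\rangle,\langle u-Q[z],\im\varphi\rangle\bigr)$, notes via Proposition \ref{prop:bddst} that $F$ is $C^1$ with $\partial F/\partial(z_R,z_I)|_{(0,0)}=-I$, and concludes by the Implicit Function Theorem. Your version merely spells out the quantitative bound and the uniqueness bookkeeping that the paper leaves implicit.
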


\begin{proof}
Set
\begin{align*}
F(z,u):=\begin{pmatrix} \<u-Q[z],\varphi\>\\ \<u-Q[z],\im \varphi\> \end{pmatrix}.
\end{align*}
Then, by Proposition \ref{prop:bddst}, we see that $F\in C^1(D_{\C}(0,a_0)\times H^1;\R^2)$ and moreover
\begin{align}\label{eq:decomposition2}
\left.\frac{\partial F}{\partial(z_R,z_I)}\right|_{(z,u)=(0,0)}=-\begin{pmatrix}1 & 0\\ 0 & 1 \end{pmatrix}.
\end{align}
Therefore, by implicit function theorem, we have the conclusion.
\end{proof}

%
%

%
%
%
%

For $z,w\in \C$, we will use the notation
\begin{align*}
DQ[z]w:=\left.\frac{d}{d \varepsilon}\right|_{\varepsilon=0}Q[z+\varepsilon w].
\end{align*}
Notice that by $Q[e^{\im \theta}z]=e^{\im \theta}Q[z]$, we have
\begin{align*}
\im Q[z]=\left.\frac{d}{d \varepsilon}\right|_{\varepsilon=0}e^{\im \varepsilon}Q[ z]=\left.\frac{d}{d \varepsilon}\right|_{\varepsilon=0}Q[e^{\im \varepsilon} z]=\left.\frac{d}{d \varepsilon}\right|_{\varepsilon=0}Q[ z+\varepsilon \im z]=DQ[z]\im z.
\end{align*}
Further, for $w\in\C$, we set
\begin{align*}
f(w):=g(|w|^2)w.
\end{align*}

The well posedness of problem \eqref{1} in $H^1(\R)$  is considered by
 Goodman \textit{et al.}  \cite{GHW},
 Fukuizumi \textit{et al. } \cite{ozawa}  and \cite{adami}. The energy and mass conservation
 imply the global well posedness of our small solutions, with representation \eqref{eq:small en1} valid  for all times
along with the   bound \eqref{eq:small en2}.
So we can write the equation  \eqref{1} in terms of the ansatz  \eqref{eq:small en1}, or \eqref{eq:decomposition1}, and obtain the system
\begin{align}\label{11}
 & \< \im DQ(\dot z +\im E z), \im^{j-1} \varphi \>  =\< f(\xi)+\tilde f(z,\xi),\im^{j-1} \varphi \> \text{ for }j=1,2,\\&
\im \dot \xi =H_1\xi+f(\xi)+\tilde f(z,\xi) - \im DQ(\dot z +\im E z) \label{12},
\end{align}
where
\begin{align}\label{21}
\tilde f(z,\xi):=f(Q[z]+\xi)-f(\xi)-f(Q[z])
\end{align}

 In order to prove the last two points of Theorem \ref {thm:small en}, that is estimate \eqref{eq:small en2} and the limit \eqref{eq:small en22}, we will use
  the method  considered by  Kowalczyk \textit{et al.}  in \cite {KMM2} and  in  their very recent paper \cite{KMM3}.

  \subsection{The commutator method} \label{sec:commutator}

   Following  \cite{KMM3},
we introduce an even smooth  function $\chi :\R \to [-1,1]$ s.t.
\begin{align}\label{chi}
 &   \chi =1 \text{ in } [-1,1],  \   \chi =0 \text{ in } \R \setminus [-2,2], \chi '\le 0  \text{ in } \R_+. \end{align}
For $A\gg 1$  large enough which will be fixed later, we set
\begin{align}\label{zeta}
 &    \zeta _A (x):=\exp   \(  - \frac{|x|}{A}   \(1-\chi (x)\)   \)   \text{  and }  \psi _A(x) =\int _0^x  \zeta _A ^2(t)dt.
\end{align}
One can easily verify
\begin{align}\label{zetacomp}
e^{-\frac{|x|}{A}}\leq \zeta_A(x)\leq 2e^{-\frac{|x|}{A}}\text{ for }A\geq 4.
\end{align}
 To each function $\xi \in H^1 (\R )$ we can associate
\begin{align}\label{23}
 &  w:=\zeta  _A\xi . \end{align}
 Notice that  there exist  fixed constants $C$ and $A_0$ s.t.
 \begin{align}\label{231}
   \< (-\partial _x^2 +\delta ) w,w \>  \le C   \|  \xi \| ^2 _{H^1}  \text{  for all $A\ge A_0$.} \end{align}
Also, we have
\begin{align}\label{232}
\|w'\|_{L^2}^2+\|\<x\>^{-2}w\|_{L^2}^2\leq C \<(-\partial_x^2+ \delta)w,w\>,
\end{align}
where $\<x\>:=(1+|x|^2)^{1/2}$ and $C=12$.
Indeed, it suffices to bound the second term. Since
\begin{align*}
w(x)=w(0)+\int_0^x w'(s)\,ds,
\end{align*}
from H\"older inequality we have
\begin{align}\label{weightbound}
|w(x)|\leq |w(0)|+|x|^{1/2} \|w'\|_{L^2}\leq 2\<x\>^{1/2} \<(-\partial_x^2+ \delta)w,w\>^{\frac12}.
\end{align}
Thus, we have
\begin{align*}
\| \<x\>^{-2}w(x)\|_{L^2}^2 
\leq 4\int_\R \<x\>^{-3}\,dx \<(-\partial_x^2+ \delta)w,w\>\leq 12 \<(-\partial_x^2+ \delta)w,w\>.
\end{align*}
 We  consider  now  the
quadratic form
\begin{align}\label{Quadr}
 &   \mathcal{J}(\xi ) := 2^{-1} \< \im \xi ,  \( \frac{ \psi _A ' }{2}+ \psi _A \partial _x\)\xi \> .
\end{align}
By the well posedness of \eqref{1} we can consider
\begin{align*} \xi \in C^0 (\R , D(H_1)) \cap C^1(\R , L^2(\R, \C))  \subset  C^0 (\R , H^1(\R, \C)) \cap C^1(\R , L^2(\R, \C)) =:Y .
\end{align*}
We claim that  $ \mathcal{J}(\xi ) \in  C^1(\R , \R )$  with
\begin{align} \label{eq:Jxi}
\frac{d}{dt}\mathcal{J}(\xi )=\<\im \dot \xi, \( \frac{ \psi _A ' }{2}+ \psi _A \partial _x\) \xi\> .
\end{align}
Indeed we can consider  a sequence $\{  \xi  _n \} $ in  $  C^0 (\R , H^2(\R, \C)) \cap C^1(\R ,H^2(\R, \C))$ converging to $\xi $ in  $Y$ uniformly for $t$ on  compact sets.
The   functions  $ \mathcal{J}(\xi  _n)$ belong to $   C^1(\R , \R )$ and furthermore their derivatives satisfy \eqref{eq:Jxi} with $\xi $ replaced  by $\xi _n$.  From this formula
we derive that  the sequence $\{ \frac{d}{dt}\mathcal{J}(\xi _n ) \}$ converges uniformly on compact sets to the r.h.s. of  \eqref{eq:Jxi}. Since  $ \mathcal{J}(\xi  _n) \xrightarrow {n\to \infty }  \mathcal{J}(\xi   ) $
uniformly on compact sets,  we conclude that  $ \mathcal{J}(\xi ) \in  C^1(\R , \R )$  and that formula \eqref{eq:Jxi} is correct.

\noindent From  \eqref{eq:Jxi} we obtain
\begin{align}
  \frac{d}{dt}\mathcal{J}(\xi )  &+  \<\im DQ(\dot z +\im E z) , \( \frac{\psi _A ' }{2}+\psi _A \partial _x\) \xi   \>  \label{eq:virial2}  \\&
   =\<H_1\xi , \( \frac{ \psi _A ' }{2}+ \psi _A \partial _x\) \xi    \> + \< f(\xi),  \( \frac{ \psi _A ' }{2}+ \psi _A \partial _x\) \xi  \> +   \< \tilde f(z,\xi),  \( \frac{ \psi _A ' }{2}+ \psi _A\partial _x\) \xi   \> .\nonumber
\end{align}
The main result of this section is the following.

\begin{proposition} \label{prop:virial} There exist values $1\gg a _0>0$ and  $A\gg 1$   s.t. for $\xi\in P_c H^1$ and   $ |z| + \| \xi \| _{H^1}<a _0  $ we have
 \begin{equation}\label{prop:virial1}
\begin{aligned}
&
\text{r.h.s.\ of \eqref{eq:virial2}}
  \ge  \frac{1}{12}  \(\|w'\|_{L^2}+\|\<x\>^{-2}w\|_{L^2}^2\) \text{  for $w= \zeta _A \xi$.}
\end{aligned}
\end{equation}
\end{proposition}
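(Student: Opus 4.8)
The plan is to prove \eqref{prop:virial1} by computing the linear term $\<H_1\xi,(\tfrac{\psi_A'}2+\psi_A\partial_x)\xi\>$ in the right--hand side of \eqref{eq:virial2} \emph{exactly}, extracting from it a positive lower bound by using the hypothesis $\xi\in P_cH^1$, and then absorbing the two nonlinear terms as errors once $A$ has been fixed large and $a_0$ taken small (depending on $A$). Write $S:=\tfrac{\psi_A'}2+\psi_A\partial_x$ and $w=\zeta_A\xi$. Since $\xi\in D(H_1)$, $H_1\xi=-\xi''$ as an $L^2$ function on $\R\setminus\{0\}$, and I would integrate by parts on $(-\infty,0)$ and $(0,\infty)$ separately, using $\psi_A(0)=0$, $\psi_A'(0)=\zeta_A^2(0)=1$, the exponential decay of $\psi_A',\psi_A'',\psi_A'''$, and — crucially — the jump relation $\partial_x\xi(0^+)-\partial_x\xi(0^-)=-\xi(0)$ from \eqref{domain}, which is where the $\delta$--interaction enters. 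The result is the identity
\begin{equation*}
\<H_1\xi,S\xi\>=\int_\R\zeta_A^2|\xi'|^2\,dx-\frac14\int_\R\psi_A'''|\xi|^2\,dx-\frac12|\xi(0)|^2 .
\end{equation*}
Substituting $\xi=w/\zeta_A$ (so $\xi(0)=w(0)$, $\int\zeta_A^2|\xi'|^2=\|w'\|_{L^2}^2+\int\tfrac{\zeta_A''}{\zeta_A}|w|^2$ and $\psi_A'''|\xi|^2=\tfrac{(\zeta_A^2)''}{\zeta_A^2}|w|^2$) and collecting the weight factors into $(\log\zeta_A)''$ gives
\begin{equation*}
\<H_1\xi,S\xi\>=\|w'\|_{L^2}^2-\frac12|w(0)|^2+\frac12\int_\R(\log\zeta_A)''\,|w|^2\,dx,
\end{equation*}
and since $(\log\zeta_A)''$ is supported in $1\le|x|\le2$ and is $O(A^{-1})$, the last integral is $\le\tfrac{C}{A}\|\<x\>^{-2}w\|_{L^2}^2$.

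The heart of the proof is the negative term $-\tfrac12|w(0)|^2$: this is the signature of the \emph{attractive} $\delta$--potential, it has no positive commutator counterpart, and it can only be tamed by using that $\xi$ lies in the continuous spectral subspace. Decompose $w=v+r$, where $v=P_cw\in P_cH^1$ and $r$ is the $L^2$--projection of $w$ onto $\mathrm{Sp}(\varphi)$. From the orthogonality $\<\xi,\varphi\>=\<\xi,\im\varphi\>=0$ one gets $\<w,\varphi\>=\<\xi,(\zeta_A-1)\varphi\>=\bigl\langle\,\<x\>^{-2}w,\,\<x\>^2\tfrac{\zeta_A-1}{\zeta_A}\varphi\,\bigr\rangle$, and since $\varphi$ decays exponentially while $\bigl|\tfrac{\zeta_A-1}{\zeta_A}\bigr|\lesssim\tfrac{|x|}{A}e^{|x|/A}$ on $\{\zeta_A\ne1\}$, the second factor has $L^2$--norm $O(A^{-1})$; hence $\|r\|_{H^1}\le\tfrac{C}{A}\|\<x\>^{-2}w\|_{L^2}$. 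On the other hand $\sigma(H_1|_{L^2_c})\subset[0,\infty)$, so the quadratic form of $H_1=-\partial_x^2-\delta$ is nonnegative on $P_cH^1$, i.e.\ $|v(0)|^2\le\|v'\|_{L^2}^2$. Therefore $|w(0)|\le|v(0)|+|r(0)|\le\|v'\|_{L^2}+\tfrac CA\|\<x\>^{-2}w\|_{L^2}\le\|w'\|_{L^2}+\tfrac CA\|\<x\>^{-2}w\|_{L^2}$, and so
\begin{equation*}
\<H_1\xi,S\xi\>\ \ge\ \Bigl(\tfrac12-\tfrac CA\Bigr)\|w'\|_{L^2}^2-\tfrac CA\|\<x\>^{-2}w\|_{L^2}^2 .
\end{equation*}
Feeding this bound on $|w(0)|^2$ back into \eqref{232} gives $\|\<x\>^{-2}w\|_{L^2}^2\le C\|w'\|_{L^2}^2$ for $A$ large, whence $\<H_1\xi,S\xi\>\ge c_0\bigl(\|w'\|_{L^2}^2+\|\<x\>^{-2}w\|_{L^2}^2\bigr)$ with a fixed $c_0>0$; I would fix $A$ large at this stage (a careful bookkeeping of constants, together with the $12$ in \eqref{232}, can be arranged so that what ultimately survives is $\ge\tfrac1{12}$).

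It remains to treat the two nonlinear terms in \eqref{eq:virial2}. For $\<f(\xi),S\xi\>$, integrating by parts in the $\psi_A\partial_x$--part using $\partial_x|\xi|^2=2\Re(\xi\bar\xi')$ and $G'=g$ produces the spatially localized identity $\<f(\xi),S\xi\>=\tfrac12\int_\R\zeta_A^2\bigl(g(|\xi|^2)|\xi|^2-G(|\xi|^2)\bigr)\,dx$; by \eqref{nonlinearity}, \eqref{eq:G} and $\|\xi\|_{L^\infty}\lesssim\|\xi\|_{H^1}<a_0$ its modulus is $\le C\|\xi\|_{L^\infty}^{2p}\int\zeta_A^2|\xi|^2$, which by Sobolev embedding is $\le C(A)\,a_0^{2p}\bigl(\|w'\|_{L^2}^2+\|\<x\>^{-2}w\|_{L^2}^2\bigr)$. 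For $\<\tilde f(z,\xi),S\xi\>$, one uses \eqref{21} and Proposition \ref{prop:bddst}: every monomial of $\tilde f(z,\xi)$ carries at least one factor $Q[z]$ with $\|Q[z]\|_{H^1_{\gamma_0}}\lesssim|z|<a_0$, so, $Q[z]$ being exponentially localized, this pairing is $\le C(A)\,a_0^{2p}\|\xi\|_{H^1_{-\gamma}}^2$ for any fixed $\gamma\in(0,\gamma_0)$, and $\|\xi\|_{H^1_{-\gamma}}^2\lesssim\|w'\|_{L^2}^2+\|\<x\>^{-2}w\|_{L^2}^2$ once $A>1/\gamma$. Choosing $a_0$ small depending on the already--fixed $A$ renders both errors absorbable into $c_0\bigl(\|w'\|_{L^2}^2+\|\<x\>^{-2}w\|_{L^2}^2\bigr)$, which yields \eqref{prop:virial1}.

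The main obstacle is the second paragraph: the attractive sign of the potential makes the commutator indefinite through the boundary term $-\tfrac12|w(0)|^2$, and the only way to beat it is to exploit $\xi\in P_cH^1$ via the non--negativity of $H_1$ on the continuous spectral subspace; making the surviving constant genuinely positive requires being careful that $w=\zeta_A\xi$ is only \emph{approximately} orthogonal to $\varphi$, the error being $O(A^{-1})$ and hence harmless once $A$ is chosen large and \eqref{232} is used to pass from $|w(0)|^2$ back to $\|w'\|_{L^2}^2+\|\<x\>^{-2}w\|_{L^2}^2$. Everything else is routine provided one keeps track of the $A$--dependence when finally fixing $a_0$.
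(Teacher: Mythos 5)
Your treatment of the linear term and of the $\tilde f(z,\xi)$ term is essentially the paper's argument in different packaging: the paper records the exact identity $\<  ( \frac{\psi ' _A}{2}+\psi _A \partial _x ) \xi , H_1\xi  \> = \<H_{1/2}w,w\>+\frac{1}{2A}\<Vw,w\>$ (Lemma \ref{lem:lemma1}) and then splits $H_{1/2}=\frac14(-\partial_x^2+\delta)+\frac34 H_1$, so that the repulsive operator $-\partial_x^2+\delta$ carries the positivity while $\<H_1w,w\>$ is bounded below by $-CA^{-2}\<(-\partial_x^2+\delta)w,w\>$ using exactly your observation that $w=\zeta_A\xi$ is orthogonal to $\varphi$ up to an $O(A^{-1})$ error; your direct estimate of $-\frac12|w(0)|^2$ via $|v(0)|^2\le\|v'\|_{L^2}^2$ on $P_cH^1$ is an equivalent route. (Two minor points: $\|v'\|_{L^2}\le\|w'\|_{L^2}$ needs the extra $O(A^{-1})$ correction from the projection, and the exponent in the $\tilde f$ bound is $r=\min(1,2p)$ from Lemma \ref{lem:lipsc0}, not $2p$; neither affects the argument.)

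The genuine gap is in the pure nonlinear term $\<f(\xi),S\xi\>$. Your identity $\<f(\xi),S\xi\>=\frac12\int_\R\zeta_A^2\bigl(g(|\xi|^2)|\xi|^2-G(|\xi|^2)\bigr)dx$ and the bound by $C\int_\R\zeta_A^2|\xi|^{2(p+1)}dx$ are correct, but the final step, namely $\|\xi\|_{L^\infty}^{2p}\int_\R\zeta_A^2|\xi|^2\,dx\le C(A)a_0^{2p}\bigl(\|w'\|_{L^2}^2+\|\<x\>^{-2}w\|_{L^2}^2\bigr)$ ``by Sobolev embedding,'' is false: $\int_\R\zeta_A^2|\xi|^2dx=\|w\|_{L^2}^2$ is a global quantity that the local norm $\|w\|_X$ does not control. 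For a counterexample take $\xi_n$ real, equal to $e^{(|x|-n)/A}$ for $|x|\le n$, equal to $1$ for $n\le|x|\le 2n$, then decaying; $w_n=\zeta_A\xi_n$ is essentially the constant $e^{-n/A}$ on $[-n,n]$, so $\|w_n\|_{L^2}^2\sim n\,e^{-2n/A}$ while $\|w_n'\|_{L^2}^2+\|\<x\>^{-2}w_n\|_{L^2}^2\sim C_A e^{-2n/A}$, and the ratio diverges as $n\to\infty$; projecting onto $P_c$ perturbs $\xi_n$ only by $O(e^{-n/A})\varphi$, and since both sides are homogeneous of degree two, rescaling to enforce $\|\xi_n\|_{H^1}<a_0$ does not help. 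The resulting error would only be additive ($\lesssim a_0^{2p+2}$), not absorbable as $\delta_0\|w\|_X^2$, and the integrated estimate in Section \ref{sec:estimates} would not close. This is precisely the difficulty Lemma \ref{lem:errmain5} is built to overcome: one must keep the full power $|\xi|^{2(p+1)}$, peel off only a fractional power $q=2p/3$ into $\|\xi\|_{H^1}^q$, and then prove $\int_\R\zeta_A^2|\xi|^{2(p+1)-q}dx\lesssim\|w'\|_{L^2}^2$ by the Kowalczyk--Martel--Mu\~{n}oz integration by parts against the exponential weight, dropping a negative boundary term at $x=0$, applying Cauchy--Schwarz, and absorbing the resulting factor under the smallness condition $A^2a_0^{4p/3}\ll1$. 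Your proposal is missing this step, and without it (or a substitute) the virial inequality \eqref{prop:virial1} is not established.
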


The rest of this section is devoted to the proof of Proposition \ref{prop:virial}.

\noindent The key and single most important   term is the quadratic form singled out in the   following  lemma, see \cite{KMM2,KMM3}.

\begin{lemma}\label{lem:lemma1}  For $w=\zeta _A\xi $ we have the equality \begin{align}\label{Quadrmain1}
& \<  \( \frac{\psi ' _A}{2}+\psi _A \partial _x\) \xi , H_1\xi  \> =  \<   H _{\frac{1}{2}}w,w    \> + \frac{1}{2A}\< V  w   ,w   \> \text{ with }  V(x):= \chi ^{\prime\prime}(x) \ |x| + 2 \chi ^{\prime\prime}(x) \ \frac{x}{|x|}.
\end{align}

\end{lemma}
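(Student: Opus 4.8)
The plan is to prove the identity \eqref{Quadrmain1} by a direct but careful integration-by-parts computation, treating the $\delta$-interaction at $x=0$ with due attention to the jump condition in \eqref{domain}. First I would split the left-hand side into the free part $-\partial_x^2$ and the potential part $-\delta(x)$. For the potential part, since $H_1 = -\partial_x^2 - \delta(x)$, the contribution is $-\langle (\frac{\psi_A'}{2}+\psi_A\partial_x)\xi, \delta(x)\xi\rangle$; because $\psi_A$ is odd, $\psi_A(0)=0$, so the $\psi_A\partial_x$ term drops and what survives is $-\frac12\psi_A'(0)|\xi(0)|^2 = -\frac12\zeta_A^2(0)|\xi(0)|^2 = -\frac12|\xi(0)|^2$ (using $\zeta_A(0)=1$). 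On the right-hand side, $H_{1/2}w = -\partial_x^2 w - \frac12\delta(x)w$ contributes $-\frac12|w(0)|^2 = -\frac12 \zeta_A(0)^2|\xi(0)|^2 = -\frac12|\xi(0)|^2$ to $\langle H_{1/2}w,w\rangle$, since $\chi''$ is supported away from $0$ the term $\frac{1}{2A}\langle Vw,w\rangle$ has no $\delta$-like contribution there. So the $\delta$-potential pieces match exactly, and the problem reduces to the purely differential identity for the free operator.

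The core is therefore to show
\[
\Big\langle \Big( \tfrac{\psi_A'}{2}+\psi_A\partial_x\Big)\xi,\ -\partial_x^2\xi\Big\rangle = \langle -\partial_x^2 w, w\rangle + \tfrac{1}{2A}\langle V_0 w,w\rangle - (\text{matched }\delta\text{ terms})
\]
where $V_0$ is the smooth part. Here I would integrate by parts on $\R\setminus\{0\}$, using the jump relation $\partial_x\xi(0^+)-\partial_x\xi(0^-) = -\xi(0)$ to collect the boundary contributions at $0$; these boundary terms are precisely what reconstitutes the $\delta$ in $H_{1/2}$ on the right. The standard Morawetz/virial computation gives, for a smooth weight $\phi$ with $\phi = \psi_A$,
\[
\Re\int \Big(\tfrac{\phi'}{2}\bar\xi + \phi\,\overline{\partial_x\xi}\Big)(-\partial_x^2\xi)\,dx = \int \phi'|\partial_x\xi|^2\,dx - \tfrac14\int \phi'''|\xi|^2\,dx,
\]
modulo boundary terms at $0$. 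Since $\phi' = \psi_A' = \zeta_A^2$, the first term is $\int \zeta_A^2|\partial_x\xi|^2$, which is exactly $\|\partial_x w\|_{L^2}^2$ up to the commutator $\|(\partial_x\zeta_A)\xi\|^2$ and a cross term; tracking these and comparing with $\langle -\partial_x^2 w,w\rangle = \|\partial_x w\|^2_{L^2} = \|\zeta_A\partial_x\xi + (\partial_x\zeta_A)\xi\|^2$ forces the remaining weight terms to combine into $-\frac14\int (\zeta_A^2)''|\xi|^2$ plus exactly $\frac{1}{2A}\langle Vw,w\rangle$. The role of the precise form $V = \chi''|x| + 2\chi''\,\mathrm{sgn}(x)$ is that when one expands $\zeta_A(x) = \exp(-\frac{|x|}{A}(1-\chi(x)))$ and computes $(\zeta_A^2)''$ and the commutator terms, the $A^{-1}$-order remainder that is not already absorbed is exactly $\frac{1}{2A}V\zeta_A^2$; this is a bookkeeping identity one verifies by differentiating $\zeta_A$ twice.

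The main obstacle I anticipate is the care needed at $x=0$: $\zeta_A$ is only Lipschitz there (because of the $|x|$), $\psi_A$ is $C^1$ but $\psi_A''$ has a possible issue, and $\xi\in D(H_1)$ has a genuine kink, so every integration by parts must be done separately on $(-\infty,0)$ and $(0,\infty)$ and the boundary terms assembled. One must check that the distributional derivatives $\zeta_A'$, $\psi_A'''$ etc. produce no spurious atom at $0$ — in fact $\zeta_A$ is smooth near $0$ since $\chi\equiv 1$ there forces $1-\chi(x)\equiv 0$ on $[-1,1]$, so $\zeta_A\equiv 1$ near $0$ and all the weight functions are smooth (indeed constant) in a neighborhood of the origin; this is precisely why the $\delta$ interaction decouples cleanly and why $V$ is supported in $\{1\le|x|\le 2\}$. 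The only other delicate point is verifying the algebraic identity for $(\zeta_A^2)'' - 2A^{-1}$(lower order) $= A^{-1}V\zeta_A^2 + A^{-2}(\cdots)$; since we only claim an exact identity \eqref{Quadrmain1} (no error term), one should double-check whether higher-order-in-$A^{-1}$ terms are genuinely absent or have been folded into $V$, but given that $\chi' = 0$ outside $[-2,2]$ and $\zeta_A$ is exactly $e^{-|x|/A}$ there, the exponential structure makes the cross terms collapse and the claimed clean form is plausible.
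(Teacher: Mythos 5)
Your proposal follows essentially the same route as the paper: the standard virial integration by parts on $\R\setminus\{0\}$, with the jump condition $\xi'(0^+)-\xi'(0^-)=-\xi(0)$ producing the matching $-\frac12|\xi(0)|^2=-\frac12|w(0)|^2$ contributions (legitimate because $\zeta_A\equiv 1$ near the origin), followed by the substitution $w=\zeta_A\xi$ and the identity $\langle\psi_A'\xi',\xi'\rangle-\frac14\langle\psi_A'''\xi,\xi\rangle=\|w'\|_{L^2}^2+\frac12\langle(\frac{\zeta_A''}{\zeta_A}-(\frac{\zeta_A'}{\zeta_A})^2)w,w\rangle$. The one point you leave hedged resolves exactly as you suspect: the leftover weight is $\frac12(\log\zeta_A)''$, and since $\log\zeta_A=-\frac{|x|}{A}(1-\chi)$ is exactly linear in $A^{-1}$, the $A^{-2}$ cross terms cancel identically, yielding $\frac{1}{2A}V$ with $V=\chi''(x)|x|+2\chi'(x)\frac{x}{|x|}$ (the second term involves $\chi'$, not $\chi''$ as printed in the lemma's displayed statement, which is a typo the paper itself corrects in its own derivation).
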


\begin{proof}
   Integrating by parts, see Corollary 8.10 \cite{brezis},  we obtain
\begin{align}
& \<  \( \frac{\psi _A' }{2}+\psi _A\partial _x\) \xi  , H_1\xi  \> =  - \frac{1}{2}\Re \int _\R \psi _A' \xi \overline{\xi}^{\prime\prime} dx - \frac{1}{2}  \int _\R\psi _A (|\xi ' | ^2)' dx\nonumber\\&
= \int _\R \psi _A' |\xi ' | ^2 dx + \frac{1}{4} \int _\R\psi _A  ^{\prime\prime} (|\xi ' | ^2)' dx + \frac{1}{2} \Re \(  \psi _A  ^{\prime }  (0)  \xi (0) \(  \overline{\xi} '(0^+)- \overline{\xi} '(0^-)  \)  \)
\nonumber \\& =
 \<  \psi '_A \xi ' ,\xi ' \> - \frac{1}{4}\<  \psi  ^{\prime\prime\prime}_A \xi   ,\xi  \>  -\frac{1}{2}\<   \delta \xi   ,\xi   \> , \label{eq:lem1:1}
\end{align}
where we used ${\xi} '(0^+)-  {\xi} '(0^-)=-\xi (0)$, $\psi _A (0) =\psi _A ^{\prime\prime }(0)=0 $ and $ \psi _A ^{\prime }(0)=1 $.

\noindent For the first term in the r.h.s. of \eqref{eq:lem1:1}, we have
\begin{align*}
  \<  \psi ' _A \xi ' ,\xi ' \>  &= \<\zeta_A^2\(\frac{w}{\zeta_A}\)', \(\frac{w}{\zeta_A}\)'\>= \< w' - \frac{\zeta _A'}{\zeta _A} w ,w' - \frac{\zeta _A'}{\zeta _A} w \> \\&  =\< w'   ,w'     \>  +\<  \( \frac{\zeta _A'}{\zeta _A}\)^2 w ,w  \> -2 \< w'   ,  \frac{\zeta '_A}{\zeta_A} w \> \\& = \< w'   ,w'     \>  +  \< \(  \( \frac{\zeta '_A}{\zeta _A}\) ' + \( \frac{\zeta '_A}{\zeta _A}\) ^2 \)     w   ,w      \>  =  \< w'   ,w'     \>  +  \<   \frac{\zeta ^{\prime\prime} _A}{\zeta _A}      w   ,w      \> ,
\end{align*}
and for the second term we have
\begin{align*}
&  - \frac{1}{4}\<  \psi  ^{\prime\prime\prime}_A \xi   ,\xi   \> =- \frac{1}{4}\<   \frac{(\zeta ^2_A) ^{\prime\prime}}{\zeta ^2_A} w   ,w   \>  = -\frac{1}{2}\<  \( \frac{ \zeta ^{\prime\prime}_A}{\zeta  _A}  + \( \frac{ \zeta ^{\prime } _A}{\zeta  _A} \) ^2  \) w   ,w   \> .
\end{align*}
Summing up we obtain
\begin{align}\label{eq:lem1:2}
\<  \( \frac{\psi _A' }{2}+\psi _A\partial _x\) \xi  , H_1\xi  \> =\< H_{\frac12}w,w\>+\frac12\< \(\frac{\zeta_A''}{\zeta_A}-\(\frac{\zeta_A'}{\zeta_A}\)^2\)w,w\>.
\end{align}
Finally,  from
\begin{align*}
\zeta_A'&=\frac{1}{A}\(\chi'(x)|x|+\(\chi(x)-1\)\frac{x}{|x|}\)\zeta_A \text{  and }\\
\zeta_A''&=\frac{1}{A^2}\(\chi'(x)|x|+\(\chi(x)-1\)\frac{x}{|x|}\)^2\zeta_A+\frac{1}{A}\(\chi''(x)|x|+2\chi'(x)\frac{x}{|x|}\)\zeta_A,
\end{align*}
we conclude
\begin{align}\label{eq:lem1:3}
A\(\frac{ \zeta ^{\prime\prime}_A}{\zeta  _A}  - \( \frac{ \zeta ^{\prime } _A}{\zeta  _A} \) ^2\)=\chi''(x)|x|+2\chi'(x)\frac{x}{|x|}=V(x).
\end{align}
Substituting \eqref{eq:lem1:3} into \eqref{eq:lem1:2}  we obtain \eqref{Quadrmain1}.
\end{proof}

The main step in the proof of Proposition \ref{prop:virial}  is the following lemma. \begin{lemma} \label{lemcor:errmain1}
  There exist a  fixed constant  $A_0>1$     s.t. for $A\ge A_0$ we have
  \begin{align} \label{lemerrmain1} &
   \<  \( \frac{\psi ' _A}{2}+\psi _A \partial _x\) \xi , H_1\xi  \>   \ge  \frac{1}{5}  \< (-\partial ^2_x+\delta )w,w\> \text{ for  $w= \zeta _A \xi$  and all $\xi \in \mathcal{H}_c[0]$.}
\end{align}
\end{lemma}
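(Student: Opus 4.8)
The plan is to combine the algebraic identity of Lemma~\ref{lem:lemma1} with a coercivity estimate that is made available by the orthogonality built into $\xi\in\mathcal{H}_c[0]=L^2_c$. Since $H_{\frac12}=-\partial_x^2-\tfrac12\delta$ one has $\<H_{\frac12}w,w\>=\|w'\|_{L^2}^2-\tfrac12|w(0)|^2$ and $\<(-\partial_x^2+\delta)w,w\>=\|w'\|_{L^2}^2+|w(0)|^2$, so by Lemma~\ref{lem:lemma1} the inequality \eqref{lemerrmain1} is equivalent to
$$
\tfrac45\,\|w'\|_{L^2}^2-\tfrac7{10}\,|w(0)|^2+\tfrac1{2A}\<Vw,w\>\ \ge\ 0 .
$$
The last term is a harmless error: $V$ from \eqref{Quadrmain1} is bounded and supported in $\{1\le|x|\le2\}$ (because $\chi',\chi''$ vanish on $[-1,1]$ and outside $[-2,2]$), so \eqref{weightbound} gives $\bigl|\tfrac1{2A}\<Vw,w\>\bigr|\le \tfrac CA\<(-\partial_x^2+\delta)w,w\>=\tfrac CA(\|w'\|_{L^2}^2+|w(0)|^2)$ for a fixed $C$. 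Hence it suffices to show that the orthogonality forces $|w(0)|^2$ to be controlled by $\|w'\|_{L^2}^2$, with a constant safely below $8/7$.

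For that bound, observe that $\zeta_A(0)=1$, so $w(0)=\xi(0)$, and that $\xi\in L^2_c$ means $\int_\R\xi\varphi\,dx=0$, i.e.\ $\int_\R w\,\phi_A\,dx=0$ with $\phi_A:=\varphi/\zeta_A>0$. Inserting $w(x)=w(0)+\int_0^x w'(s)\,ds$ and solving for $w(0)$ gives $w(0)\int_\R\phi_A\,dx=-\int_\R\bigl(\int_0^x w'(s)\,ds\bigr)\phi_A(x)\,dx$; estimating $|\int_0^x w'|$ by $\int_{[0,x]}|w'|$, applying Tonelli and using that $\phi_A$ is even leads to $|w(0)|\int_\R\phi_A\,dx\le\int_\R|w'(s)|\,\Phi_A(|s|)\,ds$ with $\Phi_A(s):=\int_s^\infty\phi_A$, and then Cauchy--Schwarz yields
$$
|w(0)|^2\ \le\ \frac{2\int_0^\infty \Phi_A(s)^2\,ds}{\bigl(\int_\R\phi_A\,dx\bigr)^2}\;\|w'\|_{L^2}^2 .
$$
By \eqref{zetacomp} we have $\phi_A\to\varphi$ pointwise, with domination $\phi_A\lesssim e^{-|x|/4}$ for $A\ge4$, so the constant above converges as $A\to\infty$; using $\varphi=\tfrac1{\sqrt2}e^{-|x|/2}$ (hence $\int_\R\varphi=2\sqrt2$ and $\int_s^\infty\varphi=\sqrt2\,e^{-s/2}$) the limit is $\tfrac{2\cdot2}{8}=\tfrac12$. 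Thus $|w(0)|^2\le(\tfrac12+o_A(1))\|w'\|_{L^2}^2$ for $A$ large.

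Putting the pieces together, the left side of the displayed reformulation is bounded below by $\bigl(\tfrac45-\tfrac7{10}\cdot\tfrac12-o_A(1)\bigr)\|w'\|_{L^2}^2=\bigl(\tfrac9{20}-o_A(1)\bigr)\|w'\|_{L^2}^2\ge0$, which gives \eqref{lemerrmain1} for all $A\ge A_0$ with $A_0$ fixed and large enough.

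The step I expect to be the real obstacle is the coercivity bound $|w(0)|^2\lesssim\|w'\|_{L^2}^2$. It is genuinely false without the orthogonality — a wide, slowly varying bump has $w(0)\ne0$ but arbitrarily small $\|w'\|_{L^2}$ — so this is exactly where the hypothesis $\xi\in L^2_c$ must enter, and one has to check that trading the constraint $\xi\perp\varphi$ for the $A$-dependent constraint $w\perp\phi_A$ does not degrade the constant in the $A\to\infty$ limit. The comfortable numerical margin (the constant $\tfrac12$ against the $8/7$ one can afford) is precisely what makes the non-optimal value $1/5$ on the right of \eqref{lemerrmain1} costless; a careful bookkeeping of the explicit integrals above is all that is needed to turn this sketch into a proof.
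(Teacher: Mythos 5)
Your proof is correct, and it reaches the conclusion by a genuinely different route in the one step that matters. The paper also starts from Lemma \ref{lem:lemma1} and also treats the $\frac{1}{2A}\<Vw,w\>$ term as an $O(A^{-1})$ error via \eqref{weightbound}, but for the coercivity it splits $\<H_{\frac12}w,w\>=\frac14\<(-\partial_x^2+\delta)w,w\>+\frac34\<H_1w,w\>$ and then controls $\<H_1w,w\>$ spectrally: $\<H_1w,w\>=-\frac14\bigl|\int w\varphi\bigr|^2+\<H_1P_cw,P_cw\>\ge-\frac14\bigl|\int w\varphi\bigr|^2$, with $\int w\varphi=\int\xi(\zeta_A-1)\varphi=O(A^{-1})\<(-\partial_x^2+\delta)w,w\>^{1/2}$ since $\xi\perp\varphi$ exactly; this makes the negative contribution $O(A^{-2})$ and yields a lower bound with constant $\frac14-o_A(1)$, so no delicate numerology is needed. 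You instead write out the quadratic forms explicitly and reduce everything to the trace inequality $|w(0)|^2\le\bigl(\frac12+o_A(1)\bigr)\|w'\|_{L^2}^2$ under the constraint $\int w\,\varphi/\zeta_A=0$, proved by the fundamental theorem of calculus, Tonelli and Cauchy--Schwarz with the explicit profile of $\varphi$; your constants ($\frac45$ against $\frac7{10}\cdot\frac12$, leaving margin $\frac9{20}$) check out, as do the integrals $\int_\R\varphi=2\sqrt2$ and $2\int_0^\infty(\int_s^\infty\varphi)^2ds=4$. The trade-off is that the paper's argument is shorter and does not depend on the size of the margin (it would survive a target constant anywhere below $\frac14$), while yours is more elementary --- it avoids invoking the positivity of $H_1$ on $P_cL^2$ and the spectral decomposition, replacing them by a hand-computed Poincar\'e-type bound --- but succeeds only because the explicit constant $\frac12$ sits comfortably below the threshold $\frac87$ you can afford; you correctly identified this as the crux and verified it.
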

\proof  We use formula \eqref{Quadrmain1} singling out the the 1st term in the r.h.s. and writing it as
\begin{align*} &   \<  H_{\frac12} w,w    \>   =   \frac{1}{4}     \<   \(-\partial _x^2  + \delta \) w,w    \> + \frac{3}{4}  \<H_1w,w \>   .
\end{align*}
We now make the following claims.

\begin{claim} \label{lem:errorV}
  There exist  $A_0>0$ and $C_0>0$ s.t. for $A\ge A_0$ we have
  \begin{align} \label{eq:errorV1}
  \frac{1}{2A} \|  \<Vw,w \>  \|  &\le  \frac{C_0}{A} \< (-\partial ^2_x+\delta )w,w\> \text{ for all $w\in H^1$.}
\end{align}
\end{claim}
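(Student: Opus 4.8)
Here is the proof I would give for Claim \ref{lem:errorV}.

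The plan is to exploit that the weight $V$ from \eqref{Quadrmain1} is compactly supported away from the origin, so that $\<Vw,w\>$ only sees the $L^2$ mass of $w$ on a fixed annulus, and then to absorb that mass into $\<(-\partial_x^2+\delta)w,w\>$ by means of the already established inequality \eqref{232}. Crucially, the resulting constant will be independent of $A$, which is exactly what one needs since in the proof of Lemma \ref{lemcor:errmain1} this term is to be rendered negligible by sending $A\to\infty$.

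First I would note that, since $\chi\equiv 1$ on $[-1,1]$ and $\chi\equiv 0$ on $\R\setminus[-2,2]$, both $\chi'$ and $\chi''$ vanish on $(-1,1)$ and on $\R\setminus[-2,2]$; hence, recalling \eqref{Quadrmain1}--\eqref{eq:lem1:3}, $V$ is supported in the annulus $\{x:1\le|x|\le 2\}$, on which $|x|\le 2$ and $x/|x|=\pm1$ give $\|V\|_{L^\infty}\le 2\|\chi''\|_{L^\infty}+2\|\chi'\|_{L^\infty}=:C_\chi$, a constant depending only on the fixed cutoff $\chi$. Since $V$ is real-valued, this yields immediately $|\<Vw,w\>|\le C_\chi\int_{1\le|x|\le2}|w|^2\,dx$ for all $w\in H^1$.

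Second, I would bound $\int_{1\le|x|\le2}|w|^2\,dx$. On $\{|x|\le2\}$ one has $\<x\>^2=1+|x|^2\le 5$, so $1\le 25\<x\>^{-4}$ there, whence $\int_{1\le|x|\le2}|w|^2\,dx\le 25\,\|\<x\>^{-2}w\|_{L^2}^2$, and \eqref{232} estimates the right-hand side by $25\cdot 12\,\<(-\partial_x^2+\delta)w,w\>$. Combining with the previous step, $|\<Vw,w\>|\le 300\,C_\chi\,\<(-\partial_x^2+\delta)w,w\>$ for every $w\in H^1$; setting $C_0:=150\,C_\chi$ and, say, $A_0:=1$, we obtain \eqref{eq:errorV1} for all $A\ge A_0$ (indeed for every $A>0$).

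There is no real obstacle in this argument; the only point worth attention is that $C_0$ must not depend on $A$, which holds because neither $\|V\|_{L^\infty}$ nor the constant in \eqref{232} does. As a variant one could replace the use of \eqref{232} by the pointwise bound \eqref{weightbound} together with $|\{x:1\le|x|\le2\}|=2$ and $\<x\>\le\sqrt 5$ on the annulus, reaching the same conclusion with a slightly different numerical constant.
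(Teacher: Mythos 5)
Your proof is correct and is essentially the paper's argument: the paper simply integrates the pointwise bound \eqref{weightbound} against $|V(x)|$, using that $\int_\R|V(x)|\langle x\rangle\,dx<\infty$, which is precisely the ``variant'' you mention at the end, while your main route through the compact support of $V$ and \eqref{232} is just a repackaging of the same two ingredients. Both give a constant independent of $A$, which is the only point that matters.
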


\begin{claim} \label{lem:errorOrth}
  There exist  $A_0>1$ and $C_0>0$ s.t. for $A\ge A_0$ we have
  \begin{align} \label{eq:errorOrth1}
    \<H_1w,w \>   &\ge  - \frac{C_0}{A^2 } \< (-\partial ^2_x+\delta )w,w\> \text{ for all $w= \zeta _A \xi$  with $\xi \in L^2_c$.}
\end{align}
\end{claim}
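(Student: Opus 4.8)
The plan is to split $w$ according to the spectral decomposition \eqref{schopSPec1} and to show that, although $w=\zeta_A\xi$ need not lie in $L^2_c$, its component along the ground state $\varphi$ has size $O(A^{-1})$ as soon as $\xi\in L^2_c$; the claimed coercivity will then follow from the positivity of $H_1$ on $L^2_c$ together with \eqref{232}. Throughout one works with $w=\zeta_A\xi\in H^1$, so that $\<H_1w,w\>=\|w'\|_{L^2}^2-|w(0)|^2$ is well defined.

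\textbf{Step 1: reduction to a leakage estimate.} The first observation is that, by \eqref{schopSPec}, the self-adjoint operator $H_1$ has the single negative eigenvalue $-1/4$, with $\{\varphi,\im\varphi\}$ a real-orthonormal basis of $\text{Sp}(\varphi)$ (indeed $\|\varphi\|_{L^2}=1$ and $\<\varphi,\im\varphi\>=0$), while $H_1\ge0$ on $L^2_c$. Writing $w=\<w,\varphi\>\varphi+\<w,\im\varphi\>\im\varphi+P_cw$ and using that $H_1$ preserves this splitting, one gets
\begin{equation*}
\<H_1w,w\>=-\tfrac14\<w,\varphi\>^2-\tfrac14\<w,\im\varphi\>^2+\<H_1P_cw,P_cw\>\ge-\tfrac14\(\<w,\varphi\>^2+\<w,\im\varphi\>^2\).
\end{equation*}
So it will suffice to prove $|\<w,\varphi\>|+|\<w,\im\varphi\>|\le \tfrac{C}{A}\|\<x\>^{-2}w\|_{L^2}$ for $A$ large, and then invoke \eqref{232}.

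\textbf{Step 2: the leakage is $O(A^{-1})$.} The second (and main) step exploits the orthogonality $\<\xi,\varphi\>=\<\xi,\im\varphi\>=0$ coming from $\xi\in L^2_c$: since $\zeta_A$ is real-valued,
\begin{equation*}
\<w,\varphi\>=\<\zeta_A\xi,\varphi\>=\<\xi,\zeta_A\varphi\>=\<\xi,(\zeta_A-1)\varphi\>=\<w,\tfrac{\zeta_A-1}{\zeta_A}\varphi\>,
\end{equation*}
and likewise with $\im\varphi$ in place of $\varphi$. By \eqref{chi}--\eqref{zeta}, $\zeta_A-1$ vanishes on $[-1,1]$, while on $\{|x|\ge1\}$ one has $0<\zeta_A\le1$ and $\zeta_A^{-1}=e^{\frac{|x|}{A}(1-\chi(x))}\le e^{|x|/A}$, so from $e^t-1\le te^t$,
\begin{equation*}
\Big|\tfrac{\zeta_A-1}{\zeta_A}(x)\Big|=\zeta_A^{-1}(x)-1\le\tfrac{|x|}{A}\,e^{|x|/A},\qquad |x|\ge1 .
\end{equation*}
Since $|\varphi(x)|=2^{-1/2}e^{-|x|/2}$ and $A\ge4$, this gives $\big|\tfrac{\zeta_A-1}{\zeta_A}\varphi(x)\big|\le\tfrac{C}{A}|x|e^{-|x|/4}$ for $|x|\ge1$ and $=0$ for $|x|<1$, with the identical bound for $\im\varphi$; Cauchy--Schwarz then yields
\begin{equation*}
|\<w,\varphi\>|\le\tfrac{C}{A}\,\big\|\<x\>^2|x|e^{-|x|/4}\big\|_{L^2}\,\|\<x\>^{-2}w\|_{L^2}\le\tfrac{C'}{A}\|\<x\>^{-2}w\|_{L^2},
\end{equation*}
and the same for $\<w,\im\varphi\>$. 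Plugging this into Step 1 and applying \eqref{232},
\begin{equation*}
\<H_1w,w\>\ge-\tfrac{(C')^2}{2A^2}\|\<x\>^{-2}w\|_{L^2}^2\ge-\tfrac{C_0}{A^2}\<(-\partial_x^2+\delta)w,w\> ,
\end{equation*}
which is \eqref{eq:errorOrth1}; one may take $A_0=4$.

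\textbf{Main obstacle.} There is essentially a single point to get right: the localisation $\xi\mapsto\zeta_A\xi$ genuinely produces a component along the negative eigenfunction $\varphi$ of $H_1$, so $\<H_1w,w\>$ is not sign-definite and one cannot simply use $\xi\in L^2_c$. The gain of $A^{-2}$ comes entirely from the fact that $\zeta_A-1$ is supported away from the origin and is $O(A^{-1})$ precisely on the region where $\varphi$ has not yet become exponentially small — that is, from the competition between the $e^{|x|/A}$ growth of $\zeta_A^{-1}$ and the $e^{-|x|/2}$ decay of $\varphi$, which is exactly why $A$ must be taken large. Beyond this, the argument uses only Cauchy--Schwarz and the previously established \eqref{232}.
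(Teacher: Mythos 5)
Your proof is correct and follows essentially the same route as the paper: both split off the ground-state component of $w=\zeta_A\xi$, use $H_1\ge 0$ on $L^2_c$, and then estimate the leakage $\<w,\varphi\>$ by moving $\zeta_A$ onto $\varphi$, exploiting $\<\xi,\varphi\>=0$ and the bound $\zeta_A^{-1}-1\lesssim A^{-1}|x|e^{|x|/A}$ against the $e^{-|x|/2}$ decay of $\varphi$. The only (immaterial) difference is that you close the estimate by Cauchy--Schwarz against $\|\<x\>^{-2}w\|_{L^2}$ followed by \eqref{232}, whereas the paper uses the pointwise bound \eqref{weightbound} directly.
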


Let us assume Claims \ref{lem:errorV} and \ref{lem:errorOrth}. Then we conclude
\begin{align*}    \<  \( \frac{\psi ' _A}{2}+\psi _A \partial _x\) \xi , H_1\xi  \> &=  \<   H _{\frac{1}{2}}w,w    \> + \frac{1}{2A}\< V  w   ,w   \>   \\&=  \frac{1}{4}          \<   \(-\partial _x^2  + \delta \) w,w    \> + \frac{3}{4}  \<H_1w,w \> + \frac{1}{2A}\< V  w   ,w   \>    \\& \ge
 \(    \frac{1}{4}  -C_0\(\frac34 A^{-2} +A^{-1} \) \)      \<   \(-\partial _x^2  + \delta \) w,w    \>
\end{align*}
which yields immediately \eqref{lemerrmain1}. This, up to the proof of Claims  \ref{lem:errorV} and \ref{lem:errorOrth}, completes the proof of Lemma \ref{lemcor:errmain1}.  \qed

\textit{Proof of Claim  \ref{lem:errorV}}.
  By integrating
 \eqref{weightbound},
\begin{align*}
  \frac{1}{2A} \|  \<Vw,w \>  \|  &\le  \frac{2}{A}    \int _{\R}       |V(x)|   \<x\> dx        \< (-\partial ^2_x+\delta )w,w\> \text{ for all $w\in H^1$.}
\end{align*}
\qed

\textit{Proof of Claim  \ref{lem:errorOrth}}.
Since $w=\(\int_\R w\varphi_1\,dx \)\varphi_1 +P_c w$ and $\<H_1 P_c w,P_c w\>\geq 0$, we have
\begin{align*}
    \<H_1w,w \> = -\frac14 \left |  \int _\R w \varphi _1 dx  \right |   ^2   +\<H_1P_cw,P_cw\>  &\ge  - \frac{1}{4}   \left |  \int _\R w \varphi _1 dx  \right |   ^2    \text{ for all $w\in H^1 $.}
\end{align*}
On the other hand, for $w$ as in \eqref{eq:errorOrth1}  we have
\begin{align*}
\int _\R w \varphi _1 dx=\int_\R \xi \zeta_A \varphi_1\,dx=\int_\R \xi \(\zeta_A \varphi_1-\varphi_1\)\,dx=-\int_\R w \varphi_1\(\frac{1}{\zeta_A}-1\)\,dx.
\end{align*}
Since $e^{-\frac{|x|}{A}}\leq \zeta_A(x)\leq 1$ and $e^{|x|}-1\leq |x|e^{|x|}$ for all $x\in\R$, for $A\ge 4$
\begin{align*}
        \left |  \int _\R w \varphi _1 dx  \right |   &\le     \int _\R  |w |  \varphi _1  \(   \frac{1}{\zeta _A} -1\)    dx   \le   \frac{1}{\sqrt{2}}\int _\R  |w | e ^{-\frac{|x|}{2}}  \(  e ^{\frac{|x|}{A}} -1\)   dx\\&\le    \frac{1}{\sqrt{2} A}\int _\R  |w | e ^{-\frac{|x|}{2}   +  \frac{|x|}{A}   }  |x| dx   \le  \frac{1}{\sqrt{2} A}\int _\R  |w | e ^{-\frac{|x|}{4}   }  |x| dx.
\end{align*}
Furthermore, by \eqref{weightbound}
\begin{align*}
\frac{1}{\sqrt{2} A}\int _\R  |w | e ^{-\frac{|x|}{4}   }  |x| dx \le \frac{\sqrt 2}{A}\int_\R \<x\>^{3/2}e^{-\frac{|x|}{4}}\,dx \<(-\partial_x^2+\delta)w,w\>^{1/2}.
\end{align*}
This immediately leads to  the lower bound \eqref{eq:errorOrth1}.
\qed

%
%
By Lemma \ref{lemcor:errmain1} we have found a lower bound on the the 1st term in the r.h.s.\ of \eqref{eq:virial2}.
%
%
%
%
%
%
%

We now examine the contribution to  \eqref{prop:virial1} of the    term with $f(\xi)=g(|\xi|^2)\xi$.

\begin{lemma} \label{lem:errmain5}  For any  $\delta _0>0$  and   $A\gg 1$ there exists $a_0>0$    s.t. for $\| \xi \| _{H^1}\le a_0 $  we have
  \begin{align} \label{errmain51} &
 \left |  \<   \( \frac{ \psi _A ' }{2}+ \psi _A \partial _x\) \xi  ,f(\xi)   \>  \right |  \le \delta _0    \|w'\|_{L^2}^2 \text{ for  $w= \zeta _A \xi$ .}
\end{align}
\end{lemma}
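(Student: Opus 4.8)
The plan is to exploit that the nonlinearity $f(\xi)=g(|\xi|^2)\xi$ is a gradient, so that the transport part $\psi_A\partial_x$ of the multiplier can be integrated by parts. Using $\psi_A'=\zeta_A^2$, split
\begin{align*}
\Big\langle\Big(\tfrac{\psi_A'}{2}+\psi_A\partial_x\Big)\xi,f(\xi)\Big\rangle
=\tfrac12\,\Re\int_\R\zeta_A^2\,\xi\,\overline{f(\xi)}\,dx+\Re\int_\R\psi_A\,\xi'\,\overline{f(\xi)}\,dx .
\end{align*}
In the first integral $\xi\overline{f(\xi)}=g(|\xi|^2)|\xi|^2$ is real; and since $g$ is real--valued, in the second $\Re\big(\xi'\overline{f(\xi)}\big)=g(|\xi|^2)\Re(\xi'\overline\xi)=\tfrac12\partial_x\big(G(|\xi|^2)\big)$, so integrating by parts that term becomes $-\tfrac12\int_\R\zeta_A^2\,G(|\xi|^2)\,dx$. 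The boundary contributions vanish because $\psi_A\in C^\infty$ is bounded, $\psi_A$ and $G(|\xi|^2)$ are continuous at $0$, and $G(|\xi|^2)\in W^{1,1}(\R)$ so $G(|\xi(x)|^2)\to 0$ as $|x|\to\infty$ (using \eqref{nonlinearity}, $G(0)=0$ and $\xi\in H^1$; note also that, unlike in Lemma \ref{lem:lemma1}, no $\delta$--term appears here since $f(\xi)$ carries no derivatives). This produces the clean identity
\begin{align*}
\Big\langle\Big(\tfrac{\psi_A'}{2}+\psi_A\partial_x\Big)\xi,f(\xi)\Big\rangle=\tfrac12\int_\R\zeta_A^2\big(\,|\xi|^2 g(|\xi|^2)-G(|\xi|^2)\,\big)\,dx .
\end{align*}

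Next, smallness enters. Since $\|\xi\|_{H^1}\le a_0$, the Sobolev embedding gives $\|\xi\|_{L^\infty}\le Ca_0<1$, so $|\xi(x)|^2\in[0,1)$ pointwise; then $|g(s)|\le Cs^{p}$ and $|G(s)|\le Cs^{p+1}$ from \eqref{nonlinearity} yield $\big|\,|\xi|^2g(|\xi|^2)-G(|\xi|^2)\,\big|\le C|\xi|^{2p+2}$, whence
\begin{align*}
\Big|\Big\langle\Big(\tfrac{\psi_A'}{2}+\psi_A\partial_x\Big)\xi,f(\xi)\Big\rangle\Big|\le C\int_\R\zeta_A^2\,|\xi|^{2p+2}\,dx .
\end{align*}

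It remains -- and this is the delicate point -- to bound the right--hand side by $\delta_0\|w'\|_{L^2}^2$ for $w=\zeta_A\xi$ once $a_0$ is small. The idea is to write $\zeta_A^2|\xi|^{2p+2}=|w|^2|\xi|^{2p}$ and peel off $|\xi|^{2p}\le\|\xi\|_{L^\infty}^{2p}\le (Ca_0)^{2p}$, reducing matters to absorbing an $\int_\R|w|^2\,dx$--type quantity into $\|w'\|_{L^2}^2$. This cannot be done by a plain Poincar\'e inequality, since $\|w\|_{L^2}$ is not controlled by $\|w'\|_{L^2}$ for a general $w$ of the form $\zeta_A\xi$; instead one uses that $\zeta_A^{-1}$ grows only like $e^{|x|/A}$, so $w$ is exponentially localized on scale $A$. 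Combining this decay, $\zeta_A^2\le 4e^{-2|x|/A}$, the pointwise bound \eqref{weightbound}, and Gagliardo--Nirenberg, one obtains -- after $A$ is fixed -- an inequality of the shape $\int_\R\zeta_A^2|\xi|^{2p+2}\,dx\le M_A\,\|\xi\|_{H^1}^{2p}\,\|w'\|_{L^2}^2$ with $M_A<\infty$; choosing $a_0$ so small that $CM_Aa_0^{2p}\le\delta_0$ finishes the proof. The main obstacle is exactly the verification of this last inequality: the naive estimate $\int|w|^2|\xi|^{2p}\le\|\xi\|_{L^\infty}^{2p}\|w\|_{L^2}^2$ only produces a fixed power of $\|\xi\|_{H^1}$, and it is here that the specific exponential cutoff $\zeta_A$ and the order of the quantifiers ($A\gg1$ first, then $a_0$ small) are essential.
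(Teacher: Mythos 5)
Your reduction is exactly the paper's: the identity
\[
\Big\langle\Big(\tfrac{\psi_A'}{2}+\psi_A\partial_x\Big)\xi,f(\xi)\Big\rangle=\tfrac12\int_\R\zeta_A^2\big(|\xi|^2g(|\xi|^2)-G(|\xi|^2)\big)\,dx
\]
and the resulting bound by $C\int_\R\zeta_A^2|\xi|^{2p+2}\,dx$ are correct and are precisely how the proof in the paper begins. The gap is in the step you yourself flag as "the delicate point": the inequality $\int_\R\zeta_A^2|\xi|^{2p+2}\,dx\le M_A\|\xi\|_{H^1}^{2p}\|w'\|_{L^2}^2$ is asserted, not proved, and the tools you name cannot deliver it. The pointwise bound \eqref{weightbound} controls $|w(x)|$ by $\langle x\rangle^{1/2}\langle(-\partial_x^2+\delta)w,w\rangle^{1/2}$, which carries the term $|w(0)|^2$; likewise Gagliardo--Nirenberg brings in $\|w\|_{L^2}$. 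Neither $|w(0)|^2$ nor $\|w\|_{L^2}^2$ is controlled by $\|w'\|_{L^2}^2$ (take $w$ nearly constant on a long interval), so any argument routed through \eqref{weightbound} produces at best a bound by $\delta_0(\|w'\|_{L^2}^2+|w(0)|^2)$, which is strictly weaker than \eqref{errmain51}. Since the whole point of this lemma is that the nonlinear term is absorbed by the $\|w'\|_{L^2}^2$ part alone, this is not a cosmetic issue.

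The paper's mechanism (borrowed from Kowalczyk--Martel--Mu\~noz, p.~793 of that reference) is different and is the actual content of the proof. One first peels off only a fractional power, $\|\xi\|_{L^\infty}^{q}$ with $q=\tfrac{2p}{3}$, and then proves \eqref{err5-1}, i.e.\ $\int_\R\zeta_A^2|\xi|^{2(p+1)-q}\,dx\lesssim\|w'\|_{L^2}^2$, by writing the integral as $\int_\R\zeta_A^{-(2p-q)}|w|^{2(p+1)-q}\,dx$, splitting at $x=0$, and integrating the exponential weight $e^{\pm\frac{2p-q}{A}x}$ by parts: the boundary term at $x=0$ is $-\frac{2A}{2p-q}|w(0)|^{2(p+1)-q}\le 0$ and is discarded, which is exactly how the otherwise uncontrollable $|w(0)|$ contribution is eliminated. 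The remaining term is handled by Cauchy--Schwarz, which reproduces the square root of the same integral times $A\|\xi\|_{H^1}^{q}\|w'\|_{L^2}$, and one absorbs it using $A^2a_0^{4p/3}\ll1$; the leftover factor $\|\xi\|_{H^1}^{q}$ then makes the final constant smaller than $\delta_0$. The choice $q=2p/3$ is what makes the exponents close up under Cauchy--Schwarz ($4(p-q)+2=2(p+1)-q$). Without this sign-of-the-boundary-term argument, or some substitute for it, your proof does not go through.
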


\begin{proof}
We follow \cite{KMM2,KMM3}. Recall that $f(\xi)=g(|\xi|^2)\xi$. Consider the $G$ in  \eqref{eq:G}.
Then, we have
 \begin{align*}  &
   \<    \psi _A \partial _x  \xi  , g(|\xi | ^2) \xi   \>  =\Re\int_\R \psi_A g(|\xi|^2)\overline\xi \partial_x \xi\,dx=\frac{1}{2}\int_\R \psi_A \partial_xG(|\xi|^2)\,dx=-\frac{1}{2}\int_\R \psi_A' G(|\xi|^2)\,dx.
\end{align*}
Thus, by $\psi_A'=\zeta_A^2$, we have
\begin{align*}
\left|\<   \( \frac{ \psi _A ' }{2}+ \psi _A \partial _x\) \xi  , f(\xi)   \>\right|
\leq \int_\R \zeta_A^2 \(|g(|\xi|^{2})|\xi|^2|+2^{-1}|G(|\xi|^2)|\)\,dx\leq C  \int_\R \zeta_A^2 |\xi|^{2(p+1)}\,dx.
\end{align*}
Let $q=\frac{2p}{3}>0$.
Then, by the embedding $H^1(\R)\hookrightarrow L^\infty(\R)$, we have
 \begin{align*}
\int_\R \zeta_A^2|\xi|^{2(p+1)}\,dx  &\lesssim   \| \xi \| _{H^1}^{q} \int  _\R   \zeta  _A ^2     |\xi | ^ {2(p+1)-q} dx.
\end{align*}
Therefore, it suffices to prove
\begin{align}\label{err5-1}
\int_\R\zeta_A^2 |\xi|^{2(p+1)-q}\,dx\lesssim \|w'\|_{L^2}^2.
\end{align}
Following p. 793  \cite{KMM2}, since $4(p-q)+2=2(p+1)-q$,
\begin{align*}
 \int  _\R   \zeta  _A ^2     |\xi | ^ {2(p+1)-q} dx &=    \int  _\R   \zeta  _A ^{-(2p-q)}     |w | ^ {2(p+1)-q} dx \\&\lesssim \int  _0^\infty  e ^{\frac{2p-q}{A}x}    |w | ^ {2(p+1)-q} dx + \int  _{-\infty}^0  e ^{-\frac{2p-q}{A}x}    |w | ^ {2(p+1)-q} dx \\&
\leq -\frac{2A}{2p-q}   |w(0)  | ^ {2p-q}     +   \frac{A}{2p-q} \int  _\R e ^{\frac{2p-q}{A}|x|}   \left|( |w | ^ {2(p+1)-q})'\right|\,dx\\&\lesssim
 A\int_\R \zeta_A ^{-2p+q}  |w|^{2p-q+1} |w'|\,dx=A\int_\R \zeta_A |\xi|^{2p-q+1}|w'|\,dx\\&\leq
A\|\xi\|_{H^1}^q\int_\R \zeta_A  |\xi|^{2(p-q)+1} |w'|\,dx\\&
\leq A\|\xi\|_{H^1}^{q} \(\int_\R \zeta_A^2 |\xi|^{2(p+1)-q}\,dx\)^{\frac12} \|w'\|_{L^2}\\&\leq
\|w'\|_{L^2}^2+\frac14A^2\|\xi\|_{H^1}^{2q} \int_\R \zeta_A^2|\xi|^{2(p+1)-q}\,dx.
\end{align*}
Thus, taking $\frac14 A^2 a_0^{\frac{4}{3}p}\ll1$, we have \eqref{err5-1}.
%
\end{proof}


We now examine  the contribution to  \eqref{prop:virial1} of the  term with $\tilde f(z,\xi)$.
\begin{lemma} \label{lem:lipsc0}    There exist  $C_0>0$  and $r>0$  and a neighborhood $\mathcal{U}$    of the origin in $\C$ s.t.    for any pair $z,\xi \in \mathcal{U}$  we have
  \begin{align}\label{eq:lipsc2} &
|\tilde f(z,\xi)|\le  C_0 |Q[z]| ^r |\xi|  .
\end{align}
\end{lemma}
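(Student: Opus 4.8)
The plan is to read \eqref{eq:lipsc2} as a pointwise inequality on $\C$, i.e.\ to establish constants $C_0,r>0$ and $\rho>0$ such that
\begin{equation}\label{pointwisetf}
|f(a+b)-f(a)-f(b)|\le C_0\,|a|^{r}|b|\qquad\text{whenever }a,b\in\C,\ |a|,|b|\le\rho .
\end{equation}
Granting \eqref{pointwisetf}, the lemma follows immediately: by Proposition \ref{prop:bddst} and the embedding $H^1(\R)\hookrightarrow L^\infty(\R)$ one has $\|Q[z]\|_{L^\infty}\lesssim|z|$, so after shrinking the neighbourhood $\mathcal U$ we may assume $|Q[z](x)|\le\rho$ for all $x$ and $|\xi|\le\rho$, and then applying \eqref{pointwisetf} with $a=Q[z](x)$, $b=\xi(x)$ yields \eqref{eq:lipsc2} with $r$ as in \eqref{pointwisetf}.

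To prove \eqref{pointwisetf} I would first record that $f(w)=g(|w|^2)w$ is of class $C^1$ on $\C$: it is $C^1$ away from the origin because $g\in C^3((0,\infty))$, and it is differentiable at $0$ with $Df(0)=0$ since $|g(|w|^2)|\lesssim|w|^{2p}\to0$; moreover its real differentials satisfy, for $0<|w|\le1$,
\begin{equation}\label{dfbounds}
|Df(w)|\lesssim|w|^{2p},\qquad |D^2f(w)|\lesssim|w|^{2p-1},
\end{equation}
which are direct consequences of \eqref{nonlinearity} (for the second bound one notes that $D^2f$ is a sum of $g''(|w|^2)$ times a cubic polynomial in $w$ and $g'(|w|^2)$ times a linear one). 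Next I would use the fundamental theorem of calculus, writing $f(a+b)-f(a)=\int_0^1Df(a+tb)[b]\,dt$ and $f(b)=f(b)-f(0)=\int_0^1Df(tb)[b]\,dt$, to get
\begin{equation}\label{tfrep}
f(a+b)-f(a)-f(b)=\int_0^1\bigl(Df(a+tb)-Df(tb)\bigr)[b]\,dt .
\end{equation}
Since the right‑hand side is linear in $b$, the estimate \eqref{pointwisetf} is reduced to the Hölder continuity of $Df$ near the origin, $|Df(w_1)-Df(w_2)|\lesssim|w_1-w_2|^{r}$ for $|w_1|,|w_2|\le1$ with $r:=\min(2p,1)\in(0,1]$, applied to $w_1=a+tb$, $w_2=tb$ (so that $w_1-w_2=a$).

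The Hölder bound for $Df$ comes from \eqref{dfbounds} by the standard dichotomy. Given $w_1,w_2$ with, say, $|w_1|\ge|w_2|$: if $|w_1-w_2|\ge\tfrac12|w_1|$ then $|Df(w_1)-Df(w_2)|\le|Df(w_1)|+|Df(w_2)|\lesssim|w_1|^{2p}\lesssim|w_1-w_2|^{r}$ (using $2p\ge r$ and $|w_1|\le1$); if instead $|w_1-w_2|<\tfrac12|w_1|$, the segment joining $w_1$ and $w_2$ stays at distance $>\tfrac12|w_1|$ from $0$, so the second bound in \eqref{dfbounds} gives $|Df(w_1)-Df(w_2)|\lesssim|w_1|^{2p-1}|w_1-w_2|\lesssim|w_1-w_2|^{r}$. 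Substituting into \eqref{tfrep} yields \eqref{pointwisetf} with $r=\min(2p,1)$.

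The one delicate point is the low‑regularity range $2p<1$: there $D^2f$ is unbounded at the origin, $Df$ is merely Hölder and not Lipschitz, and one has to be careful to invoke the bound $|D^2f(w)|\lesssim|w|^{2p-1}$ only along segments bounded away from $0$ — which is exactly what the dichotomy above arranges. Alternatively one could avoid derivatives by starting from the identity $f(a+b)-f(a)-f(b)=a\bigl(g(|a+b|^2)-g(|a|^2)\bigr)+b\bigl(g(|a+b|^2)-g(|b|^2)\bigr)$ and estimating each difference of $g$‑values through $|g(s_1)-g(s_2)|\lesssim|s_1-s_2|^{\min(p,1)}$ (itself a consequence of \eqref{nonlinearity}); the $b$‑term is then at once $\lesssim|a|^{\min(p,1)}|b|$, but recovering a positive power of $|a|$ from the $a$‑term forces a further case split according to whether $|b|\lesssim|a|$ or $|b|\gtrsim|a|$, which is why the representation \eqref{tfrep}, uniform in $p>0$, is the cleaner route.
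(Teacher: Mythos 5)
Your argument is correct, and it reorganizes the proof in a way that differs from the paper's. Both proofs reduce the lemma to the same pointwise inequality $|f(a+b)-f(a)-f(b)|\le C_0|a|^r|b|$ for small complex numbers (the paper sets $\zeta=Q[z]$ and works pointwise exactly as you do), and both ultimately produce $r=\min(2p,1)$. The paper then exploits the symmetry of the left--hand side to reduce to $|\zeta|\ge|\xi|$, splits into the regimes $|\zeta|\le 2|\xi|$ (where each of the three terms is estimated separately) and $|\zeta|\ge 2|\xi|$ (where $f(\xi)$ is absorbed directly and $f(\zeta+\xi)-f(\zeta)$ is controlled by the fundamental theorem of calculus along a segment bounded away from the origin). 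You instead package everything into the single identity
$f(a+b)-f(a)-f(b)=\int_0^1\bigl(Df(a+tb)-Df(tb)\bigr)[b]\,dt$
and reduce the whole lemma to the $\min(2p,1)$--H\"older continuity of $Df$ near $0$, which you prove by the dichotomy $|w_1-w_2|\gtrless\tfrac12|w_1|$. The underlying analytic content is the same --- in both proofs the case split exists precisely to ensure that the bound $|D^2f(w)|\lesssim|w|^{2p-1}$ from \eqref{nonlinearity} is only invoked on segments bounded away from the origin, which is the delicate point when $2p<1$ --- but your formulation is uniform in $p$, avoids the symmetry reduction and the three--way case analysis, and isolates the reusable fact ``$Df$ is $\min(2p,1)$--H\"older with $Df(0)=0$''. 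The paper's version is more elementary in that it never needs $f$ to be differentiable at the origin. One cosmetic remark: differentiability of $f$ at $0$ with $Df(0)=0$ follows from $|f(w)|\lesssim|w|^{2p+1}=o(|w|)$ rather than from $|g(|w|^2)|\to 0$ alone, which is what you presumably meant.
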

\proof  It is enough to set $ \zeta = Q[z]$ and then to prove
 \begin{align}\label{eq:lipsc21} &
|f(\zeta+\xi)-f(\xi)-f(\zeta)|\le  C_0 |\zeta| ^r |\xi|  .
\end{align}
We will prove \eqref{eq:lipsc21}  with $r=1$ if $2p\ge 1$ and with $r=2p$ if $2p\le 1$. With these values of $r$,
then  $|\zeta |\ge |\xi| $  implies  $| \zeta | ^r |\xi|\le | \xi | ^r |\zeta |$.  This means that it is enough to consider the case $|\zeta |\ge |\xi | $.

\noindent    If $|\zeta |\le 2 |\xi| $ we have $|\zeta |\sim    |\xi| $  and it is elementary to conclude that each of the
3 terms in the l.h.s. of  \eqref{eq:lipsc21} is $\lesssim | \zeta | ^r |\xi|$. Hence we are left with case  $|\zeta |\ge 2 |\xi| $.
Notice that $ \left | f(\xi)  \right |\lesssim    |\xi|   ^{2p+1}  \le      | \zeta | ^r |\xi|$. So it is enough to prove that  for a fixed $C_r>0$ we have
 \begin{align}\label{eq:lipsc3} &
 \left |  f(\zeta+\xi)-f(\zeta)  \right |   \le C_r | \zeta | ^r |w|.
\end{align}
  By \eqref{nonlinearity} we obtain the following, that implies \eqref{eq:lipsc3} and  completes the proof  of the lemma:
\begin{align*} &\left |  f(\zeta+\xi)-f(\zeta)  \right |    \le \int _{0}^{1} \left | \frac{d}{dt}  f  (\zeta +t \xi)  \right | dt \\& =   \int _{0}^{1}  \left |  g  (|\zeta +t \xi |^2)    \xi + 2 g'  (|\zeta +t \xi |^2) (\zeta +t w) \( \Re \(  \zeta \overline{w} \) +t|w|^2\) \right | dt \\& \le   C    |\zeta | ^{2p }  |\xi|  + C \(  |\zeta | ^{2p } |\xi|+ |\zeta | ^{2p -1 }  |\zeta | ^{2}\) \le   3 C       |\zeta | ^{2p}  |\xi|  .
\end{align*}   \qed

\begin{lemma} \label{lem:errmain7}   For any  $\delta _0>0$  and   $A\gg 1$ there exists $a _0>0$    s.t. for $\| (z,\xi) \| _{\C \times H^1}\le a_0 $  we have
  \begin{align*} &
 \left |  \<   \( \frac{ \psi _A ' }{2}+ \psi _A \partial _x\) \xi  , \tilde f(z,\xi) )   \>  \right |  \le \delta _0    \(\|w'\|_{L^2}^2+\|\<x\>^{-2}w\|_{L^2}^2\) \text{ for  $w= \zeta _A \xi$ .}
\end{align*}
\end{lemma}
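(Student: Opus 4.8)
The idea is to exploit that, by Lemma~\ref{lem:lipsc0}, $\tilde f(z,\xi)$ is pointwise dominated by $|Q[z]|^{r}|\xi|$ with $r>0$, and that $Q[z]$ is both exponentially localized and of size $O(|z|)$; this bounds the pairing by a small multiple of $\|w'\|_{L^2}^2+\|\<x\>^{-2}w\|_{L^2}^2$, the smallness being supplied by the prefactor $|z|^r$. Concretely, by Lemma~\ref{lem:lipsc0} we have $|\tilde f(z,\xi)|\le C_0|Q[z]|^r|\xi|$ with $r:=1$ if $2p\ge 1$ and $r:=2p$ if $2p\le1$, so $r>0$; and by \eqref{prop:bddst1}, $\|Q[z]\|_{H^1_{\gamma_0}}\le\|Q[z]-z\varphi\|_{H^1_{\gamma_0}}+|z|\,\|\varphi\|_{H^1_{\gamma_0}}\lesssim|z|$ for $|z|\le a_0$ (with $\varphi\in H^1_{\gamma_0}$, since necessarily $\gamma_0<1/2$). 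The one--dimensional embedding $\|e^{\gamma_0|x|}v\|_{L^\infty}\lesssim\|e^{\gamma_0|x|}v\|_{H^1}=\|v\|_{H^1_{\gamma_0}}$ then yields $|Q[z](x)|\lesssim|z|\,e^{-\gamma_0|x|}$, hence
\begin{equation*}
|\tilde f(z,\xi)(x)|\le C\,|z|^r\,e^{-r\gamma_0|x|}\,|\xi(x)|\qquad\text{for all }x\in\R,\ |z|\le a_0 .
\end{equation*}

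Next I would split $\<\(\frac{\psi_A'}{2}+\psi_A\partial_x\)\xi,\tilde f(z,\xi)\>=\frac12\<\psi_A'\xi,\tilde f\>+\<\psi_A\partial_x\xi,\tilde f\>$ and pass everywhere to $w=\zeta_A\xi$. In the first term $|\psi_A'|=\zeta_A^2$ and $\zeta_A^2|\xi|^2=|w|^2$, so $\frac12|\<\psi_A'\xi,\tilde f\>|\lesssim|z|^r\int_\R e^{-r\gamma_0|x|}|w|^2\,dx$. In the second term $|\psi_A|\le 2A$ by \eqref{zetacomp}, while from $\xi=w/\zeta_A$ we get $|\xi|\le e^{|x|/A}|w|$ and, using the expression for $\zeta_A'$ in the proof of Lemma~\ref{lem:lemma1} (which gives $|\zeta_A'/\zeta_A|\le CA^{-1}$, $C$ depending only on $\chi$), $|\partial_x\xi|\le e^{|x|/A}\(|w'|+CA^{-1}|w|\)$. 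Since $A\gg1$ we may assume $2/A\le r\gamma_0/2$ and $CA^{-1}\le1$, so $e^{2|x|/A}e^{-r\gamma_0|x|}\le e^{-\frac{r\gamma_0}{2}|x|}$ and therefore
\begin{equation*}
\left|\<\psi_A\partial_x\xi,\tilde f(z,\xi)\>\right|\lesssim A\,|z|^r\int_\R e^{-\frac{r\gamma_0}{2}|x|}\(|w'|+|w|\)|w|\,dx .
\end{equation*}

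It then remains to bound $\int_\R e^{-c|x|}|w|^2\,dx$ and $\int_\R e^{-c|x|}|w'||w|\,dx$, for $c>0$, by $\|w'\|_{L^2}^2+\|\<x\>^{-2}w\|_{L^2}^2$. From $w(x)=w(0)+\int_0^xw'\,dt$ and H\"older, $|w(x)|^2\le 2|w(0)|^2+2|x|\,\|w'\|_{L^2}^2$; moreover $\int_{-1}^1|w|^2\le4\|\<x\>^{-2}w\|_{L^2}^2$ and the embedding $H^1(-1,1)\hookrightarrow C[-1,1]$ give $|w(0)|^2\le C\(\|w'\|_{L^2}^2+\|\<x\>^{-2}w\|_{L^2}^2\)$, so $\int_\R e^{-c|x|}|w|^2\,dx\le C_c\(\|w'\|_{L^2}^2+\|\<x\>^{-2}w\|_{L^2}^2\)$; Cauchy--Schwarz and Young's inequality then give the same bound for $\int_\R e^{-c|x|}|w'||w|\,dx$. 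Combining, the pairing is $\le C(1+A)|z|^r\(\|w'\|_{L^2}^2+\|\<x\>^{-2}w\|_{L^2}^2\)$, and since $A$ is fixed before $a_0$ while $|z|\le\|(z,\xi)\|_{\C\times H^1}\le a_0$, it suffices to take $a_0$ (no larger than the constant in Proposition~\ref{prop:bddst}) so small that $C(1+A)a_0^r<\delta_0$.

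The only genuinely delicate point is $\<\psi_A\partial_x\xi,\tilde f(z,\xi)\>$, in which the derivative falls on $\xi$: converting to $w$ produces the growing weight $1/\zeta_A\sim e^{|x|/A}$ together with an undifferentiated remainder $\sim A^{-1}w/\zeta_A$, and it is precisely the decay $e^{-r\gamma_0|x|}$ inherited from $Q[z]$, uniform in $\xi$, that (after choosing $A$ large) tames these factors; the smallness is then automatic because of the prefactor $|z|^r$. This runs parallel to the treatment of the $f(\xi)$ term in Lemma~\ref{lem:errmain5}, the extra localization of $\tilde f$ making the argument, if anything, simpler.
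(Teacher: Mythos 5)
Your proof is correct and follows essentially the same route as the paper's: both rest on the pointwise bound $|\tilde f(z,\xi)|\lesssim |Q[z]|^{r}|\xi|$ from Lemma \ref{lem:lipsc0}, the exponential localization $|Q[z](x)|\lesssim |z|\,e^{-\gamma_0|x|}$ to absorb the $e^{O(|x|/A)}$ weights produced when passing from $\xi$ to $w=\zeta_A\xi$, and the smallness of $|z|^{r}\le a_0^{r}$ with $a_0$ chosen after $A$. The only (cosmetic) differences are that the paper bounds the localized integrals directly by $\|\<x\>^{-2}w\|_{L^2}^2$ via $e^{-\gamma_0|x|}\lesssim\<x\>^{-4}$ rather than routing through $w(0)$ and \eqref{weightbound}, and you track the exponent $r$ more carefully than the paper's displayed estimates do.
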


\begin{proof}
First, we have
\begin{align*}
\left |  \<  \frac{ \psi _A ' }{2} \xi   ,\tilde f(z,\xi)   \>  \right |  \lesssim \int_\R |Q[z]||w|^2\,dx\lesssim a_0\|\<x\>^{-2}w\|_{L^2}^2.
\end{align*}
Next, since $|\partial_x \xi|\lesssim e^{2\frac{|x|}{A}}\(|w'|+|w|\)$, we have
\begin{align*}
\left |  \< \psi _A \partial_x \xi   ,\tilde f(z,\xi)   \>  \right |\lesssim A \int_\R |Q[z]|e^{\frac{3}{A}|x|}\(|w'|+|w|\)|w|\,dx\lesssim a_0A \(\|w'\|_{L^2}^2+\|\<x\>^{-2}w\|_{L^2}^2\).
\end{align*}
Therefore, taking $a_0A\ll1$, we have the conclusion.
\end{proof}

%
%
%
%

  \subsection{Closure of the estimates and completion of  the proof of  Theorem  \ref{thm:small en}} \label{sec:estimates}

We set
\begin{equation}\label{eq:Xnorm}
    \|w\|_X^2:=\|w'\|_{L^2}^2+\|\<x\>^{-2}w\|_{L^2}^2.
\end{equation}
In view of \eqref{eq:virial2}--\eqref{prop:virial1}, for any $T>0$, we have
\begin{align}   \label{sec:estimates1}&
\int_0^T\| w(t) \| _{X } ^2\,dt \lesssim   \epsilon ^2 + \(\int_0^T\| w \| _{X }^2\,dt\)^{1/2} \| \dot z +\im E z \| _{L^2 ((0,T) ) }.
\end{align}
From \eqref{11} we have
\begin{align*}
|\dot z + \im Ez |\lesssim \|\xi\|_{L^\infty}^{2p}\|w\|_{X} + |z| ^{\min(2p,1)}\|w\|_{X}.
\end{align*}
Thus,
\begin{align*}
\| \dot z +\im E z \| _{L^2 ((0,T) ) }  & \lesssim   \epsilon^{  {\min( 2p ,1) }} \(\int_0^T\| w(t) \| _X^2\,dt\)^{1/2}.
\end{align*}
Entering this in \eqref{sec:estimates1}  we obtain
\begin{align*}
 \| w \| _{L^2 ([0,T],X) }^2\lesssim  \epsilon ^2 +  \epsilon^{  {\min(2p ,1) }} \| w \| _{L^2 ([0,T],X) }^2 .
\end{align*}
  Since  $\min(2p ,1) >0$,    we   obtain $\| w \| _{L^2 ([0,T],X) }\le C_0  \epsilon$ for a    fixed $C_0>0$ and any $T$,  and so
\begin{align} \label{est1} &
\| w \| _{L^2 (\R _+,X) }   \le C_0  \epsilon,
\end{align}
which for $A$ sufficiently large implies   the estimate \eqref{eq:small en3} and ends the proof of Theorem \ref{thm:small en}.

\section{Proof of Theorem  \ref{thm:small en-} } \label{sec:thm:small en-}

We have the following  ansatz.
\begin{lemma}\label{lem:decomposition}
There exists $c _0 >0$ s.t.  there exists a $C>0$ s.t. for all $u \in H^1$   with $\|u\|_{H^1}<c  _0 $, there exists a unique  pair $(z,\eta )\in   \C \times  ( H^1 \cap \mathcal{H}_{c}[z])$
s.t.
\begin{equation}\label{eq:decomposition1}
\begin{aligned} &
u= Q[z]+\eta  \text{ with }  |z |+\|\eta \|_{H^1}\le C \|u\|_{H^1} .
\end{aligned}
\end{equation}
 The map  $u \to (z,\eta )$  is in $C^1 (D _{H^1}(0, c _0 ),
 \C   \times  H^1 )$.
\end{lemma}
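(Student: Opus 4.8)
The plan is to run the same implicit function theorem argument used for the $\xi\in P_cH^1$ ansatz in Section~\ref{section:set up}, the only new feature being that the orthogonality conditions defining $\mathcal H_c[z]$ move with $z$. Accordingly, I would parametrize by $z$ and recover $\eta$ afterwards rather than projecting onto a fixed subspace. Concretely, set
\begin{align*}
F(z,u):=\begin{pmatrix} \<\im(u-Q[z]),D_1Q[z]\> \\[2pt] \<\im(u-Q[z]),D_2Q[z]\> \end{pmatrix}.
\end{align*}
By definition of $\mathcal H_c[z]$ in \eqref{eq:contsp}, a decomposition $u=Q[z]+\eta$ with $\eta\in\mathcal H_c[z]$ exists exactly when $F(z,u)=0$, so the whole statement reduces to solving $F(z,u)=0$ for $z=z(u)$ near the origin and then putting $\eta:=u-Q[z(u)]$.

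The first substantive point — and the place where the hypothesis $p>1/2$ is used — is that $F\in C^1\bigl(D_\C(0,a_0)\times H^1;\R^2\bigr)$. Since $2p>1$, Proposition~\ref{prop:bddst} gives $Q\in C^2\bigl(D_\C(0,a_0),H^1_{\gamma_0}\bigr)$ with the bounds \eqref{prop:bddst1} and \eqref{prop:bddst4}, so $z\mapsto D_jQ[z]$ is $C^1$ into $H^1_{\gamma_0}\hookrightarrow L^2$, and $F$ is built from these maps and the affine continuous map $u\mapsto \im(u-Q[z])$ via the bilinear form $\<\cdot,\cdot\>$. Next I would evaluate at $(z,u)=(0,0)$: using $Q[0]=0$, $D_1Q[0]=\varphi$, $D_2Q[0]=\im\varphi$ and $\|\varphi\|_{L^2}=1$, one finds $F(0,0)=0$ and
\begin{align*}
\left.\frac{\partial F}{\partial(z_R,z_I)}\right|_{(z,u)=(0,0)}=\begin{pmatrix} 0 & 1 \\ -1 & 0 \end{pmatrix},
\end{align*}
which is invertible. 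The implicit function theorem then provides neighborhoods of $0$ in $H^1$ and in $\C$ and a map $u\mapsto z(u)$ of class $C^1$, with $z(0)=0$, such that $z(u)$ is the unique small solution of $F(z,u)=0$.

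Finally I would set $\eta:=u-Q[z(u)]$. By construction $\<\im\eta,D_jQ[z(u)]\>=0$ for $j=1,2$, i.e.\ $\eta\in\mathcal H_c[z(u)]$, and $\eta\in H^1$ because $Q[z]\in H^1_{\gamma_0}\subset H^1$. The estimate $|z|+\|\eta\|_{H^1}\le C\|u\|_{H^1}$ follows from $z\in C^1$ with $z(0)=0$, which yields $|z(u)|\lesssim\|u\|_{H^1}$, together with $\|Q[z]\|_{H^1}\lesssim|z|$ (from $\|Q[z]-z\varphi\|_{H^1_{\gamma_0}}\le C|z|^{2p+1}$ in \eqref{prop:bddst1}), so that $\|\eta\|_{H^1}\le\|u\|_{H^1}+\|Q[z]\|_{H^1}\lesssim\|u\|_{H^1}$. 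Uniqueness of the pair for $\|u\|_{H^1}$ small is the uniqueness clause of the implicit function theorem, once one observes that the a priori bound forces $z$ into the neighborhood to which that clause applies; the $C^1$-dependence $u\mapsto(z,\eta)$ is then immediate from that of $u\mapsto z(u)$ and of $z\mapsto Q[z]$. I expect no real obstacle here: the argument is routine, the two things to watch being the $C^2$-regularity of $Q$ (hence the restriction $p>1/2$) and the fact that $\mathcal H_c[z]$ depends on $z$, which is exactly why one solves for $z$ first and defines $\eta$ last.
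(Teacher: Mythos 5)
Your proof follows essentially the same route as the paper: apply the implicit function theorem to the map $F(z,u)$ encoding the orthogonality conditions of $\mathcal H_c[z]$, using the $C^2$ regularity of $Q$ from Proposition \ref{prop:bddst} (hence $p>1/2$) to get $F\in C^1$, then define $\eta:=u-Q[z(u)]$. Your explicit computation of the Jacobian at the origin as $\begin{pmatrix}0&1\\-1&0\end{pmatrix}$ is correct for your choice of $F$ and is in fact more careful than the paper's terse assertion; the argument is sound.
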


\begin{proof}
   Set
\begin{align*}
F(z,u):=\begin{pmatrix} \<u-Q[z],\im D_1Q[z]\>\\ \<u-Q[z],\im D_2Q[z] \> \end{pmatrix}.
\end{align*}
Then, since here $2p>1$, we have $ F(z,u)\in C^1$ with formula \eqref {eq:decomposition2}      true for this function.  We conclude by Implicit Function Theorem.
\end{proof}

The following operator was introduced by Gustafson \textit{et al.} \cite{GNT}.

\begin{lemma} \label{lem:contcoo}
 There exists
$a _0>0$ such that for any $z\in\C$ with $|z|<a _0$  there exist $\alpha_j(z)$ ($j=1,2$) s.t. $\|e^{\gamma|x|}\alpha_j\|_{H^1}\lesssim |z|^{p-1}$ such that
the $\R$--linear operator $R[z]$ defined by
\begin{align}\label{contcoo-0}
R[z]\xi:=\xi +\<\im\xi,\alpha_1(z)\>\varphi +\<\im \xi,\alpha_2(z)\>\im \varphi ,
\end{align}
satisfies  $R[z]:\mathcal{H}[0]\to \mathcal{H}_c[z]$ and $ \left. P_c\right|_{\mathcal{H}_c[z]}=R[z]^{-1}$.

\end{lemma}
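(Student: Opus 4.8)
The plan is to obtain $R[z]$ as the two-sided inverse of $P_c$ restricted to $\mathcal{H}_c[z]$. By Definition~\ref{def:contsp} and the identity $\langle\im f,g\rangle=-\langle f,\im g\rangle$ for the pairing \eqref{eq:bilf}, $\mathcal{H}_c[z]=\{\im D_1Q[z],\im D_2Q[z]\}^{\perp}$ is a closed codimension-$2$ subspace which, by \eqref{prop:bddst1}, is an $O(|z|^{2p})$ perturbation of $\mathcal{H}_c[0]=L^2_c=\mathrm{ran}\,P_c$, while $\ker P_c=\mathrm{Sp}(\varphi)$. I will also use that $\xi\in L^2_c$ iff $\langle\xi,\varphi\rangle=\langle\xi,\im\varphi\rangle=0$ and that $\langle\varphi,\varphi\rangle=\langle\im\varphi,\im\varphi\rangle=1$, $\langle\varphi,\im\varphi\rangle=0$.

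First I would set $\rho_j(z):=D_jQ[z]-\im^{j-1}\varphi$, so that $\|\rho_j(z)\|_{H^1_\gamma}\le C|z|^{2p}$ by \eqref{prop:bddst1}, and introduce the $2\times2$ matrix $M(z)$ with entries $M_{j1}(z)=\langle\im\varphi,D_jQ[z]\rangle$ and $M_{j2}(z)=-\langle\varphi,D_jQ[z]\rangle$. Expanding $D_jQ[z]=\im^{j-1}\varphi+\rho_j(z)$ gives $M(0)=\bigl(\begin{smallmatrix}0&-1\\1&0\end{smallmatrix}\bigr)$, hence $M(z)$ is invertible with $\|M(z)^{-1}\|\le C$ for $|z|<a_0$ ($a_0$ small), and $z\mapsto M(z)^{-1}$ is $C^1$ since $Q\in C^2$ here. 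Then I would define
\begin{equation*}
\alpha_i(z):=-\sum_{j=1}^{2}\bigl(M(z)^{-1}\bigr)_{ij}\,\rho_j(z),\qquad i=1,2,
\end{equation*}
which immediately yields $\|e^{\gamma|x|}\alpha_i(z)\|_{H^1}\le C\|M(z)^{-1}\|\,\max_j\|\rho_j(z)\|_{H^1_\gamma}\le C|z|^{2p}\le C|z|^{p-1}$ (the last step because $|z|^{p+1}\le 1$), so the claimed bound holds with room to spare.

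Next I would check that $R[z]$ maps $L^2_c$ into $\mathcal{H}_c[z]$. For $\xi\in L^2_c$, writing $R[z]\xi=\xi+c_1\varphi+c_2\im\varphi$ with $c_i=\langle\im\xi,\alpha_i(z)\rangle$ and using $\im(\im\varphi)=-\varphi$, the conditions $\langle\im(R[z]\xi),D_jQ[z]\rangle=0$ $(j=1,2)$ reduce to the system $\sum_{k=1}^{2}M_{jk}(z)c_k=-\langle\im\xi,D_jQ[z]\rangle$. Since $\langle\im\xi,\im^{j-1}\varphi\rangle=0$ for $\xi\in L^2_c$, the right-hand side equals $-\langle\im\xi,\rho_j(z)\rangle$, so the unique solution is $c_i=-\sum_j(M(z)^{-1})_{ij}\langle\im\xi,\rho_j(z)\rangle=\langle\im\xi,\alpha_i(z)\rangle$, exactly the coefficients appearing in $R[z]$; hence $R[z]\xi\in\mathcal{H}_c[z]$. (Using $\rho_j(z)$ rather than $D_jQ[z]$ in the definition of $\alpha_i$ changes the coefficients only by an element of $\mathrm{Sp}(\varphi)$, invisible to $\xi\mapsto\langle\im\xi,\cdot\rangle$ on $L^2_c$; this harmless shift is exactly what promotes the trivial $O(1)$ bound to $O(|z|^{2p})$.) The inverse relations then follow quickly: $P_c\varphi=P_c\im\varphi=0$ and $P_c\xi=\xi$ on $L^2_c$ give $P_c\circ R[z]=\mathrm{Id}_{L^2_c}$, while for $\eta\in\mathcal{H}_c[z]$ the vector $R[z]P_c\eta-\eta$ lies in $\mathcal{H}_c[z]\cap\ker P_c=\mathcal{H}_c[z]\cap\mathrm{Sp}(\varphi)$, which is $\{0\}$ for $|z|<a_0$ because $a\varphi+b\im\varphi\in\mathcal{H}_c[z]$ forces $(a,b)$ to lie in the kernel of $M(z)$; thus $R[z]\circ P_c=\mathrm{Id}_{\mathcal{H}_c[z]}$ and $P_c|_{\mathcal{H}_c[z]}=R[z]^{-1}$.

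There is no real analytic obstacle here: everything reduces to $2\times2$ linear algebra built on Proposition~\ref{prop:bddst}. The only genuine subtlety is organizational — choosing the right representative $\alpha_i$ modulo $\mathrm{Sp}(\varphi)$ so that the weighted estimate actually materializes, and keeping the sign straight in $\langle\im f,g\rangle=-\langle f,\im g\rangle$ everywhere it is used.
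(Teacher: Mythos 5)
Your proof is correct and follows essentially the same route as the paper: both reduce the problem to the invertibility of the same $2\times2$ matrix (your $M(z)$ is, up to sign conventions, the paper's determinant $D(z)$ and cofactor matrix), solve the linear system for the coefficients $c_1,c_2$, read off $\alpha_1,\alpha_2$, and then get $P_cR[z]=\mathrm{Id}$ directly and $R[z]P_c=\mathrm{Id}$ on $\mathcal{H}_c[z]$ from the uniqueness/triviality of $\mathcal{H}_c[z]\cap\mathrm{Sp}(\varphi)$.

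The one substantive difference is your choice of representative for $\alpha_j$ modulo $\mathrm{Sp}(\varphi)$. The paper defines $\alpha_j$ in \eqref{contcoo-3} as combinations of $D_1Q[z]$ and $D_2Q[z]$ themselves, which gives $\alpha_1(0)=-\im\varphi\neq0$, i.e.\ $\|\alpha_j\|_{H^1_\gamma}=O(1)$; this verifies the stated bound $\lesssim|z|^{p-1}$ only when $p\le1$ and fails to do so literally for $p>1$. You instead build $\alpha_j$ from $\rho_j(z)=D_jQ[z]-\im^{j-1}\varphi$, which by \eqref{prop:bddst1} yields $\|\alpha_j\|_{H^1_\gamma}\lesssim|z|^{2p}$, and you correctly observe that this shift is invisible to $\xi\mapsto\langle\im\xi,\cdot\rangle$ on $L^2_c$, so the operator $R[z]$ is unchanged. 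Your normalization therefore actually delivers (indeed improves) the estimate claimed in the lemma for all $p$, whereas the paper's literal formulas only give it after the same harmless subtraction you perform. The rest — the sign bookkeeping in $\langle\im f,g\rangle=-\langle f,\im g\rangle$, the verification that $R[z]\xi\in\mathcal{H}_c[z]$, and the two inverse identities — matches the paper's argument step for step.
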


\begin{proof}
We search for $\beta_j=\beta_j(z,\xi)\in\R$ ($j=1,2$) which satisfy
\begin{align}\label{contcoo-1}
\<\im\(\xi+(\beta_1+\im\beta_2)\varphi \),D_jQ[z]\>=0\   \text{ for } j=1,2.
\end{align}
Since
\begin{align*}
D(z):=\<\varphi ,D_1Q[z]\>\<\im\varphi ,D_2Q[z]\>-\<\im \varphi D_1Q[z]\>\<\varphi ,D_2Q[z]\>=1+o(1),
\end{align*}
where $o(1)\to 0$ as $|z|\to 0$,
the system \eqref{contcoo-1} have a unique solution
\begin{align}\label{contcoo-2}
\begin{pmatrix}
\beta_1\\
\beta_2
\end{pmatrix}
=
D(z)^{-1}
\begin{pmatrix}
\<\varphi ,D_2Q[z]\> & -\<\varphi ,D_1Q[z]\>\\
\<\im\varphi ,D_2Q[z]\> & -\<\im\varphi ,D_1Q[z]\>
\end{pmatrix}
\begin{pmatrix}
\<\im \xi,D_1Q[z]\>\\
\<\im \xi, D_2 Q[z]\>
\end{pmatrix}.
\end{align}
Therefore, setting
\begin{equation}\label{contcoo-3}
\begin{aligned}
\alpha_1(z)&:= D(z)^{-1} \(\<\varphi ,D_2Q[z]\>D_1Q[z] -\<\varphi ,D_1Q[z]\>D_2Q[z]\),\\
\alpha_2(z)&:=D(z)^{-1} \(\<\im\varphi , D_2Q[z]\>D_1Q[z] -\<\im\varphi ,D_1Q[z]\>D_2Q[z]\),
\end{aligned}
\end{equation}
we see that $R[z]$ defined by \eqref{contcoo-0} with $\alpha_j$ given by \eqref{contcoo-3} satisfies $R[z]:\mathcal H[0]\to \mathcal H[z]$.
It is obvious that we have $P_c R[z]\xi=\xi$ for all $\xi\in \mathcal H[0]$.
Finally, we have
$
R[z]P_c\xi=\xi,
$
for all $\xi\in \mathcal H[z]$.
Indeed, since $R[z]P_c\xi$ is the unique element of $\mathcal H[z]$ of the form
$P_c\xi + \beta \varphi$,
we have the conclusion from the spectral decomposition $\xi = P_c\xi + \tilde \beta \varphi$ where $\tilde \beta=\int \xi\varphi\,dx$.
\end{proof}

Thanks to Lemmas \ref{lem:decomposition} and \ref{lem:contcoo} we conclude the following.
\begin{lemma} \label{lem:systcoo}  Picking $a_0>0$ small enough, we have
\begin{equation} \label{eq:systcoo1}
 u= Q[z]+R[z] \xi  \text{ for $(z,\xi )\in D_{\C }(0 , a_ 1) \times ( H^1\cap L^2_c) $}
\end{equation} such that    $(z,\xi )\to u$
 is $C^1$  and
\begin{equation} \label{eq:coo11}
  |z|+\| \xi \| _{H^1} \sim   \| u \| _{H^1}.
\end{equation}

\end{lemma}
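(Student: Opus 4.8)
The plan is to combine the two decomposition lemmas by composition. Given $u\in H^1$ with $\|u\|_{H^1}<c_0$, Lemma~\ref{lem:decomposition} produces a unique pair $(z,\eta)\in\C\times(H^1\cap\mathcal H_c[z])$ with $u=Q[z]+\eta$ and $|z|+\|\eta\|_{H^1}\le C\|u\|_{H^1}$. Since $\eta\in\mathcal H_c[z]$ and $|z|<a_0$, Lemma~\ref{lem:contcoo} applies: the map $R[z]\colon\mathcal H_c[0]=H^1\cap L^2_c\to\mathcal H_c[z]$ is an $\R$-linear isomorphism with inverse $P_c|_{\mathcal H_c[z]}$. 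Hence we define $\xi:=P_c\eta\in H^1\cap L^2_c$, so that $\eta=R[z]\xi$ and therefore $u=Q[z]+R[z]\xi$, which is \eqref{eq:systcoo1}. Uniqueness of $(z,\xi)$ follows from uniqueness of $(z,\eta)$ in Lemma~\ref{lem:decomposition} together with injectivity of $R[z]$.

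Next I would establish the $C^1$ regularity of the map $(z,\xi)\mapsto u$. This is immediate: $Q[z]$ is $C^1$ in $z$ by Proposition~\ref{prop:bddst} (indeed $C^2$ since $2p>1$), the coefficients $\alpha_j(z)$ entering $R[z]$ are built in \eqref{contcoo-3} out of $D_kQ[z]$ and the scalar $D(z)^{-1}$, which are $C^1$ functions of $z$ (again using $2p>1$ so that $D_kQ[z]\in C^1$), and $R[z]\xi=\xi+\<\im\xi,\alpha_1(z)\>\varphi+\<\im\xi,\alpha_2(z)\>\im\varphi$ is jointly $C^1$ in $(z,\xi)$ because it is linear in $\xi$ with $C^1$ coefficients in $z$. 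Composition of $C^1$ maps is $C^1$. For the inverse direction, that $u\mapsto(z,\xi)$ is $C^1$, one notes $z$ is the same coordinate as in Lemma~\ref{lem:decomposition} (hence $C^1$ in $u$ by the Implicit Function Theorem argument there), and $\xi=P_c(u-Q[z])$ is then $C^1$ in $u$.

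Finally, for the norm equivalence \eqref{eq:coo11}: from Lemma~\ref{lem:decomposition} we already have $|z|+\|\eta\|_{H^1}\lesssim\|u\|_{H^1}\lesssim|z|+\|\eta\|_{H^1}$ (the second inequality being just $\|Q[z]\|_{H^1}\lesssim|z|$ from \eqref{prop:bddst1} plus the triangle inequality). So it suffices to show $\|\eta\|_{H^1}\sim\|\xi\|_{H^1}$ uniformly for $|z|<a_0$. One direction is $\|\eta\|_{H^1}=\|R[z]\xi\|_{H^1}\le\|\xi\|_{H^1}+C(\|\alpha_1(z)\|_{L^2}+\|\alpha_2(z)\|_{L^2})\|\xi\|_{H^1}\|\varphi\|_{H^1}\le(1+C|z|^{p-1}\cdot\,?)\,\|\xi\|_{H^1}$; here I would use the bound $\|e^{\gamma|x|}\alpha_j\|_{H^1}\lesssim|z|^{p-1}$ from Lemma~\ref{lem:contcoo}, together with $p>1/2$ — note, however, that $|z|^{p-1}$ need not be small when $p<1$, so the honest statement is that $\|\alpha_j(z)\|_{L^2}$ stays \emph{bounded} as $|z|\to0$ when $p\ge1$ and may blow up when $p<1$; this is the one delicate point. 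Since $2p>1$ is assumed, the pairing $\<\im\xi,\alpha_j(z)\>$ must instead be estimated by writing $\alpha_j(z)=\alpha_j(z)-\alpha_j(0)$ and exploiting that $D_kQ[z]-\im^{k-1}\varphi=O(|z|^{2p})$ by \eqref{prop:bddst1}, which gives $\|\alpha_j(z)\|_{L^2_{\gamma}}=O(|z|^{\min(2p,1)})\to0$; hence $\|\eta-\xi\|_{H^1}=o(1)\|\xi\|_{H^1}$ and, shrinking $a_0$, $\|\eta\|_{H^1}\sim\|\xi\|_{H^1}$. The reverse bound $\|\xi\|_{H^1}=\|P_c\eta\|_{H^1}\le C\|\eta\|_{H^1}$ is just boundedness of $P_c$ on $H^1$. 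This closes \eqref{eq:coo11}. I expect the main obstacle to be precisely this last quantitative control of $R[z]$ near $z=0$: one must check that the $\alpha_j$ are not merely bounded but small, which forces the use of the sharp decay estimates \eqref{prop:bddst1} rather than the crude $|z|^{p-1}$ bound, and is the reason the hypothesis $p>1/2$ (i.e. $2p>1$) enters.
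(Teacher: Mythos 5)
Your proof is correct and is exactly what the paper intends: Lemma \ref{lem:systcoo} is stated there with no written proof beyond the remark that it follows from Lemmas \ref{lem:decomposition} and \ref{lem:contcoo}, and your argument is precisely that composition, including the genuinely delicate point that the smallness of the corrections $\<\im \xi,\alpha_j(z)\>\varphi$ comes not from smallness of $\alpha_j(z)$ itself but from $\<\im\xi,\alpha_j(0)\>=0$ for $\xi\in L^2_c$ together with $\|\alpha_j(z)-\alpha_j(0)\|_{L^2_\gamma}=O(|z|^{2p})$. The only cosmetic fix: the line ``$\alpha_j(z)=\alpha_j(z)-\alpha_j(0)$'' should be phrased as replacing $\alpha_j(z)$ by $\alpha_j(z)-\alpha_j(0)$ \emph{inside the pairing}, since $\alpha_1(0)=-\im\varphi$ and $\alpha_2(0)=\varphi$ are not zero but lie in $\mathrm{Sp}(\varphi)$ and hence pair to zero with $\im\xi$ when $\xi\in L^2_c$.
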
 \qed

In terms decomposition \eqref{eq:small en1-}--\eqref{eq:small en2-}   equation  \eqref{1}
can be expressed as follows:
\begin{align}\label{11-}
 & \< \im DQ(\dot z +\im E z), D_jQ \> - \< \im\eta , D_jDQ(\dot z +\im E z) \> =\< f(\eta )+\tilde f(z,\eta ),D_jQ \> \text{ for }j=1,2\\& \im \dot \eta =H\eta+f(\eta )+\tilde f(z,\eta ) - \im DQ(\dot z +\im E z)  \label{12-},
\end{align}
where
\begin{align}\label{21-}
 & H\eta =H[z]\eta = H_1\eta +V[z]\eta   \text{ with } V[z]\eta =  - |Q[z]|^2 \eta - 2Q[z] \Re (\eta \overline{Q[z]} )\end{align}
and where $f(\eta )$ and  $\tilde f(z,\eta ) $ are defined like in  Sect.\ \ref{sec:thm:small en}. Like above, we consider
\begin{align}\label{Quadr-}
 &   \mathcal{J}(\eta ) = 2^{-1} \< \im \( \frac{ \psi _A ' }{2}+ \psi _A \partial _x\) \eta , \eta \> . \end{align}
Proceeding like in Sect. \ref{sec:commutator} we obtain
\begin{align}
  \frac{d}{dt}\mathcal{J}(\eta )  &-  \< \( \frac{\psi _A ' }{2}+\psi _A \partial _x\) \eta , \im DQ(\dot z +\im E z)  \>  \label{eq:virial2-}  \\&
   =\<  \( \frac{ \psi _A ' }{2}+ \psi _A \partial _x\) \eta , H_1\eta  \> + \<   \( \frac{ \psi _A ' }{2}+ \psi _A \partial _x\) \eta , V[z]\eta  \> \nonumber  \\&  +   \<   \( \frac{ \psi _A ' }{2}+ \psi _A\partial _x\) \eta , \widetilde{f}(z, \eta )   \>  +   \<   \( \frac{ \psi _A ' }{2}+ \psi _A\partial _x\) \eta , f(\eta )   \> .\nonumber
\end{align}

Then, like in Sect.  \ref{sec:commutator}, we have the following result.

\begin{proposition} \label{prop:virial-} There exist values $1\gg a _0>0$ and  $A\gg 1$   s.t. for $\xi\in P_c H^1$ and   $ |z| + \| \eta \| _{H^1}<a _0  $ we have
 \begin{equation}\label{prop:virial1-}
\begin{aligned}
&
\text{r.h.s.\ of \eqref{eq:virial2-}}
  \ge  \frac{1}{12}  \(\|w'\|_{L^2}+\|\<x\>^{-2}w\|_{L^2}^2\) \text{  for $w= \zeta _A \eta$.}
\end{aligned}
\end{equation}
\end{proposition}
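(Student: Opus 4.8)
The plan is to mimic the proof of Proposition~\ref{prop:virial}, since the right-hand side of \eqref{eq:virial2-} differs from that of \eqref{eq:virial2} only by the extra potential term $\<(\psi_A'/2+\psi_A\partial_x)\eta, V[z]\eta\>$ and by the replacement of the error term $\tilde f(z,\xi)$ by $\tilde f(z,\eta)$ and of $f(\xi)$ by $f(\eta)$. First I would handle the linear kinetic term $\<(\psi_A'/2+\psi_A\partial_x)\eta, H_1\eta\>$ exactly as before: Lemma~\ref{lem:lemma1} applies verbatim with $\xi$ replaced by $\eta$, giving $\<H_{1/2}w,w\> + \frac{1}{2A}\<Vw,w\>$ with $w=\zeta_A\eta$, and the error estimate Claim~\ref{lem:errorV} is unchanged. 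The only subtlety is the orthogonality Claim~\ref{lem:errorOrth}, which was proved for $\xi\in L^2_c$; now $\eta\in\mathcal{H}_c[z]$, not $L^2_c$. However, by \eqref{prop:bddst1} we have $\|D_jQ[z]-\im^{j-1}\varphi\|_{H^1_\gamma}\lesssim|z|^{2p}$, so the orthogonality conditions defining $\mathcal{H}_c[z]$ force $|\int_\R\eta\varphi\,dx|\lesssim|z|^{2p}\|\eta\|_{L^2_{-\gamma}}$, which is an even better bound than what is needed; combining this with the localization argument already used in the proof of Claim~\ref{lem:errorOrth} (the gain of $A^{-1}$ from $\zeta_A^{-1}-1$) gives $\<H_1w,w\>\geq -C_0A^{-2}\<(-\partial_x^2+\delta)w,w\>$ just as before, perhaps with an extra harmless factor involving $|z|$. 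So Lemma~\ref{lemcor:errmain1} survives with $\eta$ in place of $\xi$.

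Next I would estimate the genuinely nonlinear contributions. The term $\<(\psi_A'/2+\psi_A\partial_x)\eta, f(\eta)\>$ is handled word-for-word by Lemma~\ref{lem:errmain5}, bounding it by $\delta_0\|w'\|_{L^2}^2$. The term $\<(\psi_A'/2+\psi_A\partial_x)\eta, \tilde f(z,\eta)\>$ is handled by Lemma~\ref{lem:lipsc0} and Lemma~\ref{lem:errmain7}: the pointwise bound $|\tilde f(z,\eta)|\lesssim|Q[z]|^r|\eta|$ with $r=\min(2p,1)$ plus the weighted integration used in Lemma~\ref{lem:errmain7} gives a bound $\delta_0(\|w'\|_{L^2}^2+\|\<x\>^{-2}w\|_{L^2}^2)$ once $a_0A\ll1$, since $\|Q[z]\|$ is small and exponentially localized. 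The new feature is the potential term $\<(\psi_A'/2+\psi_A\partial_x)\eta, V[z]\eta\>$ with $V[z]\eta=-|Q[z]|^2\eta-2Q[z]\Re(\eta\overline{Q[z]})$, and this is in fact of the same structure as the $\tilde f$ term but quadratic rather than higher order in $Q[z]$: since $|V[z]\eta|\lesssim|Q[z]|^2|\eta|$ and $Q[z]$ is $O(|z|)$ and exponentially localized, the same manipulation (writing $\partial_x\eta\lesssim e^{2|x|/A}(|w'|+|w|)$, absorbing the $\psi_A$ weight against the decay of $|Q[z]|^2$) yields a bound $\lesssim |z|^2 A(\|w'\|_{L^2}^2+\|\<x\>^{-2}w\|_{L^2}^2)$, which is $\leq\delta_0(\|w'\|_{L^2}^2+\|\<x\>^{-2}w\|_{L^2}^2)$ for $|z|^2A\ll1$, i.e.\ after shrinking $a_0$.

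Finally I would assemble these pieces exactly as in the closing paragraph of the proof of Lemma~\ref{lemcor:errmain1}: the kinetic term contributes at least $(\frac14-C_0(\frac34A^{-2}+A^{-1}))\<(-\partial_x^2+\delta)w,w\>$, which by \eqref{232} dominates $\frac15\|w\|_X^2$ for $A$ large, and all the remaining terms ($V[z]$, $f(\eta)$, $\tilde f(z,\eta)$) are bounded in absolute value by $3\delta_0\|w\|_X^2$; choosing $\delta_0$ small (which fixes the threshold for $a_0$ and $A$) and using \eqref{232} once more to convert $\<(-\partial_x^2+\delta)w,w\>$ back into $\|w'\|_{L^2}^2+\|\<x\>^{-2}w\|_{L^2}^2$, we arrive at the lower bound $\frac{1}{12}(\|w'\|_{L^2}^2+\|\<x\>^{-2}w\|_{L^2}^2)$ claimed in \eqref{prop:virial1-}. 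I expect the only point requiring genuine (if routine) care to be the orthogonality estimate for $\eta\in\mathcal{H}_c[z]$ rather than $L^2_c$ — one must check that the $O(|z|^{2p})$ deviation of $D_jQ[z]$ from $\im^{j-1}\varphi$ does not destroy the $A^{-2}$ gain — but since $2p>1$ here this deviation is actually a bonus rather than an obstacle, and no new ideas beyond those in Section~\ref{sec:commutator} are needed.
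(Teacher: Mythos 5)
Your argument follows the paper's proof essentially verbatim: the paper likewise reduces everything to the scheme of Proposition \ref{prop:virial}, adds Lemma \ref{lem:errmain3} for the new $V[z]$ term, and asserts without proof that the coercivity of Lemma \ref{lemcor:errmain1} persists for $\eta\in\mathcal{H}_c[z]$ --- precisely the point you verify correctly via \eqref{prop:bddst1}, picking up a harmless extra $O(|z|^{2p})$ contribution to $\left|\int_\R w\varphi\,dx\right|$. The only cosmetic difference is that the paper bounds the $V[z]$ term by exploiting its symmetry and integrating by parts, obtaining $C_0|z|^2\<(-\partial_x^2+\delta)w,w\>$ with no factor of $A$, whereas your direct estimate produces $|z|^2A\|w\|_X^2$ and needs $|z|^2A\ll 1$; since $a_0$ is chosen after $A$, both versions close.
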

\proof
The proof of Proposition \ref{prop:virial-}  is exactly like the proof of Proposition \ref{prop:virial}, except that in \eqref{eq:virial2-} there is an additional term (the 2nd in the 2nd line)
dealt with in Lemma \ref{lem:errmain3} below, and that   the analogue Lemma \eqref{lemerrmain1}  continues to be true with  an $\eta \in \mathcal{ H}[z]$ instead of  $\xi \in \mathcal{ H}[0]$.   We skip the elementary proof of the last point.    \qed

\begin{lemma} \label{lem:errmain3}
 There exist  $a_0>0$  and $C_0>0$  s.t. for $|z|\le a_0 $ we have
  \begin{align*}   &
 \left |  \<   \( \frac{ \psi _A ' }{2}+ \psi _A \partial _x\) \eta  , V[z]\eta  \>  \right |  \le C _0 |z|^2   \< (-\partial ^2_x+\delta )w,w\> \text{ for  $w= \zeta _A \eta$  and all $\eta \in H^1$.}
\end{align*}
\end{lemma}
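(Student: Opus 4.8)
The plan is to estimate the two pieces of the bracket separately, exactly as in Lemma \ref{lem:errmain7}, using the exponential decay of $Q[z]$ from Proposition \ref{prop:bddst} together with the fact that $V[z]\eta$ is pointwise bounded by $C|Q[z]|^2|\eta|$. First I would observe that from \eqref{21-} and \eqref{prop:bddst1} we have $\|Q[z]\|_{L^\infty}\lesssim |z|$ and, more importantly, the pointwise bound $|Q[z](x)|\lesssim |z|\,e^{-\gamma_0|x|}$; combined with $|V[z]\eta|\le 3|Q[z]|^2|\eta|$ this gives $|V[z]\eta(x)|\lesssim |z|^2 e^{-2\gamma_0|x|}|\eta(x)|$.

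For the term with $\tfrac{\psi_A'}{2}=\tfrac12\zeta_A^2$, I would write everything in terms of $w=\zeta_A\eta$, so that $\zeta_A^2|\eta|^2=|w|^2$ and hence
\begin{align*}
\left|\<\tfrac{\psi_A'}{2}\eta,V[z]\eta\>\right|\lesssim |z|^2\int_\R e^{-2\gamma_0|x|}|w|^2\,dx\lesssim |z|^2\int_\R e^{-2\gamma_0|x|}\<x\>^4\,dx\ \|\<x\>^{-2}w\|_{L^2}^2,
\end{align*}
which is $\lesssim |z|^2\<(-\partial_x^2+\delta)w,w\>$ by \eqref{232}. For the term with $\psi_A\partial_x$, I would use the identity $\partial_x\eta=\partial_x(\zeta_A^{-1}w)=\zeta_A^{-1}(w'-\tfrac{\zeta_A'}{\zeta_A}w)$ and the bounds $\zeta_A^{-1}\le e^{|x|/A}$, $|\zeta_A'/\zeta_A|\lesssim A^{-1}$ (cf.\ the formulas for $\zeta_A',\zeta_A''$ in the proof of Lemma \ref{lem:lemma1}), together with $|\psi_A|\lesssim A\zeta_A^2$, to get
\begin{align*}
\left|\<\psi_A\partial_x\eta,V[z]\eta\>\right|\lesssim A|z|^2\int_\R e^{-2\gamma_0|x|}e^{2|x|/A}\zeta_A|\eta|\,(|w'|+|w|)\,dx\lesssim A|z|^2\int_\R e^{-\gamma_0|x|}|w|(|w'|+|w|)\,dx,
\end{align*}
where I absorbed $e^{2|x|/A}$ into $e^{-\gamma_0|x|}$ for $A$ large (so $e^{-2\gamma_0|x|+2|x|/A}\le e^{-\gamma_0|x|}$) and used $\zeta_A|\eta|=|w|$. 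Cauchy--Schwarz and \eqref{232} then bound this by $A|z|^2\|w'\|_{L^2}\|e^{-\gamma_0|x|/2}w\|_{L^2}+A|z|^2\|e^{-\gamma_0|x|/2}w\|_{L^2}^2\lesssim A|z|^2\<(-\partial_x^2+\delta)w,w\>$.

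Summing the two contributions gives the claimed bound with $C_0$ depending on $A$; since in the application $A$ is fixed first and then $a_0$ chosen small, this is harmless (alternatively one notes the $A$-dependence is only linear while the gain is $|z|^2$). I do not expect a genuine obstacle here: the only mild point is making sure the weight $e^{2|x|/A}$ coming from $\zeta_A^{-1}$ and $\partial_x\eta$ does not overwhelm the exponential decay $e^{-2\gamma_0|x|}$ of $|Q[z]|^2$, which is immediate for $A$ large since $2/A<\gamma_0$. Everything else is a repetition of the estimates already carried out in Lemmas \ref{lem:errmain5} and \ref{lem:errmain7}.
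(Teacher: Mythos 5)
Your argument is correct in substance, but it takes a genuinely different route from the paper's. You estimate both pieces of the bracket directly, converting $\partial_x\eta$ into $w'$ and $w$ and paying the price of the weights $e^{|x|/A}$ coming from $\zeta_A^{-1}$, exactly as in the paper's Lemma \ref{lem:errmain7}; the pointwise localization $|Q[z]|\lesssim |z|e^{-\gamma_0|x|}$ absorbs those weights for $A$ large, and \eqref{232} converts the outcome into $\<(-\partial_x^2+\delta)w,w\>$. The paper instead exploits the fact that $L:=\frac{\psi_A'}{2}+\psi_A\partial_x$ is antisymmetric while $V[z]$ is symmetric for the inner product \eqref{eq:bilf}, so that $\<L\eta,V[z]\eta\>=-\frac12\<\eta,[L,V[z]]\eta\>$ reduces, after one integration by parts, to a quadratic form in $\eta$ alone (no derivatives of $\eta$) against $\psi_A$ times derivatives of the exponentially localized coefficients built from $Q[z]$; this is slightly slicker in that it never needs the $\|w'\|_{L^2}$ part of the coercive norm, while your version does. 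Both routes give a constant $C_0$ depending on $A$, which is harmless since $A$ is fixed before $a_0$. Two small slips to repair in your write-up: (i) in the bound for the $\frac{\psi_A'}{2}$ term you should use $\sup_x\(e^{-2\gamma_0|x|}\<x\>^4\)$ rather than $\int_\R e^{-2\gamma_0|x|}\<x\>^4\,dx$ in front of $\|\<x\>^{-2}w\|_{L^2}^2$; (ii) the claim $|\psi_A|\lesssim A\zeta_A^2$ is false (as $|x|\to\infty$, $\psi_A$ tends to $\pm\int_0^\infty\zeta_A^2\,dt\sim A$ while $\zeta_A^2\to 0$) --- the correct and sufficient bound is $|\psi_A(x)|\le\int_0^{|x|}\zeta_A^2\,dt\lesssim A$, which is in fact all that your displayed chain of inequalities uses.
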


\begin{proof}  In view of the fact that $ V[z]$  is symmetric for our inner product,  we have the following:
 \begin{align*}  & \<   \( \frac{ \psi _A ' }{2}+ \psi _A \partial _x\) \eta   , V[z] \eta   \> =  2 ^{-1} \<   \eta  ,     (|Q[z]|^2)'  \eta  +  2(Q[z] ) ' \Re (\eta \overline{Q[z]} ) +  2 Q[z]  \Re (\eta \overline{(Q[z] ) '} )   \> \\& \lesssim     |z| ^2\< (-\partial ^2_x+\delta )w ,w \> .
\end{align*}
\end{proof}

\textit{End of the proof of Theorem \ref{thm:small en-}.}
 From  \eqref{prop:virial1-} we have like in
\begin{align}   \label{sec:estimates1-}&
 \| w \| ^{2} _{L^2((0,t), X ) }\lesssim  2 \epsilon ^2 + \| w \|  _{L^2((0,t), X ) }\ \| \dot z +\im E z \| _{L^2 ((0,t) ) }.
\end{align}
From \eqref{12-}  we have
\begin{align*}
\| \dot z +\im E z \| _{L^2 ((0,t) ) }  & \lesssim \| \dot z +\im E z \| _{L^2 ((0,t) ) }     \|  \<  x\> ^{-2}w \| _{L^\infty ((0,t),L^2) }  \\& + \(  \|  z\|  _{L^\infty ((0,t) ) }   + \| \<  x\> ^{-2}w \| _{L^\infty  ((0,t),L^2) }  \)   \| w \|  _{L^2((0,t), X ) } ^2\\&  \lesssim  \epsilon  \| \dot z +\im E z \| _{L^2 ((0,t) ) } + \epsilon ^2 \| w \|  _{L^2((0,t), X ) },
\end{align*}
and hence
\begin{align*}
\| \dot z +\im E z \| _{L^2 ((0,t) ) }  & \lesssim  \epsilon ^2 \| w \| _{L^2 ((0,t),L^2) }.
\end{align*}
Entering this in \eqref{sec:estimates1-} we conclude, for a fixed $C_0>0$,
\begin{align*}  &
\| w \|  _{L^2((0,t), X ) }  \le C_0  \epsilon .
\end{align*}
We set now $\rho (t):= z (t) e^{ \im   \int _0^t E[z(s)]  ds}$. Then we have
\begin{align*} & \| \dot  \rho \| _{L^1 ((0,t) ) } =
\| \dot z +\im E z \| _{L^1 ((0,t) ) }   \\&  \lesssim  \| \dot  \rho \| _{L^1 ((0,t) ) }     \| \<  x\> ^{-2}w \| _{L^\infty  ((0,t),L^2) }+  \( \|  z\|  _{L^\infty ((0,t) ) }  +   \| \<  x\> ^{-2}w \| _{L^\infty  ((0,t),L^2) }\)   \| w \|  _{L^2((0,t), X ) } ^2.
\end{align*}
From this we derive
\begin{align*} & \| \dot  \rho \| _{L^1 (\R _+  ) } \lesssim  \epsilon    \| w \|  _{L^2(\R _+ , X ) } ^2\lesssim  \epsilon ^3 .
\end{align*}
The existence of $\rho _+$ and of the limit \eqref{eq:small en22}  follow. This ends the the proof of  \eqref{eq:small en3-}-- \eqref{eq:small en22}. \qed

\section{Proof of Theorem  \ref{thm:dispersion}   } \label{sec:thm:dispersion}

We know that there exists a unique global strong solution $u\in C ^{0}( \R , H^1(\R ,\C ))$, and  furthermore that energy and mass are constant
\begin{align*} &  E(u(t))=  \frac{1}{2} \| \partial _x u(t) \| _{L^2(\R )}^2  +
\frac{|q|}{2}  |   u(t,0)  |  ^2   +\frac{ 1}{2} \int _\R G(| u(t) |^2) dx
   = E(u_0), \\&  Q(u(t))= \frac{1}{2} \| u(t) \| _{L^2(\R )}^2 =  Q(u_0) .
\end{align*}
By well posedness and a density argument, it is enough to focus on the case $u_0\in D(H_q)$, so that \begin{align*} u \in C^0 (\R , D(H_1)) \cap C^1(\R , L^2(\R, \C))   .
\end{align*}
Then we consider $\mathcal{J}(u)$, defined like in \eqref{Quadr}, and by the same argument of
Sect. \ref{sec:commutator}  we have
\begin{align*} & \frac{d}{dt}\mathcal{J}(u )=\<\im \dot u, \( \frac{ \psi _A ' }{2}+
\psi _A \partial _x\) u\> =
\<\im \dot u, \( \frac{ \psi _A ' }{2}+ \psi _A \partial _x\) u\>  \\& =
\<  (-\partial ^2_x  +|q|\delta (x) ) u  + g(|u|^{2 })  u , \( \frac{ \psi _A ' }{2}+ \psi _A \partial _x\) u\> .
\end{align*}
By  computations similar to Lemma \ref{lem:lemma1},  for  $w=\zeta _A\xi$
 and for the $V(x)$ in \eqref{Quadrmain1}, we have
 \begin{align*} &   \<  (-\partial ^2_x  +|q|\delta (x) ) u
   , \( \frac{ \psi _A ' }{2}+ \psi _A \partial _x\) u\>    =
    \< \(-\partial ^2_x  +\frac{|q|}{2}\delta (x) \) w   , w \>   +
      \frac{1}{2A}\< V  w   ,w   \>  \\&
      \ge  \frac{1}{2} \< \(-\partial ^2_x  +\frac{|q|}{2}\delta (x) \) w   , w \>
\end{align*}
for $A\ge A_0$ with $A_0$ a fixed sufficiently large constant.

\noindent On the other hand, by   $\psi _A '>0$  and the argument in   the first few lines of Lemma \ref{lem:errmain5},
\begin{align*} &    \<   g(|u|^{2 })  u    , \frac{ \psi _A ' }{2} u \>
 +     \< g(|u|^{2 })  u          , \psi _A \partial _x  u\> =\frac{1}{2}
    \<   g(|u|^{2 })  |u |^2- G(|u|^{2 })      ,  \psi _A '   \> \ge 0.
\end{align*}
Hence, for fixed  constants
\begin{align*} &
\int_0^T\| w(t) \| _{X } ^2\,dt \lesssim  \mathcal{J}(u(T)) - \mathcal{J}(u_0) \lesssim \sqrt{E (u_0) Q(u_0)} +  Q(u_0),
\end{align*}
which yields Theorem \ref{thm:dispersion}.

\qed

\appendix
\section{Appendix.}\label{sec:implicit}
We prove Proposition \ref{prop:bddst}.

\begin{lemma}\label{lem:R}
Set $R:=\(\left.\(H_1+\frac14\)\right|_{P_c L^2}\)^{-1}$.
Then, for sufficiently small $\gamma> 0$, $R$ is a bounded operator from $L^2_\gamma$ to $H^1_\gamma$.
\end{lemma}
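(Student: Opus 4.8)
The plan is to compute an explicit integral kernel for $R$ and then read off the weighted estimate by Schur's test and a Hilbert--Schmidt bound. First I note that, since $-1/4$ is the only eigenvalue of $H_1$ and $\ker(H_1+1/4)=\C\varphi$ with $\|\varphi\|_{L^2}=1$, the operator $R$ (defined a priori on $P_cL^2$) agrees on $P_cL^2$ with $\widetilde R:=RP_c$, a genuinely bounded operator on all of $L^2(\R)$; since moreover $P_c$ is bounded on $L^2_\gamma$ for $0<\gamma<1/2$ (then $\varphi\in L^2_\gamma$ and $f\mapsto\int_\R f\varphi\,dx$ is bounded on $L^2_\gamma$), it suffices to prove that $\widetilde R$ is bounded from $L^2_\gamma$ to $H^1_\gamma$ for $0<\gamma<1/2$. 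Using $\|v\|_{H^1_\gamma}\sim\|v\|_{L^2_\gamma}+\|\partial_x v\|_{L^2_\gamma}$, this amounts to showing that the integral kernels $\widetilde R(x,y)$ and $\partial_x\widetilde R(x,y)$, after conjugation by the weight $e^{\gamma|x|}$, produce operators bounded on $L^2(\R)$.

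To obtain the kernel I would start from the standard closed form of the resolvent of $H_q=-\partial_x^2-q\delta$: for $\Re\mu>0$,
\begin{equation*}
(H_q+\mu^2)^{-1}(x,y)=\frac{1}{2\mu}\,e^{-\mu|x-y|}+\frac{q}{2\mu(2\mu-q)}\,e^{-\mu(|x|+|y|)},
\end{equation*}
which one verifies directly: it is the free Green's function $\tfrac{1}{2\mu}e^{-\mu|x-y|}$ corrected by a ($y$-dependent) multiple of the decaying solution $e^{-\mu|x|}$, fixed by the matching condition $\partial_xG(0^+,y)-\partial_xG(0^-,y)=-qG(0,y)$. Specialising to $q=1$ and setting $z=-\mu^2$ (so the pole at $2\mu=1$ is the eigenvalue $z=-1/4$, and $-\tfrac14-z=(\mu-\tfrac12)(\mu+\tfrac12)$), the spectral decomposition $L^2=\C\varphi\oplus L^2_c$ gives
\begin{equation*}
\widetilde R(x,y)=\lim_{z\to-1/4}\Big((H_1-z)^{-1}(x,y)-\frac{\varphi(x)\varphi(y)}{-\tfrac14-z}\Big).
\end{equation*}
Expanding in $\mu$ about $1/2$ and resolving the removable singularity yields
\begin{equation*}
\widetilde R(x,y)=e^{-|x-y|/2}-\tfrac12\bigl(1+|x|+|y|\bigr)e^{-(|x|+|y|)/2},
\end{equation*}
which one sanity-checks via $\int_\R\widetilde R(x,y)\varphi(y)\,dy=0$. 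Differentiating (the corners of $|x-y|$ and $|x|$ give only bounded jumps, no $\delta$), both $\widetilde R(x,y)$ and $\partial_x\widetilde R(x,y)$ are dominated by $C\bigl(e^{-|x-y|/2}+(1+|x|+|y|)e^{-(|x|+|y|)/2}\bigr)$.

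The weighted estimate is then routine. Conjugating the first summand of this bound by $e^{\gamma|x|}$ on the left and $e^{-\gamma|x|}$ on the right, the inequality $\bigl|\,|x|-|y|\,\bigr|\le|x-y|$ gives $e^{\gamma|x|}e^{-|x-y|/2}e^{-\gamma|y|}\le e^{-(\tfrac12-\gamma)|x-y|}$, an $L^1$ convolution profile when $\gamma<1/2$, so Schur's test bounds it on $L^2$ by $\lesssim(\tfrac12-\gamma)^{-1}$. The second summand becomes $\le\bigl[(1+|x|)e^{-(\tfrac12-\gamma)|x|}\bigr]\bigl[(1+|y|)e^{-(\tfrac12+\gamma)|y|}\bigr]$, a product of an $L^2_x$- and an $L^2_y$-function when $\gamma<1/2$, hence a Hilbert--Schmidt kernel. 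Adding the two contributions, $e^{\gamma|x|}\widetilde R\,e^{-\gamma|x|}$ and $e^{\gamma|x|}\partial_x\widetilde R\,e^{-\gamma|x|}$ are bounded on $L^2$ (differentiation under the integral being justified by the same pointwise bounds), which is exactly the claim. The only delicate point is getting $\widetilde R$ exactly right --- in particular the linear correction $\tfrac12(|x|+|y|)e^{-(|x|+|y|)/2}$, which comes from the $\mu$-derivative in the residue subtraction; after that everything is elementary, and the restriction $\gamma<1/2$ is precisely the distance from $-1/4$ to the continuous spectrum $[0,\infty)$, equivalently the decay rate of $\varphi$.
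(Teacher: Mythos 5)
Your proof is correct, but it takes a genuinely different route from the paper's. The paper handles the unweighted case $\gamma=0$ by citing Lemma 2.12 of \cite{MMS1} and then upgrades to $\gamma>0$ by a soft conjugation/commutator argument: with $\mu_{\gamma,A}=e^{\gamma\sqrt{1+|x|^2}}\chi(x/A)$ it writes $H_1\mu_{\gamma,A}Ru=[H_1,\mu_{\gamma,A}]Ru+\mu_{\gamma,A}u$, uses $\|[H_1,\mu_{\gamma,A}]v\|\le C\gamma\|\mu_{\gamma,A}v\|_{H^1}$ to absorb the commutator for small $\gamma$, and lets $A\to\infty$. You instead compute the reduced resolvent kernel explicitly from the closed-form resolvent of $-\partial_x^2-\delta$ and estimate it by Schur's test plus a Hilbert--Schmidt bound. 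I checked the computation: the residue subtraction does give $\widetilde R(x,y)=e^{-|x-y|/2}-\tfrac12(1+|x|+|y|)e^{-(|x|+|y|)/2}$, this kernel annihilates $\varphi$, the first $x$-derivative produces only bounded sign factors (no $\delta$), and the weighted bounds via $\bigl||x|-|y|\bigr|\le|x-y|$ and the factorization $(1+|x|+|y|)\le(1+|x|)(1+|y|)$ are correct, as is the preliminary reduction to $\widetilde R=RP_c$ using boundedness of $P_c$ on $L^2_\gamma$. What each approach buys: yours is self-contained (no appeal to the unweighted case), exploits the exact solvability of the $\delta$-potential, and delivers the sharp range $\gamma<1/2$ with explicit constants; the paper's argument is kernel-free and would survive replacing $\delta$ by any potential for which the unweighted mapping property of $R$ is already known, which is in the spirit of the rest of the paper. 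One cosmetic slip in your closing remark: the threshold $1/2$ is the square root of the distance from $-1/4$ to the essential spectrum (equivalently the decay rate of $\varphi$), not that distance itself; this does not affect the argument.
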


\begin{proof}
For  case $\gamma=0$  see Lemma 2.12 of \cite{MMS1}.
For the case $\gamma>0$, set $\chi_A(x):=\chi(x/A)$ where $\chi$ is given in \eqref{chi}.
Set $\mu_{\gamma,A}(x):=e^{\gamma   \sqrt{1+|x|^2} }\chi_A(x)$.
Then, multiplying $H_1 Ru =u$ by   $\mu_A$, we obtain
\begin{align*}
H_1 \mu_{\gamma,A}Ru =[H_1,\mu_{\gamma,A}]Ru + \mu_{\gamma,A}u.
\end{align*}
Notice that there exists a $C>0$ s.t.
\begin{align*}
\|[H_1,\mu_{\gamma,A}]u\|\leq C\ \gamma  \ \|\mu_{\gamma,A}u\|_{H^1} \text{ for all $\gamma\in[0,1]$ and $A\in[1,\infty)$.}
\end{align*}
This implies that for sufficiently small $\gamma>0$,
\begin{align*}
\|\mu_{\gamma,A}Ru\|_{H^1}\lesssim\|\mu_{\gamma,A}u\|_{L^2}\lesssim \|u\|_{L^2_\gamma}.
\end{align*}
Thus, taking $A\to \infty$, we have $\|Ru\|_{H^1_\gamma}\lesssim \|u\|_{L^2_\gamma}$.
\end{proof}

We consider  $\tilde h:[0,1]\times \R \to \R$ defined by
\begin{align}\label{def:tilh}
\tilde h(\rho, \mu):=g(\rho \mu^2)\mu.
\end{align}
For $\gamma<\frac12$, we set $h:[0,1]\times H^1_\gamma(\R,\R) \to L^2_\gamma(\R,\R)$ by
\begin{align}\label{def:h}
h(\rho,q) (x):=\tilde h(\rho,q(x))=g(\rho q(x)^2)q(x).
\end{align}
Notice that $q$ in \eqref{def:tilh} is a number but $q$ in \eqref{def:h} is a function.

\begin{lemma}\label{lem:esttilh}
We have $\tilde h\in C([0,1]\times \R,\R)\cap C^1((0,1]\times \R,\R)$ and the estimates
\begin{align}\label{est:tilh}
|\tilde h(\rho,\mu)|\lesssim \rho^p |\mu|^{2p+1}
\end{align}
and
\begin{align}\label{est:diftilh}
|\partial_\rho \tilde h(\rho,\mu)|\lesssim \rho^{p-1} |\mu|^{2p+1},\quad
|\partial_\mu \tilde h(\rho,\mu)|\lesssim \rho^p |\mu|^{2p}.
\end{align}
Furthermore, for $\rho\mu\neq 0$, $h$ is three times differentiable and we have
\begin{equation}\label{est:twicedifftildh}
\begin{aligned}
|\partial_\rho^2 \tilde h(\rho,\mu)|\lesssim \rho^{p-2}|\mu|^{2p+1},\quad
|\partial_\rho \partial_\mu \tilde h(\rho,\mu)|\lesssim \rho^{p-1}|\mu|^{2p},\quad
|\partial_\mu^2 \tilde h(\rho,\mu)|\lesssim \rho^{p}|\mu|^{2p-1}
\end{aligned}
\end{equation}
and
\begin{equation}\label{est:thirddifftildh}
\begin{aligned}
|\partial_\rho^3 \tilde h(\rho,\mu)|\lesssim \rho^{p-3}|\mu|^{2p+1},\quad |\partial_\mu \partial_\rho^3 h(\rho,\mu)|\lesssim \rho^{p-1}\mu^{2p}.
\end{aligned}
\end{equation}
If $p>\frac12$, we have $\tilde h\in  C^2((0,1]\times \R,\R)$.
\end{lemma}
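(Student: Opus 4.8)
The plan is to reduce every assertion to the pointwise hypothesis \eqref{nonlinearity} on $g$ via the chain rule, treating the locus $\{\rho\mu=0\}$ by a separate, elementary argument.

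On the open set $\{(\rho,\mu)\in(0,1]\times\R:\mu\neq 0\}$ the argument $s=\rho\mu^2$ lies in $(0,1]$, so $g$ is $C^3$ near $s$ and $\tilde h(\rho,\mu)=g(\rho\mu^2)\mu$ is a $C^3$ composition there. I would record the formulas $\partial_\rho\tilde h=g'(\rho\mu^2)\mu^3$, $\partial_\mu\tilde h=g(\rho\mu^2)+2\rho\mu^2 g'(\rho\mu^2)$, and inductively $\partial_\rho^k\tilde h=g^{(k)}(\rho\mu^2)\mu^{2k+1}$, with the mixed second and third derivatives obtained by one further application of the Leibniz and chain rules. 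Inserting $|g^{(k)}(s)|\le C s^{p-k}$ with $s=\rho\mu^2$ and counting powers then yields all of \eqref{est:tilh}, \eqref{est:diftilh}, \eqref{est:twicedifftildh}, \eqref{est:thirddifftildh}; for instance $|\partial_\rho^k\tilde h|\lesssim(\rho\mu^2)^{p-k}|\mu|^{2k+1}=\rho^{p-k}|\mu|^{2p+1}$, and every other monomial appearing (of type $\rho\mu^2 g'$, $\rho\mu\,g'$, $\mu^2 g'$, $\rho^2\mu^3 g''$, $\rho\mu^4 g''$, and so on) is estimated in exactly the same way. This step is pure bookkeeping, and I would present it compactly rather than term by term.

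It remains to analyze the boundary. Since $g(0)=0$ we have $\tilde h(\rho,0)=\tilde h(0,\mu)=0$, and \eqref{est:tilh} together with $2p+1>0$ forces $\tilde h\to 0$ there; hence $\tilde h\in C([0,1]\times\R)$. For $\tilde h\in C^1((0,1]\times\R)$ the only issue is $\mu=0$ with a fixed $\rho_0>0$: the difference quotient $(\tilde h(\rho_0,\mu)-\tilde h(\rho_0,0))/\mu=g(\rho_0\mu^2)\to g(0)=0$ gives $\partial_\mu\tilde h(\rho_0,0)=0$, while $\partial_\rho\tilde h(\rho_0,0)=0$ since $\tilde h(\cdot,0)\equiv 0$; the bounds \eqref{est:diftilh}, whose exponents in $|\mu|$ are $2p+1>0$ and $2p>0$, show both partials extend continuously to $\mu=0$. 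This works for every $p>0$.

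Finally, for $p>1/2$ one runs the same argument one order higher. Here $\partial_\rho\tilde h(\cdot,0)\equiv 0$ and $\partial_\mu\tilde h(\cdot,0)\equiv 0$, so the only second partial whose existence at $\mu=0$ is not automatic is $\partial_\mu^2\tilde h$: the quotient $\partial_\mu\tilde h(\rho_0,\mu)/\mu$ is of size $\rho_0^p|\mu|^{2p-1}$ by \eqref{est:diftilh}, which tends to $0$ precisely when $2p-1>0$, giving $\partial_\mu^2\tilde h(\rho_0,0)=0$; the other second partials vanish on $\{\mu=0\}$ trivially. Continuity of all second partials at $\mu=0$ then follows from \eqref{est:twicedifftildh}, the critical term again being $\rho^p|\mu|^{2p-1}$, which needs $p>1/2$. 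Hence $\tilde h\in C^2((0,1]\times\R)$, and this makes transparent that $p=1/2$ is exactly the threshold. The main (indeed the only genuine) obstacle is this continuity-at-$\mu=0$ analysis; away from $\{\mu=0\}$ the lemma is just the chain rule plus power counting.
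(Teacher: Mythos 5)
Your proposal is correct and follows essentially the same route as the paper: compute the derivatives by the chain rule on $\{\mu\neq 0\}$, read off the estimates from \eqref{nonlinearity} by power counting, and then establish differentiability and continuity of the partials across $\{\mu=0\}$ via difference quotients, with the term $\rho^p|\mu|^{2p-1}$ in $\partial_\mu^2\tilde h$ correctly identified as the reason for the threshold $p>\frac12$. No gaps.
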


\begin{proof}
By the definition of $\tilde h$, we have $C([0,1]\times \R,\R)\cap C^3((0,1]\times (\R\setminus \{0\}),\R)$.
Also, \eqref{est:tilh} is immediate from \eqref{nonlinearity} and \eqref{def:tilh}.
At $(\rho,q)=(\rho,0)$ with $\rho>0$, $\tilde h$ is differentiable w.r.t.\ $\rho$ and $\mu$ having $\partial_\rho \tilde h=\partial_q \tilde h=0$.
One can see this easily from $\tilde h(\rho,0)=0$ and
\begin{align*}
\tilde h(\rho+\epsilon,0)=0,\ |\tilde h(\rho,\epsilon)|\lesssim \epsilon^{2p+1}.
\end{align*}
Further, since for $\mu \neq 0$,
\begin{align}\label{eq:honedr}
\partial_\rho \tilde h(\rho,q)=g'(\rho\mu^2)\mu^3,\quad
\partial_\mu \tilde h(\rho,\mu)= g(\rho\mu^2)+2\rho g'(\rho\mu^2)\mu^2,
\end{align}
we have \eqref{est:diftilh} from \eqref{nonlinearity}, which imply that $\partial_\rho \tilde h$ and $\partial_\mu \tilde h$ are continuous at $(\rho,0)$.

Differentiating \eqref{eq:honedr} for $\rho,\mu\neq 0$, we have
\begin{align*}
\partial_\rho^2 \tilde h(\rho,q)&=g''(\rho\mu^2)\mu^5, \quad
\partial_\rho \partial_\mu \tilde h(\rho,\mu)=3g'(\rho\mu^2)\mu^2+2\rho g''(\rho\mu^2)\mu^4,\\
\partial_\mu^2 \tilde h(\rho,\mu)&=6\rho g'(\rho\mu^2)\mu+4\rho^2 g''(\rho\mu^2)\mu^3.
\end{align*}
and
\begin{align*}
\partial_\rho^3 \tilde h(\rho,q)=g'''(\rho\mu^2)\mu^7,\quad \partial_\mu \partial_\rho^2 \tilde h(\rho,q)=2 \rho g'''(\rho \mu^2) \mu^6.
\end{align*}
This implies that $\rho,\mu\neq 0$, we have \eqref{est:twicedifftildh} and \eqref{est:thirddifftildh}.
By \eqref{est:twicedifftildh}, for the case $p>1/2$, we see that $h$ is twice continuously differentiable at $(\rho,0)$ ($\rho\neq 0$) with
$$\partial_\rho^2 \tilde h(\rho,0)=\partial_\rho\partial_\mu \tilde h(\rho,0)=\partial_\mu^2 \tilde h(\rho,0)=0.$$
Therefore, we have the conclusion.
\end{proof}

\begin{lemma}\label{lem:diffh}
Let $\gamma\geq 0$.
Let $\tilde h$, $h$ be the functions given in \eqref{def:tilh} and \eqref{def:h}.
Then,
\begin{align}\label{hisC1}
h\in C([0,1]\times H^1_\gamma,L^2_\gamma)\cap C^1((0,1]\times H^1_\gamma,L^2_\gamma)
\end{align}
and
\begin{align}\label{diff:form}
\partial_\rho h(\rho,q)(x)=\partial_\rho \tilde h(\rho,q(x)),\  \(\partial_q h(\rho,q)v\)(x)=\partial_\mu \tilde h(\rho,q(x))v(x).
\end{align}
\end{lemma}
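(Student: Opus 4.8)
The plan is to treat $h$ as a superposition (Nemytskii) operator and to run the classical Nemytskii continuity/differentiability argument, the only care needed being the bookkeeping with the weight $e^{\gamma|x|}$ and the mild singularity of $\partial_\rho\tilde h$ at $\rho=0$ (which is why the $C^1$ statement is only claimed on $(0,1]$). First I would record the embedding $H^1_\gamma\hookrightarrow L^\infty$: since $e^{\gamma|x|}\geq 1$ one has $\|q\|_{L^\infty}\leq\|e^{\gamma|x|}q\|_{L^\infty}\lesssim\|q\|_{H^1_\gamma}$. Combining this with \eqref{est:tilh} and the pointwise bound $e^{\gamma|x|}|q(x)|^{2p+1}\leq\|q\|_{L^\infty}^{2p}\,e^{\gamma|x|}|q(x)|$ gives $\|h(\rho,q)\|_{L^2_\gamma}\lesssim\rho^p\|q\|_{H^1_\gamma}^{2p}\|q\|_{L^2_\gamma}$, so $h$ maps $[0,1]\times H^1_\gamma$ into $L^2_\gamma$ and is bounded on bounded sets. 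The identical estimate with $\partial_\mu\tilde h$ and $\partial_\rho\tilde h$ in place of $\tilde h$, now using \eqref{est:diftilh}, shows that the two right-hand sides of \eqref{diff:form} define an element $\partial_\rho h(\rho,q)\in L^2_\gamma$ and a bounded linear map $v\mapsto\partial_q h(\rho,q)v$ in $\mathcal L(H^1_\gamma,L^2_\gamma)$, with norms controlled by $\rho^{p-1}\|q\|_{H^1_\gamma}^{2p}$ and $\rho^{p}\|q\|_{H^1_\gamma}^{2p}$ respectively; thus $(t,v)\mapsto t\,\partial_\rho h(\rho,q)+\partial_q h(\rho,q)v$ is the natural candidate for $Dh(\rho,q)$.

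\emph{Continuity on $[0,1]\times H^1_\gamma$.} Next I would take $(\rho_n,q_n)\to(\rho,q)$, set $M:=\sup_n\|q_n\|_{L^\infty}<\infty$, and split $h(\rho_n,q_n)-h(\rho,q)=(h(\rho_n,q_n)-h(\rho_n,q))+(h(\rho_n,q)-h(\rho,q))$. Since $\tilde h(\rho,\cdot)\in C^1(\R)$ with $|\partial_\mu\tilde h(\rho,\mu)|\lesssim\rho^p|\mu|^{2p}$ (Lemma \ref{lem:esttilh} and \eqref{eq:honedr}), the mean value theorem bounds the first term pointwise by $CM^{2p}|q_n(x)-q(x)|$, hence in $L^2_\gamma$ by $CM^{2p}\|q_n-q\|_{L^2_\gamma}\to0$. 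In the second term the integrand goes to $0$ pointwise by continuity of $\tilde h$ and is dominated by $e^{\gamma|x|}(|\tilde h(\rho_n,q(x))|+|\tilde h(\rho,q(x))|)\lesssim M^{2p}e^{\gamma|x|}|q(x)|\in L^2$, so dominated convergence applies and $h\in C([0,1]\times H^1_\gamma,L^2_\gamma)$.

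\emph{The $C^1$ claim on $(0,1]\times H^1_\gamma$.} Here I would restrict $\rho$ to a compact $[\rho_0,1]\subset(0,1]$, where $\partial_\rho\tilde h$ and $\partial_\mu\tilde h$ are continuous and satisfy $|\partial_\rho\tilde h|\lesssim_{\rho_0}|\mu|^{2p+1}$, $|\partial_\mu\tilde h|\lesssim_{\rho_0}|\mu|^{2p}$, so the $\rho^{p-1}$ prefactor is harmless. That \eqref{diff:form} gives the G\^ateaux derivative follows from $t^{-1}(h(\rho+t,q)-h(\rho,q))(x)=\int_0^1\partial_\rho\tilde h(\rho+st,q(x))\,ds$, which converges pointwise to $\partial_\rho\tilde h(\rho,q(x))$ and, for $|t|\leq\rho_0/2$, is dominated in $L^2_\gamma$ by $C_{\rho_0}\|q\|_{L^\infty}^{2p}e^{\gamma|x|}|q(x)|$; dominated convergence gives convergence in $L^2_\gamma$, and the same argument with $\int_0^1\partial_\mu\tilde h(\rho,q(x)+stv(x))v(x)\,ds$ handles the $q$-direction. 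Continuity of $(\rho,q)\mapsto(\partial_\rho h(\rho,q),\partial_q h(\rho,q))$ into $L^2_\gamma\times\mathcal L(H^1_\gamma,L^2_\gamma)$ I would then get by the splitting-plus-domination scheme above: for the $L^2_\gamma$-component, the mean value theorem with the bound \eqref{est:twicedifftildh} on $\partial_\mu\partial_\rho\tilde h$ in the term where $q$ moves and dominated convergence in the term where only $\rho$ moves; for the operator component, the estimate $\|(\partial_q h(\rho_1,q_1)-\partial_q h(\rho_2,q_2))v\|_{L^2_\gamma}\leq\|\partial_\mu\tilde h(\rho_1,q_1(\cdot))-\partial_\mu\tilde h(\rho_2,q_2(\cdot))\|_{L^\infty}\|v\|_{L^2_\gamma}$ together with uniform continuity of $\partial_\mu\tilde h$ on $[\rho_0,1]\times[-M-1,M+1]$. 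Since the G\^ateaux derivative then exists near every point and is continuous, $h$ is Fr\'echet $C^1$, which is \eqref{hisC1}.

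\emph{Main obstacle.} The one genuinely delicate point, recurring at every step, is the weighted domination: $e^{\gamma|x|}|q_n|^{2p+1}$ cannot be dominated by $\|e^{\gamma|x|}q_n\|_{L^\infty}$ (not in $L^2$), nor by a single fixed function while $q_n$ varies. The fix, used throughout, is to peel off the uniformly bounded factor $\|q_n\|_{L^\infty}^{2p}$ and, after splitting each difference so that only one of $\rho$, $q$ varies at a time, to dominate by the fixed $L^2_\gamma$ function $e^{\gamma|x|}|q|$ (or $e^{\gamma|x|}|v|$). Keeping $\rho\geq\rho_0>0$ in the last step is precisely what makes the $\rho$-derivative bounds, with their $\rho^{p-1}$ singularity, usable — and explains why $C^1$ regularity stops at $\rho=0$.
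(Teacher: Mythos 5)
Your proof is correct and follows essentially the same Nemytskii-operator route as the paper: the same $H^1_\gamma\hookrightarrow L^\infty$ embedding, the same pointwise bounds from Lemma \ref{lem:esttilh}, and the same scheme of splitting differences so that only one of $\rho$, $q$ varies at a time. The only (harmless) deviation is in the $\rho$-derivative, where the paper obtains Fr\'echet differentiability directly with an $O(|\epsilon|)$ rate via a Taylor expansion using the bound on $\partial_\rho^2\tilde h$ (restricted to $\{x:q(x)\neq 0\}$), while you establish the G\^ateaux derivative by dominated convergence and upgrade to Fr\'echet $C^1$ through continuity of the derivative map --- both arguments are valid, and yours avoids the restriction to the set where $q\neq 0$.
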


\begin{proof}
First of all, for $q\in H^1_\gamma$, we have $h(\rho,q)\in L^2_\gamma$.
Indeed, from \eqref{est:tilh},
\begin{align}\label{est:l2h}
\|h(\rho,q)\|_{L^2_\gamma}=\|\tilde h(\rho,q(\cdot))\|_{L^2_\gamma}\lesssim \rho^p \|q\|_{L^\infty}^{2p}\|q\|_{L^2_\gamma}\lesssim \rho^p \|q\|_{H^1_\gamma}^{2p+1},
\end{align}
which implies also that $h$ is continuous  at $\{0\}\times H^1$.

Next, we show \eqref{diff:form}.
For $(\rho,q)\in (0,1]\times H^1_\gamma$, and $|\epsilon|<\rho$,
\begin{align*}
&|\epsilon|^{-1}\| h(\rho+\epsilon,q)-h(\rho,q)-\epsilon \partial_\rho \tilde h(\rho,q)\|_{L^2_\gamma}=|\epsilon|^{-1}\| \tilde h(\rho+\epsilon,q)-\tilde h(\rho,q)-\epsilon \partial_\rho \tilde h(\rho,q)\|_{L^2_\gamma}\\&
=\int_0^1\| \partial_\rho \tilde h(\rho+\tau_1\epsilon,q)\,d\tau- \partial_\rho \tilde h(\rho,q)\|_{L^2_\gamma}\,d\tau_1\\&
\lesssim \int_0^1\int_0^1\tau_1|\epsilon|\| \partial_\rho^2 \tilde h(\rho+\tau_1\tau_2\epsilon,q)\|_{L^2_\gamma(\{x\in\R|q(x)\neq 0\})}\,d\tau_1d\tau_2\\&\lesssim |\epsilon| \int_0^1\int_0^1 \tau_1(\rho+\tau_1\tau_2 \epsilon)^{p-2}\,d\tau_1d\tau_2 \|q\|_{H^1_\gamma}^{2p+1}\to 0 \text{ as }  \epsilon\to 0.
\end{align*}
Similarly,
\begin{align}\nonumber
&\|v\|_{H^1_\gamma}^{-1}\| h(\rho,q+v)-h(\rho,q)-\partial_\mu \tilde h(\rho,q)v \|_{L^2_\gamma}=\|v\|_{H^1_\gamma}^{-1}\| \int_0^1 \(\partial_\mu \tilde h(\rho,q+\tau v)-\partial_\mu \tilde h(\rho,q )\)v\|_{L^2_\gamma}\\&\leq
\sup_{\tau\in[0,1]} \| \partial_\mu \tilde h(\rho,q+\tau v)-\partial_\mu \tilde h(\rho,q )\|_{L^\infty}\to 0 \text{ as } \|v\|_{H^1_\gamma}\to 0.\label{eq:hC11}
\end{align}
Here  we have used the fact that $\partial_\mu \tilde h$ is uniformly continuous in $[\frac\rho2,1]\times [-\|q\|_{L^\infty}-1,\|q\|_{L^\infty}+1]$ and $\|v\|_{L^\infty}\to 0$ if $\|v\|_{H^1_\gamma}\to 0$.
By similar estimate, we see that $\partial_\rho h$ and $\partial_q h$ are continuous in $(0,1]\times H^1_\gamma$.
From this, we have \eqref{hisC1}.
\end{proof}

\begin{lemma}\label{lem:secondh}
Let $p>1/2$.
Let $\tilde h$, $h$ be the functions given in \eqref{def:tilh} and \eqref{def:h}.
Then,
\begin{align}\label{hisC1+}
h\in C([0,1]\times H^1_\gamma,L^2_\gamma)\cap C^2((0,1]\times H^1_\gamma,L^2_\gamma),
\end{align}
and
\begin{equation}\label{seconddiffh}
\begin{aligned}
&\partial_\rho^2 h(\rho,q)(x)=\partial_\rho^2 \tilde h(\rho,q(x)),\  \(\partial_\rho\partial_q h(\rho,q)v\)(x)=\partial_\rho\partial_\mu \tilde h(\rho,q(x))v(x),\\&
\partial_q^2 h(\rho,q)(v,w)(x)=\partial_\mu^2 \tilde h(\rho,q(x))v(x)w(x).
\end{aligned}
\end{equation}
\end{lemma}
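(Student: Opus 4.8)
The plan is to run the argument of Lemma~\ref{lem:diffh} one order higher. Starting from the first-order formulas \eqref{diff:form}, I would differentiate $\partial_\rho h$ and $\partial_q h$ once more in $\rho$ and in $q$, identify the candidate second derivatives as the pointwise compositions displayed in \eqref{seconddiffh}, and control the remainders using the second- and third-order bounds \eqref{est:twicedifftildh}--\eqref{est:thirddifftildh} of Lemma~\ref{lem:esttilh}, together with the Sobolev embedding $H^1(\R)\hookrightarrow L^\infty(\R)$ and the implication $\|v\|_{H^1_\gamma}\to 0\Rightarrow\|v\|_{L^\infty}\to 0$. The $C^2$ regularity is asserted only on $(0,1]\times H^1_\gamma$, which matches the fact recorded at the end of Lemma~\ref{lem:esttilh} that for $p>1/2$ one has $\tilde h\in C^2((0,1]\times\R)$ but only $C^3$ away from $\{\mu=0\}$.

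For the purely radial second derivative I would fix $\rho\in(0,1]$, $q\in H^1_\gamma$, take $|\epsilon|<\rho/2$, and Taylor-expand $\partial_\rho\tilde h(\,\cdot\,,q(x))$ to second order in $\rho$, exactly as in the proof of Lemma~\ref{lem:diffh}: on $\{x:q(x)\neq 0\}$ the remainder is dominated by $\partial_\rho^3\tilde h$ via \eqref{est:thirddifftildh}, while on $\{x:q(x)=0\}$ one has $\partial_\rho h(\rho,0)\equiv 0$ in $\rho$, so the difference quotient vanishes identically there. This produces a bound
\begin{equation*}
|\epsilon|^{-1}\bigl\|\partial_\rho h(\rho+\epsilon,q)-\partial_\rho h(\rho,q)-\epsilon\,\partial_\rho^2\tilde h(\rho,q(\cdot))\bigr\|_{L^2_\gamma}\lesssim |\epsilon|\,(\rho/2)^{p-3}\,\|q\|_{H^1_\gamma}^{2p+1}\longrightarrow 0,
\end{equation*}
which gives the first formula in \eqref{seconddiffh}. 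For the mixed and the pure $q$-derivatives, using that $\tilde h$ is $C^2$ in $\mu$ on $(0,1]\times\R$, I would write pointwise in $x$
\begin{align*}
\partial_\rho\tilde h(\rho,q+v)-\partial_\rho\tilde h(\rho,q)&=\int_0^1\partial_\mu\partial_\rho\tilde h(\rho,q+\tau v)\,v\,d\tau,\\
\partial_\mu\tilde h(\rho,q+v)-\partial_\mu\tilde h(\rho,q)&=\int_0^1\partial_\mu^2\tilde h(\rho,q+\tau v)\,v\,d\tau,
\end{align*}
subtract the candidate linear terms $\partial_\mu\partial_\rho\tilde h(\rho,q)v$ and $\partial_\mu^2\tilde h(\rho,q)v$, and estimate what is left, after multiplying by a generic $w$ and taking $L^2_\gamma$ norms, by
\begin{equation*}
\sup_{\tau\in[0,1]}\bigl\|\partial_\mu\partial_\rho\tilde h(\rho,q+\tau v)-\partial_\mu\partial_\rho\tilde h(\rho,q)\bigr\|_{L^\infty}\,\|v\|_{L^\infty}\,\|w\|_{L^2_\gamma}
\end{equation*}
(and likewise with $\partial_\mu^2\tilde h$ for the $\partial_q^2$ piece), which is $o(\|v\|_{H^1_\gamma})$ uniformly in $\|w\|_{H^1_\gamma}\le 1$ provided $\partial_\mu\partial_\rho\tilde h$ and $\partial_\mu^2\tilde h$ are uniformly continuous on $[\rho/2,1]\times[-\|q\|_{L^\infty}-1,\|q\|_{L^\infty}+1]$. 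Boundedness of the bilinear map $(v,w)\mapsto\partial_\mu^2\tilde h(\rho,q(\cdot))\,v\,w$ into $L^2_\gamma$ follows from $|\partial_\mu^2\tilde h(\rho,\mu)|\lesssim\rho^p|\mu|^{2p-1}$ and $H^1\hookrightarrow L^\infty$, giving $\lesssim\rho^p\|q\|_{H^1_\gamma}^{2p-1}\|v\|_{H^1_\gamma}\|w\|_{H^1_\gamma}$, which already uses $2p-1\ge 0$.

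It then remains to check continuity of $\partial_\rho^2 h$, $\partial_\rho\partial_q h$ and $\partial_q^2 h$ on $(0,1]\times H^1_\gamma$; this reduces, exactly as in \eqref{eq:hC11}, to the uniform continuity of $\partial_\rho^2\tilde h$, $\partial_\mu\partial_\rho\tilde h$ and $\partial_\mu^2\tilde h$ on compact subsets of $(0,1]\times\R$, together with $\|q_n-q\|_{H^1_\gamma}\to 0\Rightarrow\|q_n-q\|_{L^\infty}\to 0$. The single delicate point — and the place where $p>1/2$ is genuinely used — is that these mixed and second $\mu$-derivatives must be continuous \emph{across} $\{\mu=0\}$ (with value $0$ there), which is precisely the last assertion of Lemma~\ref{lem:esttilh}; if instead $p=1/2$, then $\partial_\mu^2\tilde h$ is only bounded, not continuous at $\mu=0$ (think of $\tilde h\sim\rho^{1/2}|\mu|\mu$, for which $\partial_\mu^2\tilde h\sim 2\,\mathrm{sgn}\,\mu$), the $L^\infty$-remainder estimate above fails, and the $\partial_q^2$ difference quotient need not converge in $L^2_\gamma$. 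I expect this continuity-across-the-origin issue, rather than any of the size estimates, to be the only real obstacle. Assembling the three computations and the continuity statements yields both \eqref{hisC1+} and \eqref{seconddiffh}.
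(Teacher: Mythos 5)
Your proposal is correct and follows exactly the route the paper intends: the paper's own proof of this lemma is omitted with the one-line remark that the argument is similar to that of Lemma~\ref{lem:diffh}, and your write-up is precisely that argument carried one order higher, using \eqref{est:twicedifftildh}--\eqref{est:thirddifftildh} together with the Sobolev embedding and the uniform-continuity step of \eqref{eq:hC11}. Your identification of the continuity of $\partial_\mu^2\tilde h$ across $\mu=0$ as the one place where $p>1/2$ is genuinely needed matches the last assertion of Lemma~\ref{lem:esttilh}, so no further comparison is called for.
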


\begin{proof}
Since the argument is similar to the proof of Lemma \ref{lem:diffh} we omit it.
\end{proof}

\begin{lemma}\label{lem:diffe}
Let $\gamma\in [0,\frac12)$ and set
\begin{align*}
\mathfrak e(\rho ,q):=\<h(\rho,\varphi+q),\varphi\>.
\end{align*}
Then, $\mathfrak e\in C^1((0,1)\times H^1_\gamma(\R,\R),\R)$.
Moreover, if $p>1/2$, we have $\mathfrak e\in C^2((0,1)\times H^1_\gamma(\R,\R),\R)$.
\end{lemma}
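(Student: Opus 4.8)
The plan is to exhibit $\mathfrak e$ as a composition of maps that the preceding lemmas already show to be $C^1$ (respectively $C^2$) between the relevant Banach spaces, so that the conclusion follows from the chain rule with no new analysis.

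First I would record the weighted-space properties of $\varphi$. By \eqref{schopSPec}, $\varphi=\varphi_1=\sqrt{1/2}\,e^{-|x|/2}$; hence $\varphi\in H^1_\gamma(\R,\R)$ precisely because $\gamma<\tfrac12$, while $\varphi\in L^2_{-\gamma}(\R)$ for every $\gamma\ge 0$ by its exponential decay. Consequently the affine translation $T\colon H^1_\gamma\to H^1_\gamma$, $T(q):=\varphi+q$, is a smooth map of Banach spaces, and by Cauchy--Schwarz
\begin{align*}
|\langle f,\varphi\rangle|\le \|e^{\gamma|x|}f\|_{L^2}\,\|e^{-\gamma|x|}\varphi\|_{L^2},
\end{align*}
so the linear functional $\ell\colon L^2_\gamma\to\R$, $\ell(f):=\langle f,\varphi\rangle$, is bounded, hence smooth.

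Next I would invoke Lemma \ref{lem:diffh}, which gives $h\in C([0,1]\times H^1_\gamma,L^2_\gamma)\cap C^1((0,1]\times H^1_\gamma,L^2_\gamma)$. Writing $\mathfrak e=\ell\circ h\circ(\mathrm{id}\times T)$ on $(0,1)\times H^1_\gamma$ then gives $\mathfrak e\in C^1$ at once, and the chain rule together with \eqref{diff:form} yields the explicit formulas $\partial_\rho\mathfrak e(\rho,q)=\langle\partial_\rho h(\rho,\varphi+q),\varphi\rangle$ and $(\partial_q\mathfrak e(\rho,q))v=\langle(\partial_q h(\rho,\varphi+q))v,\varphi\rangle$. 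If moreover $p>\tfrac12$, Lemma \ref{lem:secondh} upgrades $h$ to a $C^2$ map into $L^2_\gamma$; since $\ell$ and $T$ remain smooth, the same composition shows $\mathfrak e\in C^2((0,1)\times H^1_\gamma(\R,\R),\R)$.

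There is no genuine obstacle here: the regularity content sits entirely in Lemmas \ref{lem:diffh} and \ref{lem:secondh}, and the only points needing minor care are that $\varphi$ pairs continuously against $L^2_\gamma$ (i.e.\ $\varphi\in L^2_{-\gamma}$, immediate from exponential decay) and that translation by $\varphi$ keeps us inside $H^1_\gamma$ (which is exactly the role of the hypothesis $\gamma<\tfrac12$). The one thing I would double-check while writing is simply that Lemma \ref{lem:diffh} really furnishes $h$ as a $C^1$ map on all of $(0,1]\times H^1_\gamma$, as its statement asserts, so that the composition is legitimate without restricting to a ball.
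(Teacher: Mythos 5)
Your proposal is correct and follows essentially the same route as the paper, which also writes $\mathfrak e$ as the composition of the bounded (hence smooth) linear functional $F(h):=\<h,\varphi\>$ on $L^2_\gamma$ with the map $(\rho,q)\mapsto h(\rho,\varphi+q)$ and then cites Lemmas \ref{lem:diffh} and \ref{lem:secondh}. Your extra checks that $\varphi\in H^1_\gamma$ for $\gamma<\tfrac12$ and $\varphi\in L^2_{-\gamma}$ are exactly the implicit details the paper's one-line argument relies on.
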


\begin{proof}
Set $F\in C^\infty(L^2_\gamma;\R)$ by $F(h):=\<h,\varphi\>$.
Since $\mathfrak e(\rho ,q)=F\circ h (\rho+\varphi ,q)$, we immediately have the conclusion from Lemmas \ref{lem:diffh} and \ref{lem:secondh}.
\end{proof}

\begin{lemma}\label{lem:defPhi}
Let $\gamma\in [0,\frac12)$.
Set
\begin{align}\label{def:Phi}
\Phi(\rho ,q):=\mathfrak e(\rho ,q) (\varphi+q) - h(\rho ,\varphi+q).
\end{align}
Then, $\Phi \in C^1 ((0,1]\times P_c H^1_\gamma(\R,\R); P_cL^2_\gamma(\R,\R))$.
Moreover, if $p>1/2$, we have $\Phi \in C^2 ((0,1]\times P_c H^1_\gamma(\R,\R); P_cL^2_\gamma(\R,\R))$
\end{lemma}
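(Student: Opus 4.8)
The plan is to read off $\Phi$ as a combination of maps whose regularity has already been established in Lemmas \ref{lem:diffh}, \ref{lem:secondh} and \ref{lem:diffe}, assemble the differentiability via the chain rule and the Leibniz rule in Banach spaces, and then verify by a one--line computation that the range lies in $P_cL^2_\gamma$.

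First I would list the building blocks. Since $\gamma\in[0,\tfrac12)$ and $\varphi=\sqrt{1/2}\,e^{-|x|/2}$, we have $\varphi\in H^1_\gamma$, so the affine map $\iota\colon P_cH^1_\gamma\to H^1_\gamma$, $\iota(q)=\varphi+q$, is well defined and smooth (continuous affine). Composing with $h$, Lemma \ref{lem:diffh} yields $(\rho,q)\mapsto h(\rho,\varphi+q)\in C^1((0,1]\times P_cH^1_\gamma,L^2_\gamma)$, and if $p>1/2$ Lemma \ref{lem:secondh} upgrades this to $C^2$. Likewise $\mathfrak e(\rho,q)=F\bigl(h(\rho,\varphi+q)\bigr)$ with $F\in C^\infty(L^2_\gamma,\R)$, $F(f)=\<f,\varphi\>$, so by Lemma \ref{lem:diffe} (which is exactly this composition) $\mathfrak e\in C^1((0,1)\times P_cH^1_\gamma,\R)$, and $\mathfrak e\in C^2$ when $p>1/2$. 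Finally, scalar multiplication $m\colon\R\times H^1_\gamma\to H^1_\gamma$, $(\lambda,v)\mapsto\lambda v$, is continuous bilinear hence $C^\infty$, and the inclusion $H^1_\gamma\hookrightarrow L^2_\gamma$ is continuous linear.

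Next I would write $\Phi(\rho,q)=m\bigl(\mathfrak e(\rho,q),\varphi+q\bigr)-h(\rho,\varphi+q)$, viewed as a map into $L^2_\gamma$ (applying the embedding to the first summand, which a priori lives in $H^1_\gamma$). Each summand is a composition and product of the $C^1$ (resp.\ $C^2$) maps just listed, so the chain rule and Leibniz rule in Banach spaces give $\Phi\in C^1((0,1]\times P_cH^1_\gamma,L^2_\gamma)$, and $\Phi\in C^2$ if $p>1/2$. To land in the stated codomain, note that for $q\in P_cH^1_\gamma$ one has $\<q,\varphi\>=0$ and $\|\varphi\|_{L^2}=1$, hence $\<\varphi+q,\varphi\>=1$, so using \eqref{eq:bilf} and the definition of $\mathfrak e$,
\[
\<\Phi(\rho,q),\varphi\>=\mathfrak e(\rho,q)\<\varphi+q,\varphi\>-\<h(\rho,\varphi+q),\varphi\>=\mathfrak e(\rho,q)-\mathfrak e(\rho,q)=0 ,
\]
while $\<\Phi(\rho,q),\im\varphi\>=0$ automatically since all functions are real valued. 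Thus $\Phi(\rho,q)\in P_cL^2_\gamma$; as $P_c$ is a bounded projection, restricting the codomain of a $C^1$ (resp.\ $C^2$) map from $L^2_\gamma$ to the closed subspace $P_cL^2_\gamma$ preserves its differentiability, which is the claim.

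There is no genuine obstacle here: the argument is pure bookkeeping with the calculus of maps between Banach spaces. The only points requiring a modicum of care are that all compositions are formed in the correct function spaces — in particular that $\varphi\in H^1_\gamma$ (which is where $\gamma<\tfrac12$ enters) and that the scalar--times--function term is treated in $H^1_\gamma$ before being embedded into $L^2_\gamma$ — together with the short orthogonality computation above that pins the range down to $P_cL^2_\gamma$.
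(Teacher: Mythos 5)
Your proposal is correct and follows essentially the same route as the paper: cite Lemmas \ref{lem:diffh}, \ref{lem:secondh} and \ref{lem:diffe} for the $C^1$ (resp.\ $C^2$) regularity of the ingredients, and reduce the lemma to the orthogonality computation $\<\Phi(\rho,q),\varphi\>=\mathfrak e(\rho,q)-\<h(\rho,\varphi+q),\varphi\>=0$. You merely spell out the Banach-space bookkeeping (the affine shift by $\varphi$, the bilinear product, the embedding $H^1_\gamma\hookrightarrow L^2_\gamma$, and the normalization $\<\varphi+q,\varphi\>=1$) that the paper leaves implicit.
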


\begin{proof}
From Lemmas \ref{lem:diffh}, \ref{lem:secondh} and \ref{lem:diffe}, it suffices to show $\Phi(\rho ,q)\in P_c L^2_\gamma$.
However, from the definition of $\mathfrak e$ we obtain the following, which yields the conclusion:
\begin{align*}
\<\Phi(\rho ,q),\varphi\>=\mathfrak e(\rho,q) -\<h(\rho,\varphi+q),\varphi\>=0.
\end{align*}
\end{proof}

\begin{lemma}\label{lem:defq}
Take $\gamma_0\in (0,1/2)$ such that the conclusion of Lemma \ref{lem:R} holds.
Then there exists $\rho _0>0$ s.t.\ there exists a unique $q \in C^k( (0,\rho _0),H^1_\gamma)$ with $k=1$ and $k=2$ if $p>\frac12$ satisfying
\begin{align}\label{eq:qfix}
q(\rho )=R \Phi(\rho ,q(\rho )),
\end{align}
and
\begin{align}\label{eq:est:qfix}
\|q(\rho )\|_{H^1_\gamma}\lesssim \rho ^p,\ \|\partial_\rho q(\rho )\|_{H^1_\gamma}\lesssim \rho ^{p-1}\text{ and }|\mathfrak e(\rho,q(\rho))|\lesssim \rho ^p.
\end{align}
Moreover, if $p>1/2$ we have
\begin{align}\label{eq:est:qfix2}
\|\partial_\rho^2 q(\rho)\|_{H^1_\gamma}\lesssim \rho^{p-2}.
\end{align}
\end{lemma}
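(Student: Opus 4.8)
The plan is to apply the contraction mapping principle (equivalently, a quantitative implicit function theorem) to the fixed-point equation \eqref{eq:qfix}. For a fixed small $\rho$, define the map $T_\rho(q):=R\,\Phi(\rho,q)$ on the complete metric space $B_\rho:=\{q\in P_cH^1_\gamma \,:\, \|q\|_{H^1_\gamma}\le M\rho^p\}$, for a constant $M$ to be chosen. First I would check that $T_\rho$ maps $B_\rho$ into itself: by Lemma \ref{lem:R}, $\|T_\rho(q)\|_{H^1_\gamma}\lesssim \|\Phi(\rho,q)\|_{L^2_\gamma}$, and from the definition \eqref{def:Phi} of $\Phi$ together with the estimate \eqref{est:l2h} (and the analogous bound on $\mathfrak e$), one gets $\|\Phi(\rho,q)\|_{L^2_\gamma}\lesssim \rho^p(1+\|q\|_{H^1_\gamma})^{2p+1}\lesssim \rho^p$ on $B_\rho$, which closes the self-mapping property once $\rho_0$ is small enough relative to $M$. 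Second, I would verify the contraction property: since $h$ is $C^1$ in $q$ with derivative controlled (via \eqref{diff:form} and the pointwise bound on $\partial_\mu\tilde h$ in \eqref{est:diftilh}) by $\rho^p\|q\|_{L^\infty}^{2p}\lesssim \rho^p$ on $B_\rho$, the map $\Phi(\rho,\cdot)$ is Lipschitz with constant $\lesssim \rho^p$, hence $T_\rho$ is a contraction on $B_\rho$ for $\rho_0$ small. This produces a unique $q(\rho)\in B_\rho$ solving \eqref{eq:qfix}, giving the first estimate in \eqref{eq:est:qfix}, and then $|\mathfrak e(\rho,q(\rho))|\lesssim \rho^p$ follows directly from \eqref{est:l2h}.

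For the regularity in $\rho$ and the derivative estimates, I would differentiate \eqref{eq:qfix} formally: $\partial_\rho q = R(\partial_\rho\Phi(\rho,q) + \partial_q\Phi(\rho,q)\partial_\rho q)$, i.e. $(I - R\,\partial_q\Phi(\rho,q))\partial_\rho q = R\,\partial_\rho\Phi(\rho,q)$. Since $\|R\,\partial_q\Phi(\rho,q)\|_{\mathcal L(H^1_\gamma)}\lesssim \rho^p<1$ for small $\rho$, the operator $I-R\,\partial_q\Phi$ is invertible with uniformly bounded inverse; and $\|\partial_\rho\Phi(\rho,q)\|_{L^2_\gamma}\lesssim \rho^{p-1}$ from the bounds on $\partial_\rho\tilde h$ in \eqref{est:diftilh}. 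This gives $\|\partial_\rho q(\rho)\|_{H^1_\gamma}\lesssim \rho^{p-1}$, and the $C^1$ regularity is then justified rigorously by the implicit function theorem applied to $(\rho,q)\mapsto q - R\Phi(\rho,q)$, whose partial derivative in $q$ at the solution is the invertible operator $I-R\partial_q\Phi$, using that $\Phi\in C^1$ by Lemma \ref{lem:defPhi}.

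If $p>1/2$, Lemma \ref{lem:defPhi} gives $\Phi\in C^2$, so the implicit function theorem upgrades $q$ to $C^2$, and differentiating the relation for $\partial_\rho q$ once more yields an equation of the form $(I-R\partial_q\Phi)\partial_\rho^2 q = R\,(\partial_\rho^2\Phi + 2\partial_\rho\partial_q\Phi\,\partial_\rho q + \partial_q^2\Phi(\partial_\rho q,\partial_\rho q))$. Estimating the right-hand side term by term using \eqref{est:twicedifftildh} — the worst contribution being $\|\partial_\rho^2\Phi\|_{L^2_\gamma}\lesssim \rho^{p-2}$, which dominates $\rho^{p-1}\cdot\rho^{p-1}$ and $\rho^p\cdot\rho^{2(p-1)}$ in the relevant range — gives $\|\partial_\rho^2 q(\rho)\|_{H^1_\gamma}\lesssim \rho^{p-2}$, which is \eqref{eq:est:qfix2}. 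The main obstacle, and the place demanding the most care, is the bookkeeping of the weighted-space mapping properties at the non-smooth point $q=0$ of $\tilde h$: one must consistently restrict the pointwise second- and third-derivative bounds to $\{x:q(x)\neq 0\}$ as in the proof of Lemma \ref{lem:diffh}, and check that the resulting $L^2_\gamma$ estimates (with the $\rho^{p-2}$, $\rho^{p-3}$ singularities in $\rho$) remain integrable/finite for the fixed small but positive values of $\rho$ under consideration — here the restriction $\gamma<1/2$ enters to control $e^{\gamma|x|}\varphi$ — so that all the formal differentiations above are legitimate.
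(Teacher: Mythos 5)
Your proposal is correct and follows essentially the same route as the paper: a contraction mapping argument for the fixed point equation $q=R\Phi(\rho,q)$ using the Lipschitz bound $\lesssim\rho^p$ from Lemmas \ref{lem:R} and \ref{lem:diffh}, the Implicit Function Theorem applied to $F(\rho,q)=q-R\Phi(\rho,q)$ for the $C^1$ (and $C^2$ when $p>1/2$) regularity, and the Neumann-series invertibility of $\mathrm{Id}-R\partial_q\Phi$ to extract the derivative estimates. The only cosmetic difference is that you contract on a ball of radius $M\rho^p$ while the paper works on the closed unit ball and recovers $\|q(\rho)\|_{H^1_\gamma}\lesssim\rho^p$ a posteriori from the fixed point equation; both are fine.
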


\begin{proof}
By Lemma \ref{lem:R} and Lemma \ref{lem:diffh}, we have
\begin{align*}
&\| R\Phi(\rho ,q_1)-R\Phi(\rho ,q_2)\|_{H^1_\gamma}\lesssim \| \Phi(\rho ,q_1)-\Phi(\rho ,q_2)\|_{L^2_\gamma}\lesssim \| h(\rho ,\varphi+q_1)-h(\rho ,\varphi+q_2)\|_{L^2_\gamma}\\&\leq \int_0^1 \|\partial_q h(\rho,\varphi+q_2+\tau(q_1-q_2))\|_{L^\infty}\,d\tau  \| q_1-q_2 \|_{L^2_\gamma}\lesssim \rho ^p \|q_1-q_2\|_{H^1_\gamma},
\end{align*}
for $q_1,q_2\in \overline{D_{H^1_\gamma}(0,1)}$.
Therefore, there exists $\rho _0>0$ s.t.\
\begin{align*}
\| R\Phi(\rho ,q_1)-R\Phi(\rho ,q_2)\|_{H^1_\gamma(\R,\R)}\leq \frac12 \|q_1-q_2\|_{H^1_\gamma},
\end{align*}
for all $\rho \in (0,\rho _0)$ and $q_1,q_2\in \overline{D_{H^1_\gamma(\R,\R)}(0,1)}$.
Thus, by contraction mapping principle, there exists a unique $q\in \overline{D_{H^1_\gamma(\R,\R)}(0,1)}$ satisfying \eqref{eq:qfix}.
We call $q(\rho)$   the fixed point of $R\Phi(\rho,\cdot)$ and set
\begin{align*}
F(\rho,q):=q-R\Phi(\rho,q).
\end{align*}
Since one can show $\left.\partial_q F\right|_{(\rho,q)=(\rho,q(q))}$ is invertible by using the estimate we have prepared, by the Implicit Function Theorem  and by Lemma \ref{lem:defPhi} we have $q \in C^k( (0,\rho _0),H^1_\gamma)$ with $k=1$ and $k=2$ if $p>\frac12$.

We now prove \eqref{eq:est:qfix}.
First, by the fact that $q(\rho)$ is the fixed point of $R\Phi(\rho,\cdot)$, Lemma \eqref{lem:R} and \eqref{est:tilh} with the definition of $h$, $\Phi$, we have
\begin{align*}
\|q(\rho)\|_{H^1_\gamma}=\|R\Phi(\rho)\|_{H^1_\gamma}
\lesssim \|\Phi(\rho)\|_{L^2_\gamma}\lesssim \|h(\rho,\varphi+q(\rho))\|_{L^2_\gamma}\lesssim \rho^p.
\end{align*}
Next, by the definition of $\mathfrak e$, we have
\begin{align*}
|\mathfrak e(\rho,q(\rho))|\leq \|h(\rho,\varphi+q(\rho))\|_{L^2}\lesssim \rho^p.
\end{align*}
Finally, since
\begin{align*}
\partial_\rho q = R\partial_q \Phi \partial_\rho q +R\partial_\rho \Phi,
\end{align*}
and by the above argument, for sufficietly small $\rho>0$, we have $\|(\mathrm{Id}-R\partial_q \Phi)^{-1}\|_{H^1_\gamma\to H^1_\gamma}\leq 2$, we have the 2nd estimate of \eqref{eq:est:qfix} by
\begin{align*}
\|\partial_\rho q\|_{H^1_\gamma}=\| (\mathrm{Id}-R\partial_q \Phi)^{-1} R \partial_q \Phi\|_{H^1_\gamma}\lesssim \|\partial_q \Phi \|_{L^2_\gamma}\lesssim \|\partial_q h\|_{L^2_\gamma}\lesssim \rho^{p-1}.
\end{align*}
The estimate \eqref{eq:est:qfix2} can be proved similarly.
\end{proof}

\begin{proof}[Proof of Proposition \ref{prop:bddst}]
Set $a_0=\rho_0^{\frac12}$ where $\rho_0>0$ is given in Lemma \ref{lem:defq}.
Set
\begin{align*}
Q[z]:=z(\varphi + q(|z|^2))\text{ and }E(|z|^2):=-\frac14 + \mathfrak e(|z|^2,q(|z|^2)),
\end{align*}
where $q \in C^1((0,a_0^2),H^1_\gamma(\R,\R))$ is given in Lemma \ref{lem:defq}.
Then, \eqref{Q:gauge} and \eqref{eq:sp} are immediate from the definition of $Q$, $q$ and $\mathfrak e$.
The first and third inequality of \eqref{prop:bddst1} follow  from \eqref{eq:est:qfix}.

By Lemma \ref{lem:defq}, we have $Q\in C(D_\C(0,a_0),H^1_\gamma)\cap C^k(D_\C(0,a_0)\setminus \{0\},H^1_\gamma)$ for $k=1$ and $k=2$ for $p>\frac12$.
However, since
\begin{align*}
\| D_jQ[z]-\im^{j-1}\varphi\|_{H^1_\gamma}=\| \im^{j-1}q(|z|^2) + 2 q'(|z|^2)zz_j\|_{H^1_\gamma}\lesssim |z|^{2p},\ j=1,2,
\end{align*}
we see that $Q[z]$ is also continuously differentiable at $z=0$.
Here, we have set $z=z_1+\im z_2$ for $z_1,z_2\in\R$.
Similarly, if $p>1/2$, we see that $Q[z]$ is twice continuously differentiable at the origin  and satisfying the estimate \eqref{prop:bddst4}.
This finishes the proof.
\end{proof}

\section*{Acknowledgments} S.C. was supported by a grant FRA 2018 from the University of Trieste.
M.M. was supported by the JSPS KAKENHI Grant Number 19K03579, JP17H02851 and JP17H02853..

Department of Mathematics and Geosciences,  University
of Trieste, via Valerio  12/1  Trieste, 34127  Italy.
{\it E-mail Address}: {\tt scuccagna@units.it}

Department of Mathematics and Informatics,
Faculty of Science,
Chiba University,
Chiba 263-8522, Japan.
{\it E-mail Address}: {\tt maeda@math.s.chiba-u.ac.jp}

\end{document}